\documentclass[a4paper,11pt]{article}

\usepackage[utf8]{inputenc}
\usepackage[T1]{fontenc}
\usepackage{graphicx}
\usepackage{cancel}
\usepackage{amsmath,amsfonts,amssymb,amsthm}
\usepackage{mathtools}
\usepackage{mathrsfs}
\usepackage{xcolor}
\usepackage{hyperref}
\usepackage{booktabs}
\usepackage{bbold}
\usepackage{bm}
\usepackage{color}
\usepackage[font=footnotesize]{caption}
\usepackage{enumitem}
\usepackage{empheq}
\usepackage{framed}
\usepackage{fullpage}
\usepackage{hyperref}
\usepackage{soul}
\usepackage{dirtytalk}
\usepackage{pstricks}
\usepackage{appendix}
\usepackage{dsfont}
\usepackage{tcolorbox}
\usepackage{bbm}

\mathtoolsset{showonlyrefs}

\definecolor{myred}{HTML}{880000}
\definecolor{mygreen}{HTML}{008800}
\definecolor{myblue}{HTML}{000088}
\definecolor{linkblue}{HTML}{0000BB}

\hypersetup{
  colorlinks,
  linkcolor={myred},
  citecolor={myblue},
  urlcolor={linkblue}
}

\newcommand{\E}{{\mathbb E}}

\newcommand{\Var}{\operatorname{Var}}

\renewcommand{\leq}{\leqslant}
\renewcommand{\epsilon}{\varepsilon}

\renewcommand{\le}{\leqslant}
\renewcommand{\ge}{\geqslant}

\newcommand{\argmin}{\mathop{\mathrm{argmin}}}

\DeclareMathOperator{\tr}{Tr}

\newcommand{\var}{\Var}

\newcommand{\ind}[1]{\mathbbm 1 \{ #1 \}}

\renewcommand{\Pr}{\mathbb{P}}

\renewcommand{\top}{\mathsf{T}}
\DeclareMathOperator\supp{supp}

\usepackage{xspace}

\newtheorem{proposition}{Proposition}
\newtheorem{theorem}{Theorem}
\newtheorem*{theorem*}{Theorem}
\newtheorem{lemma}{Lemma}
\newtheorem{corollary}{Corollary}

\theoremstyle{definition}
\newtheorem{definition}{Definition}

\newtheorem{example}{Example}

\theoremstyle{remark}
\newtheorem{remark}{Remark}

\title{Covariance Estimation: Optimal Dimension-free Guarantees for Adversarial Corruption and Heavy Tails}
\author{
  Pedro Abdalla
  \qquad
  Nikita Zhivotovskiy\thanks{Department of Mathematics, ETH Z\"{u}rich, Switzerland, \href{mailto:pedro.abdallateixeira@math.ethz.ch}{pedro.abdallateixeira@math.ethz.ch},  \href{mailto:nikita.zhivotovskii@math.ethz.ch}{nikita.zhivotovskii@math.ethz.ch}}
}

\begin{document}

\maketitle

\begin{abstract}
We provide an estimator of the covariance matrix that achieves the optimal rate of convergence (up to constant factors) in the operator norm under two standard notions of data contamination: We allow the adversary to corrupt an $\eta$-fraction of the sample arbitrarily, while the distribution of the remaining data points only satisfies that the $L_{p}$-marginal moment with some $p \ge 4$ is equivalent to the corresponding $L_2$-marginal moment. Despite requiring the existence of only a few moments of the distribution, our estimator achieves the same tail estimates as if the underlying distribution were Gaussian. As a part of our analysis, we prove a non-asymptotic, dimension-free Bai-Yin type theorem in the regime $p > 4$.
\end{abstract}

\section{Introduction}

Estimation of the covariance matrix is a classic topic. In high-dimensional statistics the role of sample covariance matrices is central to Principal Component Analysis (PCA) and to linear least squares. Most of the existing work  focuses on estimation of covariance matrices under different structural assumptions allowing minimax estimation in the high-dimensional setup. We refer to the line of work \cite{bickel2008covariance, bickel2008regularized, lam2009sparsistency, el2008operator, cai2010optimal, cai2013optimal} and the recent surveys \cite{cai2016estimating, el2018random}.  

A line of research on non-asymptotic guarantees for sample covariance matrices was initiated by a question of Kannan, Lov{\'a}sz and Simonovits \cite{kannan1997random} on the computation of the volume of a convex body. For the class of log-concave measures the optimal $\sqrt{\frac{d}{N}}$ ($d$ is the dimension and $N$ is the sample size) rate of convergence in the operator norm was first obtained in the renowned work of Adamczak, Litvak, Pajor and Tomczak-Jaegermann \cite{adamczak2010quantitative}. Since then several authors focused on making less assumptions on the underlying distribution \cite{srivastava2013covariance, mendelson2012generic, mendelson2014singular, guedon2017interval}. The best known result in this direction is due to K. Tikhomirov \cite{tikhomirov2018sample} who proved the optimal rate of convergence $\sqrt{\frac{d}{N}}$ for the sample covariance matrix assuming only the existence of $p > 4$ moments. 

However, as discussed by Chen, Gao and Ren \cite{chen2018robust}, if 
there exists only one outlier in the whole sample, the statistical performance of the sample covariance matrix can be compromised. Thus, one is interested in estimators of the covariance matrix robust to adversarial contamination of the data \cite{cheng2019faster, minsker2022robust}. For a standard perspective on robustness,
building on the ideas of contaminated models, influence functions and breakdown points, we refer to the monographs \cite{hampel1971general, huber1981robust, rousseeuw1987robust}.

On the other hand, there is a growing interest in getting dimension-free guarantees for estimating the covariance matrix. Given a covariance matrix $\Sigma$, the \emph{effective rank} (see \cite{vershynin_2012}) of $\Sigma$ is defined as
\[
\mathbf{r}(\Sigma) = \frac{\tr(\Sigma)}{\|\Sigma\|},
\]
where, for the rest of the paper, $\|\cdot\|$ denotes the operator norm of the matrix and the Euclidean norm of the vector. Koltchinskii and Lounici \cite{koltchinskii2017operators} proved the optimal high probability bound for the sample covariance matrix in the Gaussian case that depends on the effective rank rather than the dimension. Since then, their result was recovered and extended multiple times and via different techniques \cite{van2017structured,Vershynin2016HDP, zhivotovskiy2021dimension, koltchinskii2020asymptotically}.

Finally, we mention the recent interest in getting the so-called \emph{sub-Gaussian estimators} when the data is heavy-tailed. This direction was initiated by  O. Catoni in \cite{catoni2012challenging}, where the sub-Gaussian estimation of the mean of a random variable is considered. To explain informally, one
aims to construct statistical estimators performing as good as the sample mean does for the Gaussian distribution, while making as weak assumptions on the distribution as possible. For a recent survey with focus on multivariate mean estimation, we refer to \cite{lugosi2019mean}. The central ideas behind the robust mean estimation found their applications in many related
problems such as regression \cite{hsu2016heavy, brownlees2015empirical, lugosi2019risk, chinot2019robust, mendelson2019unrestricted, mourtada2022distribution}, covariance estimation \cite{catoni2016pac, catoni2017dimension, mendelson2020robust, ostrovskii2019affine, depersin2020robust, minsker2022robust},
and clustering \cite{klochkov2021robust}. For related results in the context of covariance estimation for heavy-tailed distributions, we refer to the recent survey \cite{ke2019user}.

Our goal is to provide an estimator that simultaneously achieves all the properties described above:
\begin{itemize}
\item We allow the adversarial contamination and recover the optimal dependence on the contamination level based on the number of moments of the underlying distribution.
\item Our bounds do not contain unnecessary logarithmic factors and the convergence rates coincide with the classical asymptotic result of Bai and Yin \cite{baiyin1994} provided that there are at least four moments of the distribution.
\item The convergence rates scale with the effective rank $\mathbf{r}(\Sigma)$ rather than the dimension $d$.
\item We allow the distributions satisfying certain weak norm equivalence assumptions instead of more restrictive Gaussian/log-concave assumptions appearing in the literature. At the same time, we provide the same high probability bounds as if the data were Gaussian. 
\end{itemize}

We begin with the following definition. We say that the distribution of a zero mean random vector $X$ satisfies the $L_p-L_2$ \emph{norm equivalence} (\emph{hypercontractivity}), if for all $v \in \mathbb{R}^d$ and $2 \le q \le p$,
\begin{equation}
\label{eq:momeqv}
(\E|\langle X, v\rangle|^{q})^{1/q} \le \kappa(q)(\E|\langle X, v\rangle|^{2})^{1/2},
\end{equation}
where $\kappa(\cdot)$ is a function of $q$ and $\langle\cdot, \cdot\rangle$ denotes the inner product. Without loss of generality, we assume that $\kappa$ is a non-decreasing function. We say that the sample $\widetilde X_1, \ldots, \widetilde X_N$ is $\eta$-\emph{corrupted} if it is obtained from the sample $X_1, \ldots X_N$ of independent copies of $X$ by replacing at most $\eta N$ points by arbitrary vectors that might depend on $X_{1}, \ldots, X_N$. This corruption model is described in detail \cite{lugosi2021robust} and captures the standard setups in robust statistics such as the Huber contamination model \cite{huber1964}. This model is sometimes called the model of $\eta$-\emph{corruption} or the \emph{strong contamination model} \cite{diakonikolas2019recent}. We discuss this model in more detail in Section \ref{sec:estimtraceandopernorm}.
We present a simplified version of our main result.
\begin{theorem}[Simplified]
\label{thm:informalmain}
Assume that $X$ is a zero mean random vector in $\mathbb{R}^d$ with covariance $\Sigma$ satisfying $L_p-L_2$ \emph{norm equivalence} with $p \ge 4$. Fix the corruption level $\eta \in [0, 1]$ and the confidence level $\delta \in (0, 1)$. Assume that $\widetilde{X}_{1}, \ldots, \widetilde{X}_N$ is an $\eta$-corrupted sample. There is an estimator $\widehat{\Sigma}_{\eta, \delta} = \widehat{\Sigma}_{\eta, \delta}(\widetilde{X}_{1}, \ldots, \widetilde{X}_N)$ depending on $\eta, \delta$ such that if $N \ge c(p)(\mathbf{r}(\Sigma) + \log(1/\delta))$, then with probability at least $1 - \delta$, it holds that
\[
\left\|\widehat{\Sigma}_{\eta, \delta} - \Sigma\right\| \le C\|\Sigma\|\left(\sqrt{\frac{\mathbf{r}(\Sigma) + \log(1/\delta)}{N}} + \kappa(p)^2\eta^{1-2/p}\right),
\]
where $C > 0$ is an absolute constant that depends only on the value $\kappa(4)$ and $c(p)$ depends only on $p$ and $\kappa(p)$. Moreover, under these assumptions, no estimator can perform better (up to multiplicative constant factors).
\end{theorem}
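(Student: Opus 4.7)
The plan is to reduce the operator-norm problem to simultaneously estimating the family of quadratic forms $v \mapsto \ip{\Sigma v}{v} = \E\langle X, v\rangle^{2}$ from the corrupted sample, and then to combine a robust one-dimensional estimator of these quadratic forms with a Frank-Wolfe-type matrix aggregation and a dimension-free PAC-Bayesian argument that replaces the ambient dimension $d$ by the effective rank $\mathbf r(\Sigma)$.

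For the directional estimator, I would apply a quantile-of-means estimator $\qom$ (equivalently a trimmed mean) to the scalar sample $Y_i(v) = \langle \widetilde X_i, v\rangle^{2}$ to produce a candidate $T_v$ for $\ip{\Sigma v}{v}$. Under the $L_p$-$L_2$ equivalence with $p \ge 4$, the variance of $Y_i(v)$ is bounded by $(\kappa(4)^{4} - 1)\ip{\Sigma v}{v}^{2}$, so for uncorrupted data the classical sub-Gaussian concentration of $\qom$ yields deviations of order $\ip{\Sigma v}{v}\sqrt{\log(1/\delta)/N}$ with probability $1-\delta$. The quantile/trimming structure caps the adversary's leverage on $T_v$ by the mean of the $\eta N$ largest clean values of $|\langle X, v\rangle|^{2}$; a H\"older step using the $L_p$-moment bound converts this into a corruption bias of order $\kappa(p)^{2}\ip{\Sigma v}{v}\,\eta^{1-2/p}$, which is the sharp exponent matching the lower bound.

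To upgrade the pointwise control of $T_v$ to a uniform bound over $S^{d-1}$ without incurring a factor $\sqrt d$, I would use a PAC-Bayesian smoothing argument with a Gaussian prior of covariance proportional to $\Sigma$, in the spirit of the dimension-free analyses of the sample covariance matrix. This replaces the metric complexity of $S^{d-1}$ by $\sqrt{\mathbf r(\Sigma)}$, since the relevant random process is naturally indexed by $\Sigma^{1/2}v$ and its $L_2$-diameter is controlled by $\|\Sigma\|\sqrt{\mathbf r(\Sigma)/N}$. Given this uniform control, I would define $\widehat\Sigma_{\eta,\delta}$ as any PSD matrix $A$ of operator norm at most a constant multiple of $\|\Sigma\|$ that approximately minimizes $\sup_{v \in S^{d-1}}\bigl|\ip{Av}{v} - T_v\bigr|$, in the spirit of a Frank-Wolfe or tournament construction; since $\Sigma$ itself is nearly feasible, the triangle inequality transfers the uniform estimate to $\|\widehat\Sigma_{\eta,\delta} - \Sigma\|$. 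The matching lower bound follows from the Gaussian minimax rate of Koltchinskii--Lounici (giving the $\sqrt{(\mathbf r(\Sigma) + \log(1/\delta))/N}$ term) together with a two-point Le Cam construction on a $p$-moment family in the contamination model (giving the $\eta^{1-2/p}$ term).

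The main obstacle I expect is the tight coupling of the $\qom$ tuning parameters with the corruption level: the number of blocks (or trimming quantile) must simultaneously produce the sub-Gaussian rate and the sharp $\eta^{1-2/p}$ corruption exponent, while the implicit trimming threshold must be chosen uniformly in the direction $v$ so as not to spoil the PAC-Bayesian step. Establishing that the uniform envelope of the trimmed squared marginals is still controlled by $\kappa(p)^{2}\|\Sigma\|$, and not by the much larger $\kappa(p)^{2}\tr(\Sigma)$, is the technically sensitive point of the argument, and is essentially where the dimension-free Bai--Yin theorem advertised in the abstract must come into play.
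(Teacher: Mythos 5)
Your high-level ingredients---a directional trimmed mean/quantile estimator, a PAC-Bayesian device for dimension-free uniform control, a Frank--Wolfe/minimax selection of the PSD matrix, and a reduction of the lower bound to one-dimensional mean estimation---are the right building blocks, and you correctly flag the critical difficulty (controlling the uniform envelope of the trimmed squared marginals by $\kappa(p)^{2}\|\Sigma\|$ rather than by $\tr(\Sigma)$, which is exactly where the dimension-free Bai--Yin theorem enters). However, the way you propose to wire them together has a real gap.

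The paper handles $p=4$ and $p>4$ by genuinely different arguments, and your proposal conflates them. For $p=4$, a \emph{direction-independent} truncation $\psi(\lambda\langle\wt X_i,v\rangle^2)$ with a single tuned scalar $\lambda$ is enough; here PAC-Bayes with isotropic Gaussian priors $N(0,\beta^{-1}I_d)$ (not covariance proportional to $\Sigma$---with a $\Sigma$-shaped prior the KL term becomes $v^\top\Sigma^{-1}v/(2\alpha)$, which is not controlled by $\mathbf r(\Sigma)$) gives $\sqrt{(\mathbf r(\Sigma)+\log(1/\delta))/N}$, and the adversary's contribution is a trivial $\eta/\lambda$, optimized to $\sqrt\eta$. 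For $p>4$, the sharp $\eta^{1-2/p}$ requires a \emph{direction-dependent} trimming level $\lambda_v(Q)=(q_v+Q)^{-1}$ built from a per-direction empirical quantile $q_v$; this quantity is not a smooth functional of $v$, and a PAC-Bayesian smoothing over $v$ no longer applies to it cleanly. The paper therefore abandons PAC-Bayes at that point: it controls the uniform deviations of the truncated quadratic process by symmetrization and contraction reducing to $\E\sup_v|\frac1N\sum\eps_i\langle X_i,v\rangle^2|$, bounds that by the Bai--Yin-type Corollary, and upgrades to high probability via Talagrand's inequality; the number of per-direction exceedances is controlled by a separate small-ball lemma, and the unknown scale $Q$ is selected by a Lepskii step, with $\|\Sigma\|$, $\tr(\Sigma)$, and $q_v$ estimated from an independent split of the sample. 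PAC-Bayes survives only in two places: inside the proof of the Bai--Yin theorem (the ``spread part''), and in the $p=4$ estimator. So the step ``use PAC-Bayes to upgrade the pointwise quantile-of-means bound to a uniform bound over $S^{d-1}$'' is precisely the step that does not go through in the $p>4$ regime and has to be replaced by the Bai--Yin + Talagrand machinery you mention only in passing.

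Two smaller points: the directional QOM bias argument needs the variance proxy to be $\|\Sigma\|$, not $\sup_v\ip{\Sigma v}{v}=\|\Sigma\|$ pointwise but $\|\Sigma\|\sqrt{\mathbf r(\Sigma)/N}$ uniformly, and that uniform term comes from the Bai--Yin bound rather than from the QOM concentration itself. And for the lower bound, the paper's contamination term comes from an explicit one-dimensional construction (a four-point distribution for $p=4$, and a truncated-Gaussian construction for the sub-Gaussian case) rather than a generic Le Cam two-point argument; your route is plausible but is not the one taken here.
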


We show that the term $\kappa(p)^2\eta^{1-2/p}$ scales as $\sqrt{\eta}$ when $p = 4$ and as $\eta\log(1/\eta)$ for sub-Gaussian distributions. We also show that both rates cannot be improved. In particular, in the special case of variance estimation, which corresponds to $d = 1$, our bound matches the recent rates of Comminges, Collier, Ndaoud, and Tsybakov \cite[Table 1]{comminges2021adaptive}, whose analysis covers a less general contamination model called the sparse vector model and only in-expectation bounds. A detailed version of the Theorem \ref{thm:informalmain} with the corresponding explicit estimators is stated in Theorem \ref{thm:thecasepfour} in the regime $p = 4$ and in Theorem \ref{thm:thecasep_greater_four} in the regime $p > 4$ respectively. For a detailed discussion on the optimality of our result, we refer to Section \ref{sec:optimality}. Since our estimator is robust to both heavy tails and adversarial corruption, it should necessarily differ from the sample covariance matrix. However, it still has a relatively simple form. The estimator depends on a specifically tuned scalar $\lambda > 0$, which in turn depends on some parameters including $\delta$ and $\eta$ (the details are postponed to Theorem \ref{thm:thecasepfour} and Theorem \ref{thm:thecasep_greater_four}). We define the truncation function
\begin{equation}
\label{eq:truncfunction}
\psi(x) = 
    \begin{cases}
      x,\quad &\textrm{for}\; x \in [-1, 1],
      \\
      \operatorname{sign}(x),\quad &\textrm{for}\; |x| > 1.
    \end{cases}
\end{equation}
Then, when $p = 4$, our estimator has the following form
\begin{equation}
\label{eq:minmaxestimator}
\widehat{\Sigma}_{\eta, \delta}= \argmin\limits_{\widehat{\Sigma} \in \mathbb{S}_{+}^d}\sup\limits_{v \in S^{d - 1}}\left|\frac{1}{\lambda N}\sum\limits_{i = 1}^N\psi(\lambda\langle \widetilde X_i, v\rangle^2) - v^{\top}\widehat{\Sigma} v \right|,
\end{equation}
where $\mathbb{S}_{+}^d$ is the set of $d$ by $d$ positive semi-definite matrices and $S^{d - 1}$ denotes the unit sphere in $\mathbb{R}^d$. A simple observation is that, if $\lambda \to 0$, our estimator coincides with the sample covariance matrix $\frac{1}{N}\sum\nolimits_{i = 1}^N\widetilde X_i\otimes \widetilde X_i$. For a fixed value of $\lambda$, our estimator can be seen as the second order extension of the classical trimmed mean estimator \cite{tukey1963less, bickel1965some, stigler1973asymptotic, lugosi2021robust}. We are aiming to remove a fraction of extreme observations and then average over the remaining sample. The only difference is that, in the matrix case, we need to take into account all possible directions in the unit sphere. The result of Theorem \ref{thm:informalmain} has multiple advantages over the best known results in the literature: 
\begin{itemize}
    \item  \textbf{Adversarial corruption:} One of the well known results in the adversarial corruption setup is due to Chen, Gao and Ren \cite{chen2018robust}. They analyze a version of Tukey's median under the restrictive assumption of elliptical distributions satisfying certain growth conditions. Moreover, their results are dimension dependent. See \cite{minsker2022robust} for some recent extensions.
    
    \item  \textbf{Heavy-tails:} The best-known result in the literature in the heavy-tailed setup (the only assumption is $L_4-L_2$ hypercontractivity) is due to S. Mendelson and the second-named author of this paper \cite{mendelson2020robust}. However, their results can be further improved. First, there is an additional logarithmic factor $\log \mathbf{r}(\Sigma)$ due to the application of the non-commutative Bernstein inequality in the analysis\footnote{We note that under the $L_4-L_2$ norm equivalence the logarithmic factor $\log \mathbf{r}(\Sigma)$ can be removed as implicitly follows from previous arguments. In particular, without adversarial contamination the technique of Catoni and Giulini \cite[Proposition 4.1]{catoni2017dimension} can be adapted to recover the desired rate (see also the discussion in \cite{giulini2017robust} and the bounds in \cite{giulini2018robust} where at least a $\log\log N$ term appears); their estimator involves some explicit Gaussian integrals in the parameter space and will depend on additional parameters of the distribution such as $\tr(\Sigma)$ and $\|\Sigma\|$.
    
    Shortly after the first version of this paper was made public, the authors were notified that Z. Fern\'andez-Rico and R. I. Oliveira achieved the same bound (only for $p = 4$) without the logarithmic factor in the setup without any contamination. Their result appeared later in the PhD thesis of Z. Rico \cite{FernandezRico2022} and in a preprint \cite{oliveira2022improved}.}. Secondly, since their estimator is based on Median-of-Means, it is currently not known how to significantly improve the dependence on $\eta$ beyond $\sqrt{\eta}$ in the bound of Theorem \ref{thm:informalmain}.  
\end{itemize}

The most involved step is to get the correct dependence on the corruption level $\eta$ when $p > 4$ in \eqref{eq:momeqv}. 
One of the technical results used in this paper is a dimension-free version of the classical Bai-Yin theorem \cite{baiyin1994}. In a nutshell, the result of Bai and Yin implies that if $X_1, \ldots, X_N$ are independent copies of a zero mean random vector $X$ in $\mathbb{R}^d$ with unit covariance and independent identically distributed coordinates having four bounded moments, that is $p = 4$, then 
\[
\left\|\frac{1}{N}\sum\limits_{i = 1}^NX_i\otimes X_i - I_d\right\| \to 2\sqrt{\frac{d}{N}} + \frac{d}{N}
\]
almost surely as $d, N \to \infty$ so that $d/N \to \beta \in (0, 1]$. Moreover, if the distribution only has $p < 4$ moments, no convergence at such a rate is possible. Our result is non-asymptotic and is somewhat stronger, since we do not require that the coordinates of $X_i$ are independent. Moreover, our focus is on the dimension-free bound. At the same time, we require that $p > 4$ and the constant in the bound depends on how well $p - 4$ is separated from zero. 
\begin{theorem}[A non-asymptotic, dimension-free Bai-Yin type theorem]
\label{thm:baiyin}
Assume that $Y$ is a zero mean random vector in $\mathbb{R}^d$ with covariance $\Sigma$ satisfying $L_p-L_2$ \emph{norm equivalence} with $p > 4$. Let $Y_1, \ldots, Y_N$ be a sample of independent copies of $Y$. Consider the truncated vectors $X_i = Y_i\ind{\|Y_i\| \le (N\tr(\Sigma)\|\Sigma\|)^{1/4}}$ for $i = 1, \ldots, N$. If $N  \ge c(p)\mathbf{r}(\Sigma)$, then it holds that
\[
\E\left\|\frac{1}{N}\sum\limits_{i = 1}^N \left(X_i \otimes X_i - \E X_i\otimes X_i\right)\right\| \le C(p)\|\Sigma\|\sqrt{\frac{\mathbf{r}(\Sigma)}{N}},
\]
where $c(\cdot)$ and $C(\cdot)$ are non-increasing and both satisfy $C(p), c(p) \to \infty$ as $p \to 4$.
\end{theorem}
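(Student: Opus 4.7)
The plan is to reduce the operator-norm bound to a quadratic Rademacher process on the sphere $S^{d-1}$, and then control that process by generic chaining with respect to a $\Sigma$-dependent metric, exploiting the $p>4$ hypercontractivity and the truncation envelope to avoid the $\log\mathbf{r}(\Sigma)$ factor inherent in matrix Bernstein.

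Set $Z = \frac{1}{N}\sum_{i=1}^N (X_i \otimes X_i - \E X_i \otimes X_i)$. Since $\|Z\| = \sup_{v \in S^{d-1}} |v^\top Z v|$, standard symmetrization reduces the problem (up to universal constants) to bounding
\[
\E \sup_{v \in S^{d-1}} \left| \frac{1}{N}\sum_{i=1}^N \epsilon_i \langle X_i, v\rangle^2 \right|,
\]
where $\epsilon_i$ are independent Rademacher signs. For a single direction $v$, hypercontractivity yields the variance estimate $\E\langle X, v\rangle^4 \le \kappa(4)^4 (v^\top \Sigma v)^2 \le \kappa(4)^4 \|\Sigma\|^2$, while the truncation provides the envelope $\langle X_i, v\rangle^2 \le R^2 = \|\Sigma\|\sqrt{N\mathbf{r}(\Sigma)}$. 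A scalar Bernstein bound at the target scale $t \asymp \|\Sigma\|\sqrt{\mathbf{r}(\Sigma)/N}$ then gives the desired per-direction rate; the truncation level $R = (N\tr(\Sigma)\|\Sigma\|)^{1/4}$ is chosen precisely so that the sub-exponential correction $R^2 t/N$ balances the sub-Gaussian part at this scale.

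To uniformize in $v$ without introducing a $\log\mathbf{r}(\Sigma)$ factor, I would apply generic chaining to the conditional Rademacher process. Conditionally on $(X_i)_{i=1}^N$, the increments are sub-Gaussian with Orlicz norm governed by $d_X^2(v,w) = N^{-2}\sum_i \langle X_i, v-w\rangle^2 \langle X_i, v+w\rangle^2$. By Cauchy--Schwarz and hypercontractivity, $\E\, d_X^2(v,w) \lesssim N^{-1}\|\Sigma\|(v-w)^\top \Sigma (v-w)$ up to constants depending on $\kappa$, so the natural deterministic majorant scales as $\sqrt{\|\Sigma\|/N}\cdot\|\Sigma^{1/2}(v-w)\|$. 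The associated $\gamma_2$ functional of $S^{d-1}$ is then of order $\sqrt{\|\Sigma\|/N} \cdot \E\|\Sigma^{1/2} g\| \asymp \|\Sigma\|\sqrt{\mathbf{r}(\Sigma)/N}$, which is exactly the claimed rate with dependence on the effective rank rather than $d$.

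The main obstacle is that $d_X^2$ is a random fourth-order quantity, so replacing it by its expectation requires uniform concentration of the process $(v,w) \mapsto N^{-1}\sum_i \langle X_i, v-w\rangle^2 \langle X_i, v+w\rangle^2$. This is where $p>4$ is essential: a moment strictly above $4$ is needed for the associated fluctuation estimate to close, with constants that deteriorate as $p \to 4^+$, matching the stated dependence $c(p), C(p) \to \infty$. I would implement this either through a Mendelson--Paouris-type quadratic-process bound adapted to the bounded truncated summands, or by iterating on the higher moments of $\langle X, v\rangle$ afforded by the $L_p$--$L_2$ norm equivalence and balancing the two regimes of Bernstein inside the chaining increments.
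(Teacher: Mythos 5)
The proposal goes in a direction the paper explicitly considers and rejects, and the obstacle the authors point to is exactly where your argument does not close. You reduce the problem to a conditional Rademacher chaining with the random metric $d_X^2(v,w) = N^{-2}\sum_i \langle X_i, v-w\rangle^2 \langle X_i, v+w\rangle^2$, and correctly observe that the chaining only delivers the claimed $\gamma_2 \asymp \|\Sigma\|\sqrt{\mathbf r(\Sigma)/N}$ bound if $d_X$ can be replaced uniformly by its deterministic envelope. But that replacement step is \emph{not easier than the theorem you are proving; it is strictly harder}. Uniformly controlling $\sup_{v,w} N^{-1}\sum_i \langle X_i, v-w\rangle^2 \langle X_i, v+w\rangle^2$ is a degree-four empirical process in $X$, whereas the original quantity is degree two. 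Repeating your own logic on this auxiliary process would require controlling a degree-eight process, hence finite eighth moments of $\langle X, v\rangle$, which $L_p$--$L_2$ equivalence with an arbitrary $p>4$ does not supply; and even that would not close because the recursion never terminates. The Mendelson--Paouris type quadratic-process bounds you invoke assume sub-exponential or $\psi_\alpha$-type marginals precisely to break this circularity, and are not available under a bare $p>4$ moment assumption. This is the content of the authors' remark that ``the problem with using the generic chaining in our case, is that we only assume the existence of $p > 4$ moments, which does not allow to control the increments of empirical processes with the required accuracy.''

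The paper's proof instead avoids chaining entirely. It fixes a truncation level $\lambda$ and splits $\frac{1}{N}\sum_i \langle X_i, v\rangle^2$ into a bounded ``spread part'' $\frac{1}{N\lambda}\sum_i \psi(\lambda\langle X_i,v\rangle^2)$ and a ``peaky part'' $\frac{1}{N}\sum_i \langle X_i,v\rangle^2 \ind{\lambda\langle X_i,v\rangle^2>1}$. The spread part is handled by a PAC-Bayesian variational inequality (Lemma~\ref{lem:pacbayes}) with Gaussian posteriors centered at $v$ with covariance $\beta^{-1}I_d$; the $\mathcal{KL}$ penalty $\beta=\mathbf r(\Sigma)$, not a metric entropy integral, is what makes the bound dimension-free, and only $L_4$--$L_2$ equivalence is needed there. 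The peaky part is where $p>4$ enters: it is bounded via Tikhomirov's combinatorial machinery (the chromatic-number coloring Proposition~\ref{prop:coloringprop}, the sparsification Proposition~\ref{prop:sparsification}, and the self-bounding inequality for $f(s,[N])$ in Theorem~\ref{thm:rearrengmentthm}), adapted so that only $\tr(\Sigma)$ and $\|\Sigma\|$, not $d$, appear. The random set $I_v = \{i : \lambda\langle X_i,v\rangle^2 > 1\}$ is shown to have cardinality $\lesssim_p \mathbf r(\Sigma)$ with high probability, and plugging this back into the $f$-bound controls the peaky contribution without ever estimating a fourth-order increment process. If you want to salvage your write-up, the symmetrization step and the per-direction variance computation are fine and reappear in the paper (Corollary~\ref{cor:baiyin} and \eqref{eq:normforth}), but you must abandon the chaining-with-random-metric plan and replace the uniformization step with one of these two mechanisms.
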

A natural attempt to prove this bound would be to apply the matrix Bernstein inequality (see the survey \cite{tropp15} for more details on matrix concentration inequalities) as in \cite{mendelson2020robust}. However, in this case, we would get an additional multiplicative $\log \mathbf{r}(\Sigma)$-term that does not allow us to recover the optimal Bai-Yin rate of convergence in Theorem \ref{thm:informalmain} for $p > 4$. Alternatively one can try the generic chaining machinery. We refer to the monograph of Talagrand \cite{Talagrand2014} for a detailed exposition of this method. For example, the analysis based on the generic chaining is known to recover the optimal dependence on the effective dimension in the sub-Gaussian case \cite{koltchinskii2017operators}. The problem with using the generic chaining in our case, is that we only assume the existence of $p > 4$ moments, which does not allow  to control the increments of empirical processes with the required accuracy. The proof of Theorem \ref{thm:baiyin} is based on the combination of the (dimension dependent) approach developed by K. Tikhomirov \cite{tikhomirov2018sample} with the variational approach applied by the second-named author of this paper in \cite{zhivotovskiy2021dimension}.

\begin{remark}
Whether the bound of Theorem \ref{thm:baiyin} holds for $p = 4$ is a well-known open problem (see \cite{vershynin2012close} and the survey \cite{vershynin_2012}) even in the isotropic case where $\Sigma = I_d$. The analysis of Theorem \ref{thm:informalmain}  bypasses the bound of Theorem \ref{thm:baiyin} in the regime where $p = 4$. This makes our analysis different from the approach used in \cite{mendelson2020robust}.
\end{remark}

\begin{remark} Theorem \ref{thm:baiyin} can be used to remove the logarithm factor in the Median-of-Means estimator proposed in \cite{mendelson2020robust} for $p>4$.
\end{remark}
\paragraph{On the breakdown point of our estimator.} To explain informally, the breakdown point of an estimator is defined as the largest proportion of outliers in
the data for which the estimator gives a non-vacuous result \cite{hampel1971general}. The form of the bounds we are interested in is a high probability bound of the form $\|\widehat\Sigma - \Sigma\| \le \varepsilon \|\Sigma\|$, where $\widehat \Sigma$ is our estimator and $\varepsilon > 0$ is precision parameter. Since $\widehat \Sigma = 0$ satisfies this inequality with $\varepsilon = 1$, in Theorem \ref{thm:informalmain} we are focusing on $\eta \in [0, c]$, where $c$ is a small enough constant so that for large enough sample size we guarantee $\varepsilon < 1$. 

\paragraph{Practical considerations.} Our main focus is on the statistical properties of the estimator. In fact, we are aiming to achieve the best possible statistical performance and we left computational questions aside. While there is some algorithmic progress in the case of adversarial contamination (see the survey \cite{diakonikolas2019recent}), in a recent paper \cite{cherapanamjeri2020algorithms}, the authors provide an evidence that achieving the same for heavy-tailed distributions is computationally hard at least when Median-of-Means estimators are used. For some practical estimators in the context of heavy-tailed covariance estimation, we refer to \cite{ke2019user, hardle2021robustifying, minsker2022robust}.

\paragraph{Technical overview.} Our analysis connects three separate arguments in the literature. First, we prove a dimension-free version of the Bai-Yin theorem (Theorem \ref{thm:baiyin}) that allows us to eliminate unnecessary logarithmic factors in our bounds. The proof of this result consists of two parts: The analysis of the so-called \emph{peaky part} is based on the arguments of K. Tikhomirov  \cite{tikhomirov2018sample}, which in itself improves the line of research \cite{bourgain1996random, adamczak2010quantitative, mendelson2014singular, guedon2017interval}. The dimension-free analysis of the \emph{spread part} is based on the variational inequality techniques applied by the second-named author of this paper in \cite{zhivotovskiy2021dimension}. The latter approach traces back to the works of O. Catoni and co-authors on robust mean estimation \cite{audibert2011robust, catoni2016pac, giulini2018robust, catoni2017dimension}. Extending the results in \cite{zhivotovskiy2021dimension}, we prove Theorem \ref{thm:informalmain} in the case where $p = 4$. Proposition \ref{prop:estlargesteigenvalue} provides an optimal estimator for the largest eigenvalue of the covariance matrix and is based on Catoni's estimator \cite{catoni2012challenging, catoni2016pac}. Finally, the proof of Theorem \ref{thm:informalmain} in the regime $p > 4$ is based on combining Theorem \ref{thm:baiyin} with the second order version of the multivariate trimmed mean estimator of Lugosi and Mendelson \cite{lugosi2021robust}. Our lower bounds are based on reducing to corresponding lower bounds in the multivariate mean estimation setup.

\paragraph{Structure of the paper.} The rest of the paper is organized as follows. In Section \ref{sec:baiyin}, we present a proof of Theorem \ref{thm:baiyin}. In Section \ref{sec:pfour}, we provide a proof Theorem \ref{thm:informalmain} in the regime $p = 4$. In Section \ref{sec:opernorm}, we provide an auxiliary result on estimating the largest eigenvalue of the covariance matrix. Then, using Theorem \ref{thm:baiyin} in Section \ref{sec:pmorefour}, we give a proof Theorem \ref{thm:informalmain} in the regime $p > 4$.  We conclude with a detailed discussion of the lower bounds, showing the optimality of our results in Section \ref{sec:optimality}. 

\paragraph{Notation.} Throughout the proofs $C(p)$ and $c(p)$ will denote the constants depending only on $p$ and (possibly) on $\kappa(p)$, where $\kappa(\cdot)$ is given by \eqref{eq:momeqv}. The exact values of $C(p)$ and $c(p)$ may change from line to line. For an integer $N$, we set $[N] = \{1, \ldots, N\}$. Let $\ind{A}$ denote the indicator of an event $A$. The symbol $\mathbb{R}_{+}$ denotes the set of positive reals.  For any two functions (or random variables) $f, g$ defined in some common domain, the notation $f \lesssim g$ means that there is an absolute constant $c$ such that $f \le cg$ and $f\sim g$ means that $f\lesssim g$ and $g\lesssim f$. For $(x_i)_{i = 1}^m \in \mathbb{R}^m$ the sequence $(x_i^*)_{i = 1}^m \in \mathbb{R}^m$ is a non-increasing rearrangement of $(|x_i|)_{i = 1}^m$. For a set $I \subseteq [N]$ let $I^c = [N]\setminus I$. Let $\mathbb{S}_{+}^d$ denote the set of $d$ by $d$ positive-definite matrices. The symbol $\|\cdot\|$ denotes the operator norm of a matrix or the Euclidean norm of the vector depending on the context. The symbol $\|a\|_{0}$ corresponds to the number of non-zero components of the vector $a$. For a random variable $X$, let $\|X\|_{\infty}$ denote its essential supremum. Let $ \mathcal{KL}(\rho, \mu) = \int\log\left(\frac{d\rho}{d\mu}\right)d\rho$
denote the Kullback-Leibler divergence between a pair of measures $\rho$ and $\mu$.

\section{Proof of Theorem \ref{thm:baiyin}}
\label{sec:baiyin}
We start by proving the dimension-free version of the Bai-Yin theorem.
Fix $\lambda > 0$. In the notation of Theorem \ref{thm:baiyin}, let us write the following decomposition:

\begin{align*}
\sup_{v\in S^{d-1}} \left| \frac{1}{N}\sum\nolimits_{i=1}^N \langle X_i,v\rangle^2 - \E\langle X,v\rangle^2\right| &\le \underbrace{\sup_{v\in S^{d-1}}\frac{1}{N}\sum\nolimits_{i=1}^N \langle X_i,v\rangle^2 \ind{\lambda\langle X_i,v\rangle^2>1}}_{\textrm{Peaky part}}
\\
&\qquad+ 
\underbrace{\sup_{v\in S^{d-1}}\left|\frac{1}{N\lambda}\sum\nolimits_{i=1}^N \psi(\lambda\langle X_i,v\rangle^2) - \E\langle X,v\rangle^2 \right|}_{\textrm{Spread part}}~,
\end{align*}
where the terminology comes from \cite[Chapter 9.4]{Talagrand2014}. Our analysis will consist of two steps, where we first analyze the \emph{peaky part} and then analyze the \emph{spread part}.

\subsection{Dimension-free upper bound on the peaky part}
To analyze this term we follow the strategy of K. Tikhomirov in \cite{tikhomirov2018sample}, which in itself improves a line of research \cite{bourgain1996random,  adamczak2010quantitative, mendelson2014singular, Talagrand2014, guedon2017interval}. Although this part of our proof follows their steps, we need several modifications to avoid any explicit dependence on the dimension. Following \cite{tikhomirov2018sample} given two sets $C,I \subset [N]$ and  a positive integer $k$, we define the quantities  $f(k,C)$, $g(k,C,I)$ and $W_{v,i}$ as
\begin{equation*}
    f(k,C)=  \sup_{\substack{\|y\|_2=1 \\ \|y\|_0 \le k \\ \supp{y}\subset C }} \left\|\sum_{i=1}^N y_i X_i\right\|^2,
\end{equation*}
\begin{equation*}
g(k,C,I)= \sup_{\substack{\|y\|_2=1 \\ \|y\|_0 \le k}} \sup_{\substack{\|z\|_2=1 \\ \|z\|_0 \le k}} \left\langle \sum_{i\in I\cap C}y_iX_i,\sum_{j\in I^c \cap C}z_jX_j \right\rangle,
\end{equation*}
\begin{equation}
\label{eq:wvi}
W_{v,i}= \left\langle X_i,\sum_{j=1}^N v_jX_j \right\rangle.
\end{equation}
In particular, as in \cite[Equations 9.140 and 9.142]{Talagrand2014}, we have
\begin{equation}
\label{eq:relationfanda}
 \sup\limits_{v \in S^{d - 1}}\sup_{|I|\le k}\sum\limits_{i \in I}\langle v, X_i\rangle^2= \sup_{|I|\le k}\sup_{\sum_{i\in I}a_i^2 \le 1} \left\|\sum_{i\in I}a_iX_i\right\|^2 = f(k, [N]).
\end{equation}
We show below that, to control the peaky part, it is sufficient to upper bound the quantity $f(k, [N])$.
The role of $g(K, C, I)$ goes as follows: Using a standard decoupling argument (see the derivation in \cite[page 11]{tikhomirov2018sample}), we obtain that
\[
f(k,C) \le \max_{i\le N} \|X_i\|^2 + 2^{-N+2} \sum_{I\subset[N]}g(k,C,I).
\]
Since $\max_{i\le N}\|X_i\|^2 \le (N\tr(\Sigma)\|\Sigma\|)^{1/2}$ by assumption, we limit our attention on bounding $g(k,C,I)$. We use the following auxiliary bound. Its application should be compared with the application of Lemma 2.6 in \cite{tikhomirov2018sample}. Due to the dimension-free nature of our bound, we do not capture the best possible dependence on $p$ in the bound below and only exploit the case of $p = 4$. Despite that, this still allows us to prove the desired result.
\begin{lemma}
\label{lem:normofthevec}
Let $Y \in \mathbb{R}^d$ be a zero mean random vector with covariance $\Sigma$ satisfying the $L_p-L_2$ norm equivalence for $p>4$ with $\kappa(\cdot)$ in \eqref{eq:momeqv}. 
Denote $X = Y\ind{\|Y\| \le R}$ where $R > 0$ and let $\kappa = \kappa(4)$. It holds that
\[
\mathbb{P}(\|X\|_2\ge t) \le \kappa^{4}\frac{\tr(\Sigma)^2}{t^4}.
\]
\end{lemma}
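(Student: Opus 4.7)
The plan is to apply Markov's inequality at the fourth moment, so the task reduces to bounding $\E\|X\|^4$ by $\kappa^4\tr(\Sigma)^2$. This is a dimension-free analogue of the classical fact that one can sum variances via Parseval's identity; the $L_4$–$L_2$ norm equivalence will play the role played by sub-Gaussianity in the lighter-tailed setting.

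Concretely, I would first fix an orthonormal basis $(e_i)_{i=1}^d$ of $\R^d$ (any basis works; choosing eigenvectors of $\Sigma$ is convenient for bookkeeping but not essential) and write
\[
\|X\|^4 = \Bigl(\sum_{i=1}^d \langle X, e_i\rangle^2\Bigr)^2 = \sum_{i,j=1}^d \langle X, e_i\rangle^2 \langle X, e_j\rangle^2.
\]
Taking expectations and applying Cauchy–Schwarz to each summand,
\[
\E\langle X, e_i\rangle^2 \langle X, e_j\rangle^2 \le \bigl(\E\langle X, e_i\rangle^4\bigr)^{1/2}\bigl(\E\langle X, e_j\rangle^4\bigr)^{1/2}.
\]

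Next I would use that truncation can only decrease moments: since $|\langle X, v\rangle| \le |\langle Y, v\rangle|$ pointwise, we have $\E\langle X, e_i\rangle^4 \le \E\langle Y, e_i\rangle^4$. Invoking the $L_4$–$L_2$ norm equivalence \eqref{eq:momeqv} with $q=4$ (this is the only place the hypothesis is used, and it is valid because $p>4$), we get
\[
\E\langle Y, e_i\rangle^4 \le \kappa^4 \bigl(\E\langle Y, e_i\rangle^2\bigr)^2 = \kappa^4 \Sigma_{ii}^2,
\]
so $\bigl(\E\langle X, e_i\rangle^4\bigr)^{1/2} \le \kappa^2 \Sigma_{ii}$. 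Summing,
\[
\E\|X\|^4 \le \kappa^4 \sum_{i,j=1}^d \Sigma_{ii}\Sigma_{jj} = \kappa^4 \tr(\Sigma)^2.
\]

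Finally, Markov's inequality applied to the non-negative random variable $\|X\|^4$ yields
\[
\Pr(\|X\| \ge t) = \Pr(\|X\|^4 \ge t^4) \le \frac{\E\|X\|^4}{t^4} \le \kappa^4 \frac{\tr(\Sigma)^2}{t^4},
\]
which is the claimed bound. There is no genuine obstacle here: the only subtle point is recognizing that one should Cauchy–Schwarz each diagonal entry of the expectation of $XX^\top \otimes XX^\top$ separately rather than trying to exploit cross-moments, which would bring back dimension-dependent factors and fail to match the $\tr(\Sigma)^2$ scaling.
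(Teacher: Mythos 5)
Your proof is correct and follows essentially the same route as the paper: expand $\|X\|^4 = \sum_{i,j}\langle X,e_i\rangle^2\langle X,e_j\rangle^2$, Cauchy--Schwarz the cross terms, use that truncation only decreases moments together with the $L_4$--$L_2$ equivalence to get $\E\langle X,e_i\rangle^4 \le \kappa^4\Sigma_{ii}^2$, sum to $\kappa^4\tr(\Sigma)^2$, and finish with Markov at the fourth power. The only cosmetic difference is that the paper separates the diagonal terms $i=j$ (where no Cauchy--Schwarz is needed) from the off-diagonal terms before applying H\"older, whereas you treat all $(i,j)$ pairs uniformly; the result is identical.
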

\begin{proof} We have
\begin{equation*}
\E\|X\|_2^4 = \E\left(\sum\nolimits_{i=1}^d \langle X_i,e_i\rangle^2\right)^2 =\E \sum\nolimits_{i=1}^d \langle X,e_i\rangle^4 + \E\sum\nolimits_{k\neq l} \langle X,e_k\rangle^2\langle X,e_l\rangle^2.
\end{equation*}
We use that, for all $i\in [d]$, 
\[
(\E\langle X,e_i\rangle^4)^{1/4} \le (\E\langle Y,e_i\rangle^4)^{1/4}\le \kappa (\E\langle Y,e_i\rangle^2)^{1/2} = \kappa\Sigma_{ii}^{1/2}.
\]
Now, by the above display and H\"{o}lder's inequality we have
\begin{align*}
\kappa^4 \sum\nolimits_{i=1}^d (\E \langle X,e_i\rangle^2)^2 + \kappa^{4}\sum\nolimits_{k \neq l} \E\langle X,e_k\rangle^2 \E\langle X,e_l\rangle^2 &= \kappa^4\left(\sum\nolimits_{j=1}^d \Sigma_{jj}^2 + \sum\nolimits_{k \neq l} \Sigma_{kk} \Sigma_{ll}\right) 
\\
&= \kappa^4 \tr(\Sigma)^2.
\end{align*}
The Markov's inequality concludes the proof.
\end{proof}

Following \cite{tikhomirov2018sample}, the next step is to split the vectors $X_1, \ldots, X_N$ is several groups so that the vectors in each group are almost orthogonal.  Fix $H > 0$. Consider the random graph $\mathcal G_{H} = ([N], E)$, where the set of edges is
\[
E = \left\{(i, j): 1 \le i < j \le N, \quad |\langle X_i, X_j\rangle| > H\max\limits_{k \le N}\|X_k\|\right\}.
\]
If we color this graph so that no two adjacent vertices share the same color, we have that the vertices of the same color correspond to a pair of vectors having a small inner product. Let $\chi(\mathcal G_{H})$ denote the chromatic number of $\mathcal G_{H}$. We now induce the sets $\{\mathcal{C}_m^H\}_{m = 1}^N$ performing the partition of $[N]$ such that $\mathcal{C}_m^H = \emptyset$ for $m > \chi(\mathcal G_{H})$ and each $\mathcal{C}_m^H$ contains the vertices of the same color. Following the proof of the dimension-dependent result, we provide a dimension-free bound on the chromatic number of this random graph. 

\begin{proposition}
\label{prop:coloringprop}
Let $Y \in \mathbb{R}^d$ be a zero mean random vector with covariance $\Sigma$ satisfying the $L_p-L_2$ norm equivalence for $p>4$ with $\kappa(\cdot)$ as in \eqref{eq:momeqv}. Define, for an arbitrary positive $R$, $X = Y\ind{\|Y\| \le R}$ and let $\kappa = \kappa(4)$. Then, for any $H>0$ and any integer $m>1$, with probability at least $1-(\kappa(p)^p\|\Sigma\|^{p/2}NH^{-p})^{m-1} N\kappa(p)^4 \tr(\Sigma)^2 H^{-4}$,
\[
\chi(\mathcal{G}_H) \le m.
\]
\end{proposition}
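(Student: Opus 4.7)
The plan is to follow the combinatorial strategy of \cite{tikhomirov2018sample}, replacing each dimension-dependent moment estimate by a dimension-free one coming from the $L_p$-$L_2$ norm equivalence \eqref{eq:momeqv} together with Lemma \ref{lem:normofthevec}. First, I would use the standard degeneracy-based fact that $\chi(\mathcal{G}_H) > m$ forces $\mathcal{G}_H$ to contain a subgraph of minimum degree at least $m$, which in turn contains a simple path through at least $m$ distinct vertices. Up to a union bound over the at most $N^m$ ordered tuples $\mathbf{i} = (i_1, \ldots, i_m)$ of distinct indices in $[N]$, it then suffices to show that the event
\[
A_{\mathbf{i}} = \bigcap_{k=1}^{m-1} E_k, \qquad E_k = \bigl\{|\langle X_{i_k}, X_{i_{k+1}}\rangle| > H \max_{j} \|X_j\|\bigr\},
\]
has probability at most $(\kappa(p)^p \|\Sigma\|^{p/2} H^{-p})^{m-1}\,\kappa(p)^4 \tr(\Sigma)^2 H^{-4}$.

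For a fixed tuple $\mathbf{i}$, two observations carry the argument. Since $\max_j \|X_j\| \ge \|X_{i_k}\|$, the event $E_k$ is contained in the enlarged event $E_k' = \{|\langle X_{i_k}, X_{i_{k+1}}\rangle| > H \|X_{i_k}\|\}$, which depends only on the pair $X_{i_k}, X_{i_{k+1}}$ and no longer involves the full sample through the maximum. Conditioning on $X_{i_k}$ and applying \eqref{eq:momeqv} to the random vector $X_{i_{k+1}}$ gives the uniform per-edge bound
\[
\Pr(E_k' \mid X_{i_k}) \le \frac{\kappa(p)^p (X_{i_k}^{\top}\Sigma X_{i_k})^{p/2}}{H^p \|X_{i_k}\|^p} \le \frac{\kappa(p)^p \|\Sigma\|^{p/2}}{H^p}.
\]
Iterated conditioning from $k=m-1$ down to $k=1$ then yields $\Pr(\bigcap_{k=1}^{m-1} E_k' \mid X_{i_1}) \le (\kappa(p)^p \|\Sigma\|^{p/2} H^{-p})^{m-1}$ uniformly in $X_{i_1}$. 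The second observation is that Cauchy--Schwarz combined with $\max_j \|X_j\| \ge \|X_{i_2}\|$ shows the single event $E_1$ already implies $\|X_{i_1}\| > H$, so that $A_{\mathbf{i}} \subseteq B := \{\|X_{i_1}\| > H\}$.

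These two observations combine cleanly because the conditional path bound is uniform in $X_{i_1}$: since $A_{\mathbf{i}} \subseteq B$,
\[
\Pr(A_{\mathbf{i}}) = \E\bigl[\mathbbm{1}_B\,\Pr(A_{\mathbf{i}} \mid X_{i_1})\bigr] \le \Bigl(\frac{\kappa(p)^p \|\Sigma\|^{p/2}}{H^p}\Bigr)^{m-1}\Pr(B),
\]
and $\Pr(B) \le \kappa(p)^4 \tr(\Sigma)^2 H^{-4}$ by Lemma \ref{lem:normofthevec}. Multiplying by the $N^m$ tuples in the union bound produces the announced probability estimate. The main obstacle will be precisely this last coupling step: because $B$ is implied by any edge event $E_k$, the path and norm probabilities are not manifestly independent, and a direct multiplication of $(\kappa(p)^p\|\Sigma\|^{p/2}H^{-p})^{m-1}$ with $\kappa(p)^4\tr(\Sigma)^2 H^{-4}$ would double-count. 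The device that makes the extra factor $\kappa(p)^4\tr(\Sigma)^2 H^{-4}$ appear cleanly is to enlarge each edge to the max-free event $E_k'$, rendering the joint conditional probability along the path uniform in $X_{i_1}$, and only then to pull out the $\|X_{i_1}\|$-marginal via Lemma \ref{lem:normofthevec}.
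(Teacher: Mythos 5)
Your proposal is correct and the probability bound it produces matches the paper's exactly, but the route is genuinely different. The paper reaches the claim through the greedy coloring process of Tikhomirov: it introduces the auxiliary process $Y(i)$, observes that $\chi(\mathcal{G}_H) \le \max_i Y(i)$, and establishes the recursion $\E|\{j: Y(j)=m+1\}| \le \kappa(p)^p \|\Sigma\|^{p/2} N H^{-p}\,\E|\{j: Y(j)=m\}|$, anchored at $\E|\{j: Y(j)=2\}| \le N\Pr(\|X\|>H)$; Markov's inequality on $|\{j: Y(j) = m+1\}|$ then gives the tail bound. You instead pass through degeneracy: $\chi(\mathcal{G}_H) > m$ forces a subgraph of minimum degree $\ge m$, hence a simple path on $m$ vertices, and you union-bound over the at most $N^m$ ordered tuples. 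The per-tuple bound is the non-trivial part, and you handle it cleanly by enlarging each edge event $E_k$ to the max-free event $E_k'$, running iterated conditioning along the path to get $\Pr(\bigcap_k E_k' \mid X_{i_1}) \le (\kappa(p)^p\|\Sigma\|^{p/2}H^{-p})^{m-1}$ uniformly in $X_{i_1}$, and then isolating the $X_{i_1}$-marginal $\Pr(\|X_{i_1}\|>H)$ via Lemma~\ref{lem:normofthevec}. The factorisation step you flag as the ``main obstacle'' is indeed the crux, and your device of enlarging to $E_k'$ so the chain bound is uniform in $X_{i_1}$ is exactly what makes the multiplication legitimate; the paper sidesteps this by encoding the same chain structure into the process recursion and using Cauchy--Schwarz to deduce $Y(j)=2 \Rightarrow \|X_j\|>H$. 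Both arguments consume the same two ingredients (the $L_p$ moment bound on a single inner product along a fixed direction, and Lemma~\ref{lem:normofthevec} for the norm tail) and produce the identical bound $(\kappa(p)^p\|\Sigma\|^{p/2}NH^{-p})^{m-1} N\kappa(p)^4\tr(\Sigma)^2H^{-4}$ after you rewrite $N^m = N \cdot N^{m-1}$; your version is arguably more transparent about why the norm-tail factor enters multiplicatively without double-counting, while the paper's version stays closer to Tikhomirov's original template.
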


\begin{proof}
We consider a greedy coloring process. Let $Y(1), Y(2), \ldots$ be an auxiliary random process such that $Y(1)=1$ and
\begin{equation*}
Y(i)= \min \{ r\in \mathbb{N}: \forall j<i \ \text{with}\  Y(j)=r \ \text{we have}\ |\langle X_i,X_j\rangle|\le H\|X_j\| \}.
\end{equation*}
Note that, by definition, $\chi(\mathcal{G}_H) \le \max_{i\in [N]}Y(i)$. Next, for each $i>1$ and $m\ge 1$, we have
\begin{align*}
\mathbb{P}(Y(i)=m+1) &\le \mathbb{P}(\text{There is}\ l\le i-1 \  \text{such that} \ |\langle X_i,X_j\rangle|> H\|X_j\| \ \text{and}\ Y(l)=m)\\
& \le \sum_{l=1}^{i-1} \mathbb{P}(|\langle X_i,X_j\rangle|> H\|X_j\|\ \text{and}\ Y(l)=m)\\
& = \sum_{l=1}^{i-1} \mathbb{P}(|\langle X_i,X_j\rangle|> H\|X_j\|\ |\  Y(l)=m)\mathbb{P}(Y(l)=m)\\
& \le \kappa(p)^p\frac{\|\Sigma\|^{p/2}}{H^p}\sum_{l=1}^{i-1} \mathbb{P}(Y(l)=m)
\\
& \le \kappa(p)^p\frac{\|\Sigma\|^{p/2}}{H^p} \mathbb{E}|\{j\le N: Y(j) = m\}|,
\end{align*}
where we used that since $X_i$ is independent of $Y(l)$, and by Markov's inequality
\begin{align*}
\mathbb{P}(|\langle X_i,X_j\rangle|> H\|X_j\|\ |\  Y(l)=m) &= \mathbb{P}(|\langle X_i,X_j\rangle|> H\|X_j\| \ \textrm{and} \ \|X_j\| \neq 0\ |\  Y(l)=m) 
\\
& \le \frac{\kappa(p)^{p}\|\Sigma\|^{p/2}}{H^{p}}.
\end{align*}
We obtain the following recursion,
\begin{equation*}
\mathbb{E}|\{j\le N: Y(j) = m+1\}| \le \kappa(p)^p\|\Sigma\|^{p/2}N\frac{1}{H^p} \mathbb{E}|\{j\le N: Y(j) = m\}|
\end{equation*}
Now we apply Lemma \ref{lem:normofthevec} and use the monotonicity of $\kappa(\cdot)$ to obtain,
\begin{equation*}
\mathbb{E}|\{j\le N: Y(j) = 2\}| \le N \mathbb{P}(\|X\|>H) \le N\kappa(p)^4 \tr(\Sigma)^2 H^{-4}.
\end{equation*}
Combining the estimates above, we get 
\[
\mathbb{E}|\{j\le N: Y(j) = m+1\}|\le (\kappa(p)^p\|\Sigma\|^{p/2}NH^{-p})^{m-1} N\kappa(p)^4 \tr(\Sigma)^2 H^{-4}.
\]
Finally, we obtain
\begin{align*}
\mathbb{P}(\chi(\mathcal{G}_{H}) \ge m+1) &\le \mathbb{P}(\exists j\le N: \  Y(j)=m+1)
\\
& \le \mathbb{E}|\{j\le N: Y(j) = m+1\}|
\\
&\le (\kappa(p)^p\|\Sigma\|^{p/2}NH^{-p})^{m-1} N\kappa(p)^4 \tr(\Sigma)^2 H^{-4}.
\end{align*}
\end{proof}
We need the following result. It can be seen as one of the main ingredients of the proof and follows from the so-called \emph{Sparsifying} lemma \cite[Lemma 4.1]{tikhomirov2018sample}. Our key observation here is that this lemma does not involve explicitly the dimension $d$. Recall that for $(x_i)_{i = 1}^m \in \mathbb{R}^m$ the sequence $(x_i^*)_{i = 1}^m \in \mathbb{R}^m$ is a non-increasing rearrangement of $(|x_i|)_{i = 1}^m$ and $v$ is an $s$-sparse vector if $\|v\|_0 \le s$. We remark that the statement below is deterministic and holds for any realization of $X_1, \ldots, X_N$.

\begin{proposition}[Proposition 4.4 in \cite{tikhomirov2018sample}]
\label{prop:sparsification}
There exists an absolute constant $C>0$ such that the following holds.
Fix $I \subseteq [N]$ and let $\gamma \in (0, 1/3)$ with $k \ge 24/\gamma^2$ and $N \ge 128C\gamma^{-2}k$. Set $t = \lfloor \log_2 \frac{\gamma^2 k}{24}\rfloor$ and $k_j = \lfloor \frac{k}{2^j}\rfloor$ for $0 \le j \le t$. There are subsets $\mathcal N_j, \mathcal N^{\prime}_j$ for $0 \le j \le t - 1$ supported on $I$ and $I^c$ respectively and consisting of unit $\gamma k_j$-sparse vectors such that 
\[
\max\left\{\mathcal N_j, \mathcal N^{\prime}_j \right\} \le \left(\frac{CN}{\gamma k_j}\right)^{2\gamma k_j},
\]
and for any $\mathcal C \subseteq [N]$,
\[
g(k, \mathcal C, I) \le C\gamma^{-2}\left(\log k \max\limits_{i \neq j \in \mathcal C}|\langle X_i, X_j\rangle| + \sum\limits_{j = 0}^{t - 1}\sqrt{k_j}\left(\sup\limits_{u \in \mathcal N_j}\left(A_{u}\right)^*_{\left\lfloor \frac{k_{j + 1}}{16}\right\rfloor} + \sup\limits_{v \in \mathcal N^{\prime}_j}\left(B_{v}\right)^*_{\left\lfloor \frac{k_{j + 1}}{16}\right\rfloor}\right)\right),
\]
where the random vectors $A_u \in \mathbb{R}^{|I^c|}$ and $B_v \in \mathbb{R}^{|I|}$ are given by $A_u = (|W_{u, i}|)_{i \in I^c}$ and $B_v = (|W_{v, i}|)_{i \in I}$ and $W_{u, i}, W_{v, i}$ are given by \eqref{eq:wvi}.

\end{proposition}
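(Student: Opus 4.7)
The plan is to reproduce the sparsification argument of \cite[Proposition 4.4]{tikhomirov2018sample} and verify at each step that no quantity depending on the ambient dimension $d$ is introduced, so that the statement applies to our dimension-free setting. Concretely, let $y \in S^{N-1}$ with $\supp y \subseteq I$, $\|y\|_0 \le k$ and $z$ analogously with $\supp z \subseteq I^c$ be near-optimizers of $g(k,\mathcal C, I)$. I would first perform a dyadic decomposition of $y$ and $z$ according to the magnitudes of their coordinates: let $k_j = \lfloor k/2^j \rfloor$, and, for each level $0 \le j \le t-1$, isolate the coordinates of $y$ whose magnitude lies in the $j$-th dyadic band $[2^{-j-1}k^{-1/2}, 2^{-j}k^{-1/2}]$ (and symmetrically for $z$). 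This produces, at each level, a vector whose support has size at most $\gamma k_j$ up to a small adjustment handled by a separate rounding step.

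Next, I would build the finite nets $\mathcal N_j, \mathcal N'_j$ by rounding the entries of each level-$j$ approximant to a constant magnitude, so that every element of $\mathcal N_j$ is a unit vector with support in $I$, size $\le \gamma k_j$, and entries taking only finitely many values. Counting supports via $\binom{N}{\gamma k_j} \le (eN/(\gamma k_j))^{\gamma k_j}$ and multiplying by the number of sign/scale configurations yields the asserted cardinality bound $(CN/(\gamma k_j))^{2\gamma k_j}$. The crucial observation, which is what makes the result usable for our Theorem \ref{thm:baiyin}, is that these nets live in $\mathbb R^N$ and are indexed by \emph{which} samples are selected, not by directions in $\mathbb R^d$; the cardinality depends only on $N,\gamma,k_j$.

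With the nets constructed, I would expand
\[
\Big\langle \sum_{i \in I \cap \mathcal C} y_i X_i, \sum_{j \in I^c \cap \mathcal C} z_j X_j \Big\rangle
\]
into a double sum over dyadic levels. The off-diagonal contributions, where both $y$ and $z$ are replaced by their sparse approximants from $\mathcal N_j, \mathcal N'_j$, produce terms of the form $\sum_{i,j} \tilde y_i \tilde z_j \langle X_i, X_j \rangle$, and summing the geometric series in $j$ gives the $\log k \cdot \max_{i\ne j \in \mathcal C}|\langle X_i, X_j\rangle|$ term. The remaining "peeled" mass, corresponding to the top $\lfloor k_{j+1}/16\rfloor$ entries not captured by the sparse approximant, is controlled by applying Cauchy--Schwarz against the dual vectors $W_{u,\cdot}$ and $W_{v,\cdot}$ defined in \eqref{eq:wvi}; this is where the factor $\sqrt{k_j}$ and the order-statistics $(A_u)^*_{\lfloor k_{j+1}/16\rfloor}, (B_v)^*_{\lfloor k_{j+1}/16\rfloor}$ enter. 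Summing over $j$ from $0$ to $t-1$ and absorbing constants into the $C\gamma^{-2}$ prefactor produces the claimed inequality.

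The main obstacle, and the reason the statement is worth isolating, is the careful bookkeeping in the peeling step: one must show that the mass of $y$ beyond level $t$ is negligible (using $\|y\|_2 = 1$ and the choice $t \asymp \log(\gamma^2 k)$), and that the rounding of magnitudes into dyadic bands does not force the net to exceed the stated cardinality. The argument in \cite{tikhomirov2018sample} is essentially combinatorial and rests only on $N$, the sparsity parameters, and absolute constants; no union bound over an $\epsilon$-net of the sphere $S^{d-1}$ is required. My plan is therefore to follow their proof verbatim while explicitly flagging that every constant produced is absolute and every cardinality estimate is free of $d$.
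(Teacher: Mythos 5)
Your proposal is correct and aligns with how the paper handles this statement: the paper does not reprove the proposition but cites Tikhomirov's Proposition 4.4 directly, observing only that it is a deterministic statement whose constants and cardinality bounds depend on $N, \gamma, k_j$ but not on the ambient dimension $d$. Your sketch faithfully reproduces the structure of Tikhomirov's sparsification argument (dyadic peeling of the near-optimizers, nets indexed by sample indices rather than directions in $\mathbb{R}^d$, Cauchy--Schwarz against the $W_{u,\cdot}$-order statistics) and correctly isolates the same key point the paper flags.
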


The next result is a dimension-free analog of Proposition 5.1 in \cite{tikhomirov2018sample} in the regime $p > 4$.
\begin{proposition}
\label{prop:gfuncupperbound}
There exists an absolute constant $C>0$ such that if $I\subset [N]$ with $|I|\le s$ is fixed and $\log \frac{N}{s}\ge C$, then simultaneously for all  $\mathcal{C}\subset [N]$, with probability at least $1-1/N^3$,
\begin{equation*}
g(s,\mathcal{C},I) \le C\left(\log^2\frac{N}{s}\log s\max_{i\neq j \in \mathcal{C}}|\langle X_i,X_j\rangle| + p\kappa(p) \|\Sigma\|^{1/2}\log^2\frac{N}{s}\sqrt{s}\left( \frac{N}{s}\right)^{1/p} \sqrt{f(s, [N])}\right) .
\end{equation*}
\end{proposition}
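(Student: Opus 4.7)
The plan is to apply the Sparsifying Lemma (Proposition \ref{prop:sparsification}) with $k = s$ and with $\gamma$ chosen as a sufficiently small absolute constant (so that $\gamma^{-2}$ is absorbed into $C$ and the exponent $2\gamma k_j$ in the size of the nets stays small enough). This decomposes
\[
g(s,\mathcal{C},I) \le C\log s\max_{i\neq j\in\mathcal{C}}|\langle X_i,X_j\rangle| + C\sum_{j=0}^{t-1}\sqrt{k_j}\Bigl(\sup_{u\in\mathcal{N}_j}(A_u)^*_{\lfloor k_{j+1}/16\rfloor} + \sup_{v\in\mathcal{N}'_j}(B_v)^*_{\lfloor k_{j+1}/16\rfloor}\Bigr),
\]
with $k_j = \lfloor k/2^j\rfloor$ and $t \lesssim \log s$. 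The first term already fits within the first term of the target bound (with room to spare in the $\log^2(N/s)$ factor), so it remains to control the order-statistic terms.

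Fix a level $j$ and a unit $\gamma k_j$-sparse vector $u\in\mathcal{N}_j$ supported on $I$, and set $Z_u = \sum_{i\in I}u_i X_i$. The identity \eqref{eq:relationfanda} gives $\|Z_u\|^2 \le f(\gamma k_j,[N]) \le f(s,[N])$. Conditioning on $(X_i)_{i\in I}$, the vectors $(X_i)_{i\in I^c}$ remain independent, and $W_{u,i} = \langle X_i, Z_u\rangle$ is a scalar linear functional of $X_i$. Since $X_i$ is obtained from $Y_i$ by indicator truncation, all marginal moments of $X_i$ are dominated by those of $Y_i$, so the $L_p$--$L_2$ norm equivalence \eqref{eq:momeqv} passes to $X_i$ and Markov's inequality yields, conditionally on $Z_u$,
\[
\Pr(|W_{u,i}| > \tau) \le \left(\frac{\kappa(p)\|Z_u\|\|\Sigma\|^{1/2}}{\tau}\right)^p.
\]
For $\ell = \lfloor k_{j+1}/16\rfloor$ the standard binomial estimate gives $\Pr\bigl((A_u)^*_\ell > \tau\bigr) \le \binom{|I^c|}{\ell}(\kappa(p)\|Z_u\|\|\Sigma\|^{1/2}/\tau)^{p\ell}$. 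Choosing the threshold
\[
\tau_j = C_1\,p\,\kappa(p)\,\|\Sigma\|^{1/2}\log(N/s)(N/k_j)^{1/p}\sqrt{f(s,[N])}
\]
for a sufficiently large absolute $C_1$ pushes this probability below $(t\cdot|\mathcal{N}_j|\cdot N^3)^{-1}$, using $|\mathcal{N}_j| \le (CN/(\gamma k_j))^{2\gamma k_j}$ and the fact that $\gamma$ is small enough for $|\mathcal{N}_j|^{1/(p\ell)}$ and $\binom{|I^c|}{\ell}^{1/(p\ell)}$ both to fit within constants times $(N/k_j)^{1/p}$. By the symmetric roles of $I$ and $I^c$, the same bound applies to $\sup_{v\in\mathcal{N}'_j}(B_v)^*_\ell$. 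A union bound over $u\in\mathcal{N}_j$, $v\in\mathcal{N}'_j$, and $j=0,\ldots,t-1$ drops the total failure probability below $N^{-3}$.

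Summing over levels, $\sum_{j=0}^{t-1}\sqrt{k_j}\,\tau_j \lesssim p\,\kappa(p)\|\Sigma\|^{1/2}\log(N/s)\sqrt{f(s,[N])}\sqrt{s}(N/s)^{1/p}\sum_{j\ge 0}2^{-j(1/2 - 1/p)}$, and since $p > 4$ the geometric sum converges to an absolute constant, yielding exactly the second term of the target estimate. The extra $\log(N/s)$ (giving $\log^2(N/s)$) arises from the $\log(N/s)$ inside $\tau_j$ together with a second $\log(N/s)$ absorbed when bounding $|\mathcal{N}_j|^{1/(p\ell)}$ and the binomial factor in terms of $(N/k_j)^{1/p}$. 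The main obstacle is precisely this bookkeeping: $\gamma$ must be taken smaller than a fixed absolute constant (independent of $p$) and $\log(N/s) \ge C$ must be assumed, so that the exponent of $N/k_j$ in $\tau_j$ equals $1/p$ up to absolute constants rather than a larger power such as $(1+O(\gamma))/p$, and so that stray factors of $N^{3/\ell}$ from the union bound and $t^{1/\ell}$ from the number of levels are all dominated by constants.
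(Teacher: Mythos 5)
Your plan follows the same skeleton as the paper (apply Proposition \ref{prop:sparsification}, condition on $(X_i)_{i\in I}$, control order statistics of $W_{u,i}$ by Markov plus a binomial/union bound over the nets, then sum over levels), but the choice of $\gamma$ is not a bookkeeping detail and your choice is fatal. You take $\gamma$ to be a small \emph{absolute constant}; the paper takes $\gamma = 1/\log(N/s)$. With $\gamma$ fixed, the union bound over the net $\mathcal{N}_j$ (of size $(CN/(\gamma k_j))^{2\gamma k_j}$) combined with the order-statistic estimate forces
\[
\tau_j \;\gtrsim\; \kappa(p)\,\|\Sigma\|^{1/2}\sqrt{f(s,[N])}\,\bigl(N/k_j\bigr)^{(1+c\gamma)/p}
\]
for some absolute $c>0$. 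The extra factor $(N/k_j)^{c\gamma/p}$ is genuinely polynomial in $N/k_j$ for any fixed $\gamma>0$; after summing over $j$ it contributes $(N/s)^{(1+c\gamma)/p}$ in place of $(N/s)^{1/p}$, and no power of $\log(N/s)$ in the target bound can absorb this when $N/s$ is large (e.g.\ $N/s \sim N^{0.9}$). Your statement that the exponent ``equals $1/p$ up to absolute constants'' is exactly the gap: an absolute-constant perturbation in the exponent is a polynomial change in the bound, not a constant-factor one. Consequently the single $\log(N/s)$ factor you insert into $\tau_j$ is nowhere near enough to close the union bound.

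The paper's choice $\gamma = 1/\log(N/s)$ is what makes the argument go through: then $(N/k_j)^{c\gamma/p}=\exp\bigl(c\log(N/k_j)/(p\log(N/s))\bigr)$ stays bounded by an absolute constant uniformly over $0\le j\le t-1$ (since $k_j\gtrsim \gamma^2 s$), so the threshold $(32e)^{1/p}\|\Sigma\|^{1/2}\kappa(p)\sqrt{f(s,[N])}(N/k_{j+1})^{p^{-1}(1+256\gamma)}$ in the paper carries no logarithmic inflation. The $\log^2(N/s)$ factors in the target then arise entirely from the $C\gamma^{-2}$ prefactor in Proposition \ref{prop:sparsification}, not from the threshold or the union bound as you suggest. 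The extra $p$ factor in the target comes from the geometric series $\sum_j 2^{j((1+256\gamma)/p - 1/2)}$, whose constant blows up as $p\downarrow 4$. Two secondary points: with $\gamma$ decaying, Proposition \ref{prop:sparsification} requires $s \ge 24/\gamma^2 \sim \log^2(N/s)$, so the case $s < C\log^2(N/s)$ must be handled separately (the paper dispatches it trivially at the start), and you should record that the geometric sum converges only because $p>4$ and $\log(N/s)\ge C$ ensure $(1+256\gamma)/p < 1/2$.
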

\begin{proof}
We provide the required changes of the original proof.
First, we notice that if $s<C \log^2\frac{N}{s}$, then 
\begin{equation*}
g(s,\mathcal{C},I)\le s\max_{i\neq j \in \mathcal{C}}|\langle X_i,X_j\rangle|<C \log^2\frac{N}{s}\max_{i\neq j \in \mathcal{C}}|\langle X_i,X_j\rangle|,
\end{equation*}
and the claim trivially follows.

For the rest of the proof we assume $s\ge C \log^2\frac{N}{s}$. We define $\gamma=\frac{1}{\log(N/s)}$, $t=\lfloor\log_2\frac{\gamma^2 s}{C}\rfloor$ and $k_j=\lfloor\frac{s}{2^j}\rfloor$. We choose $C>24$ to be able to apply Proposition \ref{prop:sparsification} with this choice of $\gamma, t$ and $k_j$. For a fixed $0 \le j < t$ consider $u \in \mathcal N_j$, where $\mathcal N_j$ is given by Proposition \ref{prop:sparsification}. By its definition $u$ is supported on $I$. Therefore, for any $l \in I^{c}$ recalling \eqref{eq:wvi}, we have $W_{u,l}=\langle X_l,\sum_{i\in I}u_iX_i\rangle$. Observe that conditionally on the realization of $X_i, i \in I$, we have that $W_{u,l}$ are independent random variables for all $l \in I^c$. Therefore, by the norm equivalence assumption \eqref{eq:momeqv}, we have
\[
\E[|W_{u,l}|^p|X_i, i\in I] \le \kappa(p)^p \|\Sigma\|^{p/2} \left\|\sum\nolimits_{i\in I}u_iX_i\right\|^p \le \kappa(p)^p \|\Sigma\|^{p/2}f(s, [N])^{p/2}.
\]
For the same vector $u$ and the same set $I$, consider the random vector 
\[
A_u = (|W_{u, l}|)_{l \in I^c}.
\]
We apply the standard bound \cite[Lemma 2.5]{tikhomirov2018sample} to control the coordinates of $A_u$ with
\[
\tau_j=(32e)^{1/p}\|\Sigma\|^{1/2}\kappa(p)\sqrt{f(s, [N])}\left(\frac{N}{k_{j+1}}\right)^{p^{-1}\left(1+256\gamma\right)}
\]
to obtain
\begin{equation*}
\begin{split}
\mathbb{P}((A_u)^*_{\lfloor k_{j + 1}/16\rfloor} \ge \tau_j)&\le \left(\frac{e\kappa^p\|\Sigma\|^{p/2}f(s, [N])^{p/2}N}{\tau_j^p\lfloor k_{j+1}/16\rfloor}\right)^{\lfloor k_{j+1}/16\rfloor}\\
& \le \left(\frac{k_{j+1}}{N}\right)^{4\gamma k_j}.
\end{split}
\end{equation*}
We union bound over the net $\mathcal{N}_j$, whose size is bounded in Proposition \ref{prop:sparsification}, to get for some absolute constant $C_1 > 0$,
\begin{equation*}
\mathbb{P}\left(\sup\limits_{u \in \mathcal N_j}(A_u)^*_{\lfloor k_{j + 1}/16\rfloor}\ge \tau_j\right)\le \left(\frac{k_{j+1}}{N}\right)^{4\gamma k_j}|\mathcal{N}_j| \le \left(\frac{C_1k_j}{\gamma N}\right)^{2\gamma k_j}\le  \left(\frac{k_j}{N}\right)^{\gamma k_j} \le \left(\frac{k_t}{N}\right)^{\gamma k_t} \le \frac{1}{N^4},
\end{equation*}
where we assumed $N\ge C_1^2\gamma^{-2}k_j$. Repeating the same arguments for $\mathcal{N}^{\prime}_j$ and summing over all $j$ in Proposition \ref{prop:sparsification} we obtain, with probability at least $1 - 1/N^3$,
\[
g(s, \mathcal C, I) \lesssim \gamma^{-2}\left(\log s \max_{i\neq j \in \mathcal{C}}|\langle X_i,X_j\rangle| + \sum\nolimits_{j = 0}^{t - 1}\tau_j\sqrt{k_j}\right).
\]
The sum $\sum\nolimits_{j = 0}^{t - 1}\tau_j\sqrt{k_j}$ can be upper bounded exactly the same was as in the the end of the proof of Proposition 5.1 in \cite{tikhomirov2018sample}. Using $p > 4$, we conclude the proof.
\end{proof}
The next result applies Proposition \ref{prop:gfuncupperbound} to upper bound $f(s,\mathcal{C}_m^H)$, where $\mathcal{C}_m^H$ is a class of coloring of the sample. 
\begin{lemma}
\label{lem:individualcolor}
Let $s\le N$ be fixed and assume that $\log\frac{N}{s}\ge C_1$, where $C_1>0$ is a large enough absolute constant. Let $\mathcal{C}_m^{H}$ be the class from the coloring of the sample $X_1,\ldots,X_N$ with threshold $H>0$. Then there exists an absolute constant $C>0$ such that, with probability at least $1-\frac{1}{N^2}$,
\[
f(s,\mathcal{C}_m^H) \le \max_{i\le N}\|X_i\|^2 + C\log^2\frac{N}{s}\left(H\log s \max_{i\le N}\|X_i\| + p\kappa\|\Sigma\|^{1/2}\sqrt{s}\left( \frac{N}{s}\right)^{1/p}\sqrt{f(s,[N])}\right).
\]
\end{lemma}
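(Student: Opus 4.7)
The proof plan is to combine the decoupling identity stated in the excerpt,
\[
f(s,\mathcal{C}_m^H)\le \max_{i\le N}\|X_i\|^2 + 2^{-N+2}\sum_{I\subset [N]} g(s,\mathcal{C}_m^H,I),
\]
with Proposition~\ref{prop:gfuncupperbound}, and to handle the sum over all $2^N$ subsets through a Fubini--Markov argument. The decisive input is the coloring property of $\mathcal{G}_H$: since $\mathcal{C}_m^H$ is a color class, no two distinct $i,j\in \mathcal{C}_m^H$ are adjacent in $\mathcal{G}_H$, so $\max_{i\ne j\in \mathcal{C}_m^H}|\langle X_i,X_j\rangle|\le H\max_{k\le N}\|X_k\|$ holds deterministically for every realization of the sample.

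For each fixed $I\subset [N]$, plugging this coloring estimate into Proposition~\ref{prop:gfuncupperbound} yields, with probability at least $1-1/N^3$,
\[
g(s,\mathcal{C}_m^H,I)\le G := C\log^2\tfrac{N}{s}\left(H\log s\,\max_{k\le N}\|X_k\| + p\kappa(p)\|\Sigma\|^{1/2}\sqrt{s}\bigl(\tfrac{N}{s}\bigr)^{1/p}\sqrt{f(s,[N])}\right).
\]
The key observation is that $G$ depends on $\omega$ only through $\max_k\|X_k\|$ and $f(s,[N])$, and not on $I$; this is precisely what avoids a catastrophic $2^N$-fold union bound. As a deterministic fallback one also has $g(s,\mathcal{C}_m^H,I)\le \max_{i\le N}\|X_i\|^2+sH\max_{k\le N}\|X_k\|$, obtained by expanding $\|\sum y_iX_i\|^2$ for a sparse unit vector $y$ supported in $\mathcal{C}_m^H$, bounding the diagonal by $\max_i\|X_i\|^2$ and the off-diagonal via $\|y\|_1\le\sqrt{s}$ together with the coloring estimate.

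To convert the per-$I$ bound into a bound on the full sum, I would set $\mathcal{E}_I=\{g(s,\mathcal{C}_m^H,I)\le G\}$ and apply Fubini to get $\E|\{I:\mathcal{E}_I^c\}|=\sum_I\Pr(\mathcal{E}_I^c)\le 2^N/N^3$. Markov's inequality then gives $|\{I:\mathcal{E}_I^c\}|\le 2^N/N$ with probability at least $1-1/N^2$. Splitting the decoupling sum accordingly, the contribution of the good $I$'s is at most $4G$, while the bad contribution is at most $\tfrac{4}{N}\max_{i\le N}\|X_i\|^2 + \tfrac{4s}{N}H\max_{k\le N}\|X_k\|$; since $s\le N$, the former is absorbed into $\max_i\|X_i\|^2$ and the latter is absorbed into the $H\log s\max_k\|X_k\|$ piece of $G$ up to absolute constants, giving the claimed inequality once we substitute back into the decoupling identity. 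The only genuine obstacle is the exponential number of subsets in the decoupling sum, and the uniformity of $G$ in $I$ together with the Fubini--Markov device is what makes a naive union bound unnecessary; everything else, namely the coloring property, the trivial control on $g$, and the arithmetic $s\le N$, is routine.
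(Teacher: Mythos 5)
Your reconstruction captures what the paper actually does here: the paper's own ``proof'' of Lemma~\ref{lem:individualcolor} is a one-line deferral to Lemma~5.2 of Tikhomirov's paper, and that proof is precisely the decoupling identity plus Proposition~\ref{prop:gfuncupperbound}, combined through the Fubini--Markov device you describe. The crucial observation you make --- that the right-hand side $G$ of Proposition~\ref{prop:gfuncupperbound} depends on the sample only through $\max_k\|X_k\|$ and $f(s,[N])$ and not on $I$, so that $\E\,|\{I:\mathcal{E}_I^c\}|\le 2^N/N^3$ and Markov at threshold $2^N/N$ converts a per-$I$ failure probability of $1/N^3$ into a $1/N^2$ bound for the entire decoupled sum --- is exactly the step that makes a naive $2^N$-fold union bound unnecessary, and it explains the loss of one power of $N$ between the two statements. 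Likewise, the deterministic coloring estimate $\max_{i\ne j\in\mathcal{C}_m^H}|\langle X_i,X_j\rangle|\le H\max_k\|X_k\|$ is the right input.

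Two small points worth tightening. First, your deterministic fallback bound on $g$ is derived by ``expanding $\|\sum_i y_iX_i\|^2$,'' but $g(s,\mathcal{C}_m^H,I)$ is the bilinear form $\langle\sum_{i\in I\cap\mathcal{C}}y_iX_i,\sum_{j\in I^c\cap\mathcal{C}}z_jX_j\rangle$, which has no diagonal terms because $i\in I$ and $j\in I^c$ force $i\ne j$. The right estimate is simply
\[
g(s,\mathcal{C}_m^H,I)\le \|y\|_1\|z\|_1\max_{i\ne j\in\mathcal{C}_m^H}|\langle X_i,X_j\rangle|\le sH\max_{k\le N}\|X_k\|,
\]
with no $\max_i\|X_i\|^2$ term. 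This matters for the bookkeeping: with your looser fallback, the bad-$I$ contribution carries an extra $\tfrac{4}{N}\max_i\|X_i\|^2$, which cannot be absorbed into the stated $\max_i\|X_i\|^2$ (whose coefficient is exactly $1$) nor cleanly into the $H\log s$ piece; with the correct bound the bad contribution is just $\tfrac{4s}{N}H\max_k\|X_k\|$, which is dominated by $CH\log s\log^2\tfrac{N}{s}\max_k\|X_k\|$ once $\log\tfrac{N}{s}\ge C_1$ and $s\ge 2$ (the case $s=1$ is trivial since $f(1,\mathcal{C}_m^H)\le\max_i\|X_i\|^2$). Second, Proposition~\ref{prop:gfuncupperbound} as stated carries the hypothesis $|I|\le s$, whereas the decoupling sum runs over all $2^N$ subsets, most of size about $N/2\gg s$; you should say explicitly that this hypothesis is not actually used anywhere in that proposition's proof (the net sizes, the tail bound for the order statistics of $A_u$, and the bound $\|\sum_{i\in I}u_iX_i\|^2\le f(s,[N])$ all go through for arbitrary $I\subset[N]$), so the proposition applies as needed.
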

\begin{proof}
The proof follows exactly the same steps (an application of Proposition \ref{prop:gfuncupperbound} in our case) as the proof of Lemma 5.2 in \cite{tikhomirov2018sample} by replacing $d$ (their notation for the dimension is $n$) by $s$.  
\end{proof}
Our final step is to upper bound the desired function $f(s,[N])$ using the upper bound on $f(s,\mathcal{C}_m^H)$ for different coloring classes. Our new observation is that since we are only interested in the regime $p > 4$, we can worsen the dependence on some logarithmic factors appearing in the corresponding result in \cite{tikhomirov2018sample}. This allows us to build our analysis on a somewhat weaker Lemma \ref{lem:normofthevec} (compare it with a dimension-dependent result in \cite[Lemma 2.6]{tikhomirov2018sample}).  
\begin{lemma}
\label{lem:upboundcolor}
Let $s,N$ satisfy $\log\frac{N}{s}\ge C_1$, where $C_1>0$ is a large enough absolute constant. Then there exits $\chi = \chi(p)$ that depends only on $p$ such that
\begin{equation*}
f(s,[N]) \lesssim \chi^2 \max_{i\le N}\|X_i\|^2+ \kappa^2\|\Sigma\|s\left(\frac{N}{s}\right)^{4/(4+p)}\log^4\frac{N}{s} + \chi^2p^2\kappa(p)^2\|\Sigma\|s\log^4\frac{N}{s}\left( \frac{N}{s}\right)^{2/p},
\end{equation*}
 with probability at least $1-N^{(4 - p)(\chi - 2)/(p + 4)} s^{-(p^2(\chi- 1) + 4p)/(2p + 8)} (\log s)^{p(\chi - 1) + 4} \mathbf{r}^2(\Sigma)-\chi/N^2$.
 \end{lemma}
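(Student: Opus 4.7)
The proof follows the chromatic decomposition strategy of Tikhomirov \cite{tikhomirov2018sample} but executes it through the dimension-free ingredients Proposition \ref{prop:coloringprop} and Lemma \ref{lem:individualcolor}. Fix a threshold $H>0$ (to be tuned at the end) and a target chromatic number $\chi$; by Proposition \ref{prop:coloringprop} applied with $m=\chi$, we get a partition $[N]=\mathcal{C}_1^H\cup\cdots\cup\mathcal{C}_\chi^H$ into color classes of $\mathcal{G}_H$, with a controlled failure probability.

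The first key step is a decomposition by color class. Given a unit $s$-sparse vector $y$, write $y=\sum_{m=1}^\chi y^{(m)}$ with $\supp(y^{(m)})\subseteq\mathcal{C}_m^H$ and set $\alpha_m=\|y^{(m)}\|_2$, so $\sum_m\alpha_m^2=1$. The triangle inequality combined with Cauchy--Schwarz in $\R^\chi$ gives
\[
\left\|\sum_{i=1}^N y_iX_i\right\|^2 \le \chi\sum_{m=1}^\chi \left\|\sum_{i\in\mathcal{C}_m^H}y_i^{(m)}X_i\right\|^2 \le \chi\sum_{m=1}^\chi\alpha_m^2\,f(s,\mathcal{C}_m^H) \le \chi\max_m f(s,\mathcal{C}_m^H),
\]
whence $f(s,[N])\le\chi\max_m f(s,\mathcal{C}_m^H)$. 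Applying Lemma \ref{lem:individualcolor} to each $\mathcal{C}_m^H$ and union-bounding over $m$ (at a cost of $\chi/N^2$ in probability) yields the self-bounding inequality
\[
f(s,[N]) \le C\chi\max_i\|X_i\|^2 + C\chi\log^2\tfrac{N}{s}\!\left(H\log s\,\max_i\|X_i\| + p\kappa\|\Sigma\|^{1/2}\sqrt{s}(N/s)^{1/p}\sqrt{f(s,[N])}\right),
\]
which is quadratic in $\sqrt{f(s,[N])}$. The standard resolution $f\le A+B\sqrt f\Rightarrow f\lesssim A+B^2$ then gives
\[
f(s,[N]) \lesssim \chi\max_i\|X_i\|^2 + \chi H\log s\log^2\tfrac{N}{s}\max_i\|X_i\| + \chi^2 p^2\kappa^2\|\Sigma\|s(N/s)^{2/p}\log^4\tfrac{N}{s}.
\]

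The middle cross term is $ab$ for $a=\chi\max_i\|X_i\|$ and $b=H\log s\log^2(N/s)$, and AM--GM bounds it by $(a^2+b^2)/2$, contributing $\chi^2\max_i\|X_i\|^2$ (which absorbs the first summand since $\chi\le\chi^2$) together with $H^2\log^2 s\log^4(N/s)$. We now choose $H$ so that the latter matches the desired $s(N/s)^{4/(p+4)}$-rate, namely
\[
H \;=\; c\,\kappa\,\|\Sigma\|^{1/2}\,\sqrt{s}\,(N/s)^{2/(p+4)}\,(\log s)^{-1},
\]
for which a direct check (using $\sqrt{s}(N/s)^{2/(p+4)}=s^{p/(2(p+4))}N^{2/(p+4)}$) gives $H^2\log^2 s\log^4(N/s)=c^2\kappa^2\|\Sigma\|s(N/s)^{4/(p+4)}\log^4(N/s)$. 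Substituting this $H$ into Proposition \ref{prop:coloringprop} with $m=\chi$, a routine manipulation of the exponents reduces the product $(\kappa(p)^pN\|\Sigma\|^{p/2}H^{-p})^{\chi-1}\cdot N\kappa(p)^4\tr(\Sigma)^2 H^{-4}$ to precisely $N^{(4-p)(\chi-2)/(p+4)}s^{-(p^2(\chi-1)+4p)/(2p+8)}(\log s)^{p(\chi-1)+4}\mathbf{r}^2(\Sigma)$, and adding the $\chi/N^2$ loss from the union bound recovers the stated probability.

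The principal obstacle is the tuning step: the threshold $H$ must be chosen so that simultaneously (i) the cross term after AM--GM matches the target $\kappa^2\|\Sigma\|s(N/s)^{4/(p+4)}\log^4(N/s)$, (ii) the failure probability of Proposition \ref{prop:coloringprop} factorizes into the specific product of powers of $N$, $s$, $\log s$ and $\mathbf{r}(\Sigma)$ required by the statement, and (iii) the smallness constraints on $\gamma$ and the condition $N\gtrsim\gamma^{-2}k_j$ behind Proposition \ref{prop:sparsification} (and hence Lemma \ref{lem:individualcolor}) remain valid, which is exactly why the hypothesis $\log(N/s)\ge C_1$ is imposed. Once this optimization is fixed, the remaining steps (chromatic decomposition, union bound, quadratic solve, AM--GM) are routine assembly of the already-established inputs.
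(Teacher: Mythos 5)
Your proposal is correct and follows essentially the same route as the paper: fix $H=\kappa\|\Sigma\|^{1/2}\sqrt{s}(N/s)^{2/(p+4)}/\log s$, combine the chromatic decomposition with Lemma \ref{lem:individualcolor} and Proposition \ref{prop:coloringprop}, solve the resulting quadratic in $\sqrt{f(s,[N])}$, balance the cross term by weighted AM--GM, and reduce the exponent bookkeeping for the failure probability. The one cosmetic difference is that the paper passes through $f(s,[N])\le\sum_m f(s,\mathcal{C}_m^H)$ (the sharper Cauchy--Schwarz, $\bigl(\sum_m\|y^{(m)}\|\sqrt{f(s,\mathcal{C}_m^H)}\bigr)^2\le\sum_m f(s,\mathcal{C}_m^H)$) rather than your $f(s,[N])\le\chi\max_m f(s,\mathcal{C}_m^H)$, but after applying Lemma \ref{lem:individualcolor} both yield the same $\chi\cdot(\text{Lemma \ref{lem:individualcolor} bound})$ and hence the same final estimate.
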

\begin{proof}
We define $\chi(\mathcal{G}_H)$ to be the chromatic number of the random graph induced by the partition of the sample $X_1,\ldots,X_N$ into the classes $\mathcal{C}_m^{H}$. We fix $H =\kappa\|\Sigma\|^{1/2}(\frac{N}{s})^{2/(4+p)} \frac{\sqrt{s}}{\log s}$, where $\kappa = \kappa(p)$ and apply Lemma \ref{lem:individualcolor} together with union bound to obtain that, with probability at least $1-\chi/N^2$,
\begin{align*}
\frac{1}{\chi}\sum_{m=1}^{\chi} f(s,\mathcal{C}_m^{H}) &\le \max_{i\le N}\|X_i\|^2
\\
&\quad+ C\log^2\frac{N}{s}\left(H\log s \max_{i\le N}\|X_i\| + p\kappa\|\Sigma\|^{1/2}\sqrt{s}\left( \frac{N}{s}\right)^{1/p} \sqrt{f(s,[N])}\right).
\end{align*}
We now apply the dimension-free version of the coloring Proposition \ref{prop:coloringprop} to obtain that the chromatic number $\chi(\mathcal{G}_H)$ is at most $\chi$, with probability at least \[
1-(\kappa^p\|\Sigma\|^{p/2}NH^{-p})^{\chi-1} N\kappa^4 \tr(\Sigma)^2 H^{-4}.
\]
Now, we estimate the latter probability. We have
\begin{align*}
&(\kappa^p \|\Sigma\|^{p/2} N H^{-p})^{\chi-1}N\kappa^4\tr(\Sigma)^2H^{-4} 
\\
&\qquad= (N^{1-2p/(p+4)}s^{2p/(p+4)-p/2})^{\chi-1} N^{1-8/(4+p)}s^{8/(4+p) - 2} (\log s)^{p(\chi - 1) + 4} \mathbf{r}^2(\Sigma)\\
&\qquad=N^{(4 - p)(\chi - 2)/(p + 4)} s^{-(p^2(\chi- 1) + 4p)/(2p + 8)} (\log s)^{p(\chi - 1) + 4}\mathbf{r}^2(\Sigma).
\end{align*}
Following exactly the lines of the proof of Proposition 5.3 in \cite{tikhomirov2018sample} we obtain
\begin{align*}
\chi^{-1}(\mathcal{G_H})f(s,[N]) &\le \chi^{-1}(\mathcal{G_H})\sum_{m=1}^{\chi(\mathcal{G}_H)} f(s,\mathcal{C}_m^{H}) \\
&\le\max_{i\le N}\|X_i\|^2+ C\kappa\|\Sigma\|^{1/2}\left(\frac{N}{s}\right)^{2/(4+p)} \sqrt{s}\log^2\frac{N}{s} \max_{i\le N}\|X_i\| 
\\
&\qquad+ Cp\kappa\|\Sigma\|\log^2\frac{N}{s}\sqrt{s}\left( \frac{N}{s}\right)^{1/p} \sqrt{f(s,[N])},
\end{align*}
with probability at least $1-N^{(4 - p)(\chi - 2)/(p + 4)} s^{-(p^2(\chi- 1) + 4p})/(2p + 8)(\log s)^{p(\chi - 1) + 4} \mathbf{r}^2(\Sigma)$. Now we solve the inequality above by applying the inequality $2ab \le \gamma a^2 + \frac{b^2}{\gamma}$ for $a, b, \gamma \ge 0$ twice and solving with respect to $f(s,[N])$.
\end{proof}

The following bound is the main result of this section.

\begin{theorem}
\label{thm:rearrengmentthm}
Assume that for some large enough absolute constant $c > 0$ it holds that $N \ge c\mathbf{r(\Sigma)}$.
For large enough sample size $N$, simultaneously for all integers $s$ satisfying $\mathbf{r}(\Sigma) \le s \le N/c$, with probability at least $1 - \frac{c(p)}{N}$, it holds that
\begin{equation}
\label{eq:boundonf}
f(s,[N]) \le C(p) \left( \max_{i\le N}\|X_i\|^2 + \|\Sigma\|s\left(\frac{N}{s}\right)^{4/(4+p)}\log^4\frac{N}{s} \right),
\end{equation}
where $C(p)$ and $c(p)$ depend only on $p$ and $\kappa(p)$.
\end{theorem}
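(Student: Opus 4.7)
The strategy is to apply Lemma~\ref{lem:upboundcolor} for each integer $s$ in the range $\mathbf{r}(\Sigma) \le s \le N/c$ with a carefully tuned value of $\chi = \chi(p)$, and then take a union bound over $s$. Beyond Lemma~\ref{lem:upboundcolor} only two things need to be done: (i) choose $\chi$ so that the per-$s$ failure probability is summable over $s$ while absorbing the $\mathbf{r}^2(\Sigma)$ factor, and (ii) absorb the third term in the lemma's bound into the second, using $p > 4$.

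\textbf{Choice of $\chi$ and union bound.} Since $p > 4$, the exponent $(4-p)(\chi-2)/(p+4)$ in the failure probability of Lemma~\ref{lem:upboundcolor} is strictly negative and decreases linearly in $\chi$. I would pick $\chi = \chi(p)$ large enough that, say, $(p-4)(\chi-2)/(p+4) > 4$. Bounding $\mathbf{r}^2(\Sigma) \le N^2$ together with the crude estimate $(\log s)^{p(\chi-1)+4} \le (\log N)^{O(1)}$, and summing the factor $s^{-(p^2(\chi-1)+4p)/(2p+8)} \le 1$ over at most $N$ values of $s$, the total failure probability after the union bound is at most $C(p)\,N^{-\epsilon}(\log N)^{O(1)} + \chi/N \le c(p)/N$ for some $\epsilon = \epsilon(p) > 0$ and for $N$ large enough. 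The condition $\log(N/s) \ge C_1$ required by Lemma~\ref{lem:upboundcolor} is guaranteed once the range constant satisfies $c \ge e^{C_1}$, and the assumption $N \ge c\mathbf{r}(\Sigma)$ ensures the range of $s$ is non-empty.

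\textbf{Simplification of the bound.} On the good event, Lemma~\ref{lem:upboundcolor} produces three terms. The $\chi(p)^2$ prefactor of $\max_i\|X_i\|^2$ is absorbed into $C(p)$, as is the $\kappa(p)^2$ prefactor of the second term. For the third term, $\chi^2 p^2 \kappa(p)^2 \|\Sigma\|\,s\,\log^4(N/s)\,(N/s)^{2/p}$, the strict inequality $p > 4$ yields $2/p < 4/(p+4)$, and since $N/s \ge c \ge 1$ we obtain $(N/s)^{2/p} \le (N/s)^{4/(p+4)}$. The remaining prefactor $\chi^2 p^2 \kappa(p)^2$ is then absorbed into $C(p)$, so this term is dominated by the second and can be dropped.

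\textbf{Main obstacle.} The substance of the proof has already been paid for in Lemma~\ref{lem:upboundcolor} and the preceding dimension-free sparsification (Proposition~\ref{prop:gfuncupperbound}) and coloring (Proposition~\ref{prop:coloringprop}) arguments. What remains here is a bookkeeping exercise: the only nontrivial point is verifying that the negative power of $N$ gained by increasing $\chi$ dominates both the $\mathbf{r}^2(\Sigma) \le N^2$ factor and the polylogarithmic factors, which is immediate because the gain scales linearly in $\chi$ while the opposing factors are fixed polynomials in $N$.
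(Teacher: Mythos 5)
Your proposal is correct and follows essentially the same strategy as the paper: apply Lemma~\ref{lem:upboundcolor} with $\chi=\chi(p)$ chosen large enough to make the per-$s$ failure probability summable, union bound over $s$, and absorb the third term of the lemma's bound into the second via $(N/s)^{2/p}\le (N/s)^{4/(p+4)}$ (valid since $p>4$ and $N/s\ge 1$). The only cosmetic difference is in the bookkeeping for the failure probability: the paper absorbs $\mathbf{r}^2(\Sigma)$ using $s\ge\mathbf{r}(\Sigma)$ and kills the $(\log s)^{p(\chi-1)+4}$ factor by $\log s\le s^{1/3}$ (splitting on whether $\mathbf{r}(\Sigma)\le 100$), which lets it use the tighter $\chi=\max(10,4p/(p-4))$, whereas you take $\chi$ a bit larger and pay with $\mathbf{r}^2(\Sigma)\le N^2$ and a residual $(\log N)^{O(1)}$ that is then dominated for $N$ large enough — both are fine since $C(p)$ and $c(p)$ are allowed to depend on $p$ and the theorem is stated for large $N$.
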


\begin{proof}
The proof is based on the application of Lemma \ref{lem:upboundcolor}.
Let $s \ge \mathbf{r}(\Sigma)$ and fix $\chi = \max(10,\frac{4p}{p-4})$. Consider two cases:
\begin{itemize}
\item If $\mathbf{r}(\Sigma) \le 100$, we have
\[
N^{(4 - p)(\chi - 2)/(p + 4)} s^{(p^2(\chi- 1) +4p)/(2p + 8)}(\log s)^{p(\chi - 1) + 4} \mathbf{r}^2(\Sigma) \le c(p)N^{-2}.
\]
\item Otherwise, if $\mathbf{r}(\Sigma) > 100$, then we have $s \ge 100$ and $\log s \le s^{1/3}$. Therefore,
\[
N^{(4 - p)(\chi - 2)/(p + 4)} s^{-(p^2(\chi- 1) + 4p)/(2p + 8) + (p(\chi - 1) + 4)/3} \mathbf{r}^2(\Sigma)  \le s^{-2}\mathbf{r}^2(\Sigma)N^{-2} \le N^{-2}.
\]
\end{itemize}
Therefore, for a fixed integer $s$ satisfying $\mathbf{r}(\Sigma) \le s \le N/c$, we have that $\log \frac{N}{s}$ is sufficiently large so that, by Lemma \ref{lem:upboundcolor} and the above calculations, with probability at least $1 - \frac{c(p)}{N^2}$,
\[
f(s,[N]) \le C(p) \left( \max_{i\le N}\|X_i\|^2 + \|\Sigma\|s\left(\frac{N}{s}\right)^{4/(4+p)}\log^4\frac{N}{s} \right).
\]
By taking the union bound with respect to the value of $s$ we conclude the proof.
\end{proof}

\subsection{Analysis of the spread part}
The main tool of this section is the following lemma sometimes called the PAC-Bayesian inequality (see \cite[Proposition 2.1]{catoni2017dimension} or \cite{appert2021new} for a detailed proof taking care of measurability questions). In our proofs, we adapt several computations appearing in \cite{zhivotovskiy2021dimension}. We also refer to \cite{zhivotovskiy2021dimension} for a detailed exposition of related results.
\begin{lemma}
\label{lem:pacbayes}
Assume that $X_i$, $i = 1, \ldots, N$ are i.i.d. random variables defined on some measurable space. Assume also that $\Theta$ (called the parameter space) is a subset of $\mathbb{R}^d$ for some $d \ge 1$. Let $\mu$ be a distribution (called prior) on $\Theta$ and let $\rho$ be any distribution (called posterior) on $\Theta$ such that $\rho \ll \mu$. Then, simultaneously for any such $\rho$ we have, with probability at least $1 - \exp(-t)$,
\[
\frac{1}{N}\sum\limits_{i = 1}^N\E_{\rho}f(X_i, \theta) \le \E_{\rho}\log(\E_X\exp(f(X, \theta))) + \frac{\mathcal{KL}(\rho, \mu) + t}{N}.
\]
Here $\theta$ is distributed according to $\rho$. Moreover, 
\[
\E\sup\limits_{\rho}\left(\sum\limits_{i = 1}^N\E_{\rho}f(X_i, \theta) - N\E_{\rho}\log(\E_X\exp(f(X, \theta))) - \mathcal{KL}(\rho, \mu)\right) \le 0.
\]
\end{lemma}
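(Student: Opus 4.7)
The plan is to reduce both statements to the Donsker-Varadhan variational inequality applied to a suitably centered log-likelihood. Define
$$h_N(\theta) = \sum_{i=1}^N f(X_i,\theta) - N\log \E_X \exp(f(X,\theta)).$$
Donsker-Varadhan gives that, for any measurable $h : \Theta \to \R$ and any $\rho \ll \mu$,
$$\E_\rho h(\theta) \le \mathcal{KL}(\rho,\mu) + \log \E_\mu \exp(h(\theta)),$$
with equality (taking the supremum over $\rho$) attained at the Gibbs tilt $d\rho_\star \propto e^{h}\, d\mu$. Specializing $h = h_N$ and rearranging, both claims reduce to controlling the \emph{random} exponential moment $\E_\mu \exp(h_N(\theta))$ — in probability for the first bound, in expectation for the second.

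The key calculation is that this moment has mean one. For each fixed $\theta$,
$$\exp(h_N(\theta)) = \prod_{i=1}^N \frac{\exp(f(X_i,\theta))}{\E_X \exp(f(X,\theta))}$$
is a product of i.i.d. nonnegative random variables, each with unit expectation by the very choice of normalization, so $\E \exp(h_N(\theta)) = 1$ for every $\theta$. Fubini's theorem then yields $\E \E_\mu \exp(h_N(\theta)) = 1$.

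From here the two claims follow by separate applications of Markov and Jensen. For the high-probability bound, Markov's inequality on the nonnegative random variable $\E_\mu \exp(h_N(\theta))$ gives $\E_\mu \exp(h_N(\theta)) \le e^t$ with probability at least $1 - e^{-t}$; since the right-hand side of Donsker-Varadhan does not depend on $\rho$, on this event the inequality holds simultaneously for all $\rho$, and dividing by $N$ produces the stated bound. For the in-expectation bound, take the supremum over $\rho$ on the left of Donsker-Varadhan: by the equality case it equals $\log \E_\mu \exp(h_N(\theta))$, and Jensen's inequality gives
$$\E \sup_\rho \left( \E_\rho h_N(\theta) - \mathcal{KL}(\rho,\mu) \right) = \E \log \E_\mu \exp(h_N(\theta)) \le \log \E \E_\mu \exp(h_N(\theta)) = 0,$$
which is the second claim after unwinding $h_N$. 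The only real subtlety is measurability of $\sup_\rho$, and this is what forces the detour through the references cited in the statement: one restricts to a Polish parametrization of admissible posteriors so that the supremum can be realized along a countable sequence.
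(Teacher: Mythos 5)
Your proof is correct and is exactly the standard PAC-Bayes derivation that the cited references (Catoni–Giulini, Proposition 2.1, and the follow-up reference handling measurability) carry out: Donsker–Varadhan for the centered log-likelihood $h_N$, the observation that $\exp(h_N(\theta))$ is a unit-mean product by independence of the $X_i$ combined with Fubini to get $\E\,\E_\mu\exp(h_N(\theta))=1$, then Markov for the high-probability statement and Jensen with the equality case of Donsker–Varadhan for the in-expectation statement. The paper itself gives no proof of this lemma, only citations, so there is nothing in the paper to diverge from; your argument is the intended one and your remark on the measurability of $\sup_\rho$ correctly locates the only technical point that the references spell out in detail.
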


Our analysis will also exploit the following elementary relation.

\begin{lemma}[Lemma 4 in \cite{zhivotovskiy2021dimension}]
\label{lem:almostconvex}
Let the truncation function $\psi$ be given by \eqref{eq:truncfunction} and let $Z$ be a square integrable random variable. We have
\[
\psi(\E Z) \le \E\log(1 + Z + Z^2) + \min\{1, \E Z^2/6\}.
\]
Moreover for any $a > 0$, it holds that
\[
 \E\log(1 + Z + Z^2) + a\E\min\{1, Z^2/6\} \le \E\log\left(1 + Z + \left(1 + \frac{(7 + \sqrt{6})(\exp(a) - 1)}{6}\right) Z^2\right).
\]
\end{lemma}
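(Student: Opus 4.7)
My plan is to reduce both inequalities to pointwise estimates for scalars $x$ that can then be integrated against the law of $Z$.

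For the first inequality, the engine is the pointwise bound
\[
\log(1 + x + x^2) \ge x - \frac{x^2}{6}, \qquad x \in \R,
\]
which I would obtain by setting $f(x) = \log(1+x+x^2) - x + x^2/6$, verifying $f(0) = f'(0) = 0$, and observing the algebraic identity
\[
f''(x) = \frac{1 - 2x - 2x^2}{(1+x+x^2)^2} + \frac{1}{3} = \frac{(x-1)^2 (x+2)^2}{3 (1+x+x^2)^2} \ge 0,
\]
so that $f$ is convex with minimum value $0$ at the origin. Taking expectations gives $\E \log(1+Z+Z^2) \ge \E Z - \E Z^2/6$. From here I would split on the size of $\E Z^2$. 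When $\E Z^2 \le 6$, the minimum equals $\E Z^2/6$, and the bound above together with $\psi(\E Z) \le \max(\E Z, -1)$ yields the claim; the residual case $\E Z < -1$ is handled by the universal estimate $\log(1+x+x^2) \ge \log(3/4) > -1$. When $\E Z^2 > 6$ the minimum equals $1$, and since $\psi(\E Z) \le 1$ it suffices to show $\E \log(1+Z+Z^2) \ge 0$; this is where I expect to invoke nonnegativity of $Z$ (the case of interest in applications, where $Z$ is a squared linear functional $\lambda \langle X, v\rangle^2$), giving $1 + Z + Z^2 \ge 1$ and hence $\E \log \ge 0$.

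For the second inequality, I would prove the stronger pointwise claim
\[
(1 + x + x^2) \exp\bigl( a \min\{1, x^2/6 \}\bigr) \le 1 + x + \Bigl(1 + \tfrac{(7 + \sqrt 6)(e^a - 1)}{6}\Bigr) x^2,
\]
from which the lemma follows directly by taking logarithms and then expectations (no Jensen step is required). The proof splits at $|x| = \sqrt 6$. On $\{|x| \le \sqrt 6\}$, the minimum equals $x^2/6$, and applying the chord bound $e^y - 1 \le y(e^a - 1)/a$ for $y \in [0,a]$ (valid by convexity of $y \mapsto e^y$) to $y = a x^2/6$ reduces the inequality, after cancellation, to $x + x^2 \le 6 + \sqrt 6$, which holds on $[-\sqrt 6, \sqrt 6]$ with equality at $x = \sqrt 6$. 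On $\{|x| \ge \sqrt 6\}$ the minimum equals $1$, and the inequality collapses to $(1 + \sqrt 6) x^2 / 6 \ge 1 + x$, which is tight at $x = \sqrt 6$ and then holds by comparison of derivatives (and is trivial for $x \le -\sqrt 6$, where the right-hand side is negative).

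The main obstacle is the specific constant $(7 + \sqrt 6)/6$: it is forced by requiring the pointwise bound in both regimes to meet with equality at the threshold $|x| = \sqrt 6$, which is in turn dictated by the kink of $\min\{1, x^2/6\}$. Once that matching is imposed, the rest of the argument is routine polynomial manipulation. In contrast, the first inequality's proof is mostly a case analysis around the pointwise identity $(x-1)^2(x+2)^2 \ge 0$ and needs only modest care at the boundary $\psi(\E Z) = \pm 1$.
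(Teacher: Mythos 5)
Your argument for the second inequality is correct. The proposed pointwise claim
\[
(1+x+x^2)\exp\bigl(a\min\{1, x^2/6\}\bigr) \le 1 + x + \Bigl(1 + \tfrac{(7+\sqrt 6)(e^a-1)}{6}\Bigr)x^2
\]
does hold: the chord bound reduces the regime $|x|\le\sqrt 6$ to $1+x+x^2\le 7+\sqrt 6$, the regime $|x|\ge\sqrt 6$ reduces to $(1+\sqrt 6)x^2/6\ge 1+x$, both are tight at $x=\sqrt 6$, and taking logarithms and then expectations gives the stated display with no Jensen step.

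For the first inequality there is a genuine gap in the regime $\E Z^2>6$. The pointwise bound $\log(1+x+x^2)\ge x-x^2/6$ and the factorization of $f''$ are correct, and your treatment of the cases $\E Z^2\le 6$ and $\E Z<-1$ is fine. But when $\E Z^2>6$ you reduce the claim to $\E\log(1+Z+Z^2)\ge 0$ and then invoke nonnegativity of $Z$. That assumption is not available: the lemma only assumes square integrability, and in the paper's actual use of it (proof of Proposition~\ref{prop:trimmedprocess}) the role of $Z$ is played by $\lambda\langle X,\theta\rangle\langle X,\nu\rangle$ with $(\theta,\nu)$ the PAC-Bayes posterior variables, a product of two \emph{distinct} Gaussian linear forms; it is only $\E_{\rho_v}Z=\lambda\langle X,v\rangle^2$ that is nonnegative, not $Z$ itself, so you have misidentified which object is the $Z$. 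Moreover, without a sign restriction $\E\log(1+Z+Z^2)\ge 0$ simply does not follow from $\E Z^2>6$: take $Z=-1/2$ with probability $1-p$ and $Z=3/(2p)$ with probability $p$. Then $\E Z=1+p/2\ge 1$ so $\psi(\E Z)=1$, $\E Z^2=(1-p)/4+9/(4p)\to\infty$ as $p\to0$, while
\[
\E\log(1+Z+Z^2)=(1-p)\log\tfrac34+p\log\Bigl(1+\tfrac{3}{2p}+\tfrac{9}{4p^2}\Bigr)\longrightarrow\log\tfrac34<0.
\]
So your sufficient condition fails, and the case $\E Z^2>6$ requires a different mechanism. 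Note also that in this example the right-hand side of the first inequality tends to $1+\log(3/4)\approx 0.71$ while the left-hand side equals $1$, so before attempting to close the gap you should verify the exact statement and hypotheses of Lemma~4 in \cite{zhivotovskiy2021dimension}; the version transcribed here may be missing a constraint that the cited source imposes.
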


The main result of this section is the following.

\begin{proposition}
\label{prop:trimmedprocess}
Assume that $Y$ is a zero mean random vector with covariance $\Sigma$ satisfying the $L_p-L_2$ \emph{norm equivalence} with $p \ge 4$. Let $Y_1, \ldots, Y_N$ be a sample of independent copies of $Y$. Consider the truncated vectors $X_i = Y_i\ind{\|Y_i\| \le R}$ for $i = 1, \ldots, N$ and some $R > 0$. For a fixed truncation level $\lambda > 0$, it holds that
\[
\E\sup_{v\in S^{d-1}}\left|\frac{1}{N\lambda}\sum_{i=1}^N \psi(\lambda\langle X_i,v\rangle^2) - \E\langle X,v\rangle^2 \right|
\lesssim \frac{\mathbf{r}(\Sigma)}{\lambda N} + \lambda\kappa^{4}\|\Sigma\|^2,
\]
where $\kappa = \kappa(4)$. In particular, when $\lambda = \frac{1}{\kappa^2\|\Sigma\|} \sqrt{\frac{\mathbf{r}(\Sigma)}{N}}$, we have
\[
\E\sup_{v\in S^{d-1}}\left|\frac{1}{N\lambda}\sum_{i=1}^N \psi(\lambda\langle X_i,v\rangle^2) - \E\langle X,v\rangle^2 \right|
\lesssim \kappa^2\|\Sigma\|\sqrt{\frac{\mathbf{r}(\Sigma)}{N}}.
\]
\end{proposition}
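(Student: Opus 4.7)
My plan is to apply the in-expectation assertion of the PAC-Bayesian Lemma \ref{lem:pacbayes} with Gaussian prior $\mu = \mathcal{N}(0, \beta^{-1} I_d)$ and the family of posteriors $\rho_v = \mathcal{N}(v, \beta^{-1} I_d)$ indexed by $v \in S^{d - 1}$, with the choice $\beta = \mathbf{r}(\Sigma)$. This family gives $\mathcal{KL}(\rho_v, \mu) = \beta\|v\|^2/2 = \mathbf{r}(\Sigma)/2$ uniformly in $v$, which after dividing by $N\lambda$ produces exactly the desired $\mathbf{r}(\Sigma)/(2N\lambda)$ term. The key trick---needed because the integrand $\langle X, \theta\rangle^2$ is quadratic in $\theta$, unlike in mean estimation---is to apply Lemma \ref{lem:almostconvex} not to $\lambda \langle X_i, \theta\rangle^2$ directly but to the \emph{bias-corrected} random variable
\[
Z(X, \theta) := \lambda\bigl(\langle X, \theta\rangle^2 - \|X\|^2/\beta\bigr),
\]
which satisfies $\E_{\rho_v} Z(X_i, \theta) = \lambda \langle X_i, v\rangle^2$ exactly, since $\E_{\rho_v}\theta\theta^\top = vv^\top + \beta^{-1} I_d$. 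Combining both assertions of Lemma \ref{lem:almostconvex} (with $a = 1$) and the odd symmetry $\psi(-x) = -\psi(x)$ then yields the two-sided inequalities
\[
\pm \psi(\lambda \langle X_i, v\rangle^2) \le \E_{\rho_v} \log\bigl(1 \pm Z(X_i, \theta) + C_0 Z(X_i, \theta)^2\bigr)
\]
for some absolute constant $C_0$.

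Next I would apply the in-expectation form of Lemma \ref{lem:pacbayes} with $f_\pm(X, \theta) := \log(1 \pm Z(X, \theta) + C_0 Z(X, \theta)^2)$, taking the supremum over $\rho \in \{\rho_v : v \in S^{d - 1}\}$. Using $\log(1 + y) \le y$ gives
\[
\log \E_X \exp f_\pm(X, \theta) \le \pm \lambda \bigl(\theta^\top \Sigma_X \theta - \tr(\Sigma_X)/\beta\bigr) + C_0 \lambda^2 \E_X \bigl(\langle X, \theta\rangle^2 - \|X\|^2/\beta\bigr)^2,
\]
where $\Sigma_X = \E[XX^\top]$. The decisive step is that, upon integrating the linear piece against $\rho_v$, the posterior bias $\tr(\Sigma_X)/\beta$ hidden in $\E_{\rho_v}\theta^\top \Sigma_X \theta = v^\top \Sigma_X v + \tr(\Sigma_X)/\beta$ cancels \emph{exactly} with the shift $-\tr(\Sigma_X)/\beta$, leaving precisely $\pm \lambda\, v^\top \Sigma_X v = \pm \lambda\, \E \langle X, v\rangle^2$.

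For the quadratic piece I would bound
\[
\E_X (\langle X, \theta\rangle^2 - \|X\|^2/\beta)^2 \le \E_X \langle X, \theta\rangle^4 + \E_X \|X\|^4/\beta^2 \le \kappa^4 (\theta^\top \Sigma \theta)^2 + \kappa^4 \tr(\Sigma)^2/\beta^2
\]
via $L_4$-$L_2$ hypercontractivity and the bound $\E \|X\|^4 \le \kappa^4 \tr(\Sigma)^2$ that appears in the proof of Lemma \ref{lem:normofthevec}. A routine Gaussian moment calculation with $\theta = v + \beta^{-1/2} g$ then gives $\E_{\rho_v}(\theta^\top \Sigma \theta)^2 \lesssim \|\Sigma\|^2 + \|\Sigma\|\tr(\Sigma)/\beta + \tr(\Sigma)^2/\beta^2$; with $\beta = \mathbf{r}(\Sigma)$ each summand, as well as $\tr(\Sigma)^2/\beta^2$, is $O(\|\Sigma\|^2)$, so the whole quadratic contribution is $O(\kappa^4 \lambda \|\Sigma\|^2)$ after the overall division by $\lambda$. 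Summing the $f_+$ and $f_-$ bounds and combining with the KL term yields the stated estimate $\lesssim \mathbf{r}(\Sigma)/(N\lambda) + \kappa^4 \lambda \|\Sigma\|^2$, and plugging in $\lambda = (\kappa^2 \|\Sigma\|)^{-1} \sqrt{\mathbf{r}(\Sigma)/N}$ balances the two terms to give the second displayed bound. The hard part is the design of the shift $\|X\|^2/\beta$: without it, the irreducible $\tr(\Sigma)/\beta$ posterior bias can at best be balanced against $\beta/(N\lambda)$, yielding only $\sqrt{\tr(\Sigma)/(N\lambda)}$, which is strictly weaker than the desired $\mathbf{r}(\Sigma)/(N\lambda)$ in the regime $N \gg \mathbf{r}(\Sigma)$ where the proposition is of interest.
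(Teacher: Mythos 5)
Your overall architecture is the same as the paper's---PAC-Bayes with Gaussian prior $\mathcal{N}(0,\beta^{-1}I_d)$ and translated Gaussian posteriors, $\beta=\mathbf{r}(\Sigma)$, linearization via $\log(1+y)\le y$, and the moment bound $\E\|X\|^4\le\kappa^4\tr(\Sigma)^2$ from Lemma~\ref{lem:normofthevec}. Your parametrization is slightly different: you use a single Gaussian $\theta$ and subtract the bias $\|X\|^2/\beta$ so that $\E_{\rho_v}Z=\lambda\langle X,v\rangle^2$, whereas the paper works on $\Theta=(\mathbb{R}^d)^2$ with a product of two independent Gaussians $\theta,\nu$ so that $\E_{\rho_v}\langle X,\theta\rangle\langle X,\nu\rangle=\langle X,v\rangle^2$ automatically; these two devices serve the same purpose. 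Most of your downstream computations (the cancellation of $\tr(\Sigma_X)/\beta$ in the linear term, the bound $\E_{\rho_v}(\theta^\top\Sigma\theta)^2\lesssim\|\Sigma\|^2$ once $\beta=\mathbf{r}(\Sigma)$, the final balancing of $\lambda$) are sound.

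However, there is a genuine gap in the step where you write ``Combining both assertions of Lemma~\ref{lem:almostconvex} (with $a=1$) \ldots then yields $\pm\psi(\lambda\langle X_i,v\rangle^2)\le\E_{\rho_v}\log(1\pm Z+C_0Z^2)$.'' The first assertion of Lemma~\ref{lem:almostconvex} produces the remainder $\min\{1,\E_{\rho_v}Z^2/6\}$, while the second assertion can only absorb a term of the form $a\,\E_{\rho_v}\min\{1,Z^2/6\}$. These are not comparable in the direction you need: the map $y\mapsto\min\{1,y/6\}$ is concave, so by Jensen
\[
\min\{1,\E_{\rho_v}Z^2/6\}\ \ge\ \E_{\rho_v}\min\{1,Z^2/6\},
\]
with the ratio potentially unbounded (take $Z$ concentrated near $0$ with a small-probability large spike). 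So the chaining with $a=1$ does not go through, and no choice of $a$ alone fixes it. The paper closes exactly this gap with an anti-concentration argument: it first splits $\min\{1,\lambda^2\E_{\rho_v}(\langle X,\theta\rangle\langle X,\nu\rangle)^2/6\}\le\min\{1,2\lambda^2\langle X,v\rangle^4/6\}+\min\{1,2\lambda^2\beta^{-2}\|X\|^4/6\}$, then uses that $\langle X,\theta\rangle$ and $\langle X,\nu\rangle$ are independent Gaussians centered at $\langle X,v\rangle$, so that $\Pr_{\rho_v}\bigl((\langle X,\theta\rangle\langle X,\nu\rangle)^2\ge\langle X,v\rangle^4\bigr)\ge 1/4$, whence the first piece is $\le 8\,\E_{\rho_v}\min\{1,\lambda^2(\langle X,\theta\rangle\langle X,\nu\rangle)^2/6\}$ and can be fed into the second assertion of Lemma~\ref{lem:almostconvex} with $a=8$; the second piece is $v$-independent, so it is pulled outside the supremum and controlled in expectation by $\lambda^2\kappa^4\|\Sigma\|^2/3$ via $\E\|X\|^4\le\kappa^4\tr(\Sigma)^2$. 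Your proposal omits this step. It is plausible that an analogous anti-concentration bound holds for your single-Gaussian $Z=\lambda(\langle X,\theta\rangle^2-\|X\|^2/\beta)$ (conditionally on $X$, $\langle X,\theta\rangle$ is Gaussian, so $\Pr(|Z|\ge|\E_{\rho_v}Z|)$ is bounded below by an absolute constant), but that argument, together with the decomposition that isolates the $v$-independent $\|X\|^4$ part, must be supplied; without it the proof is incomplete.
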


\begin{proof}
Our aim is to choose the distributions $\mu$ and $\rho$ in Lemma \ref{lem:pacbayes}. Let 
$
\Theta = (\mathbb{R}^d)^2.
$
We choose $\mu$ to be a product of two zero mean multivariate Gaussians with mean zero and covariance $\beta^{-1}I_d$. For $v \in S^{d - 1}$, let $\rho_{v}$ be a product of two multivariate Gaussian distribution with mean $v$ and covariance $\beta^{-1}I_d$. Because of this, if $(\theta, \nu)$ is distributed according to $\rho_{v}$, we have $\E_{\rho_{v}}(\theta, \nu) = (v, v)$. By the additivity of $\mathcal{KL}$-divergence for product measures and the standard formula, we have
\[
\mathcal{KL}(\rho_{v}, \mu) = \beta.
\]
By the first part of Lemma \ref{lem:almostconvex} we have, 
\begin{align}
\psi\left(\lambda\langle X,  v\rangle^2\right) &=\psi\left(\lambda\E_{\rho_{v}}\langle X, \theta\rangle\langle X, \nu\rangle\right) \nonumber
\\
&\le \E_{\rho_{v}}\log\left(1 + \lambda\langle X, \theta\rangle\langle X, \nu\rangle  + \lambda^2\left(\langle X, \theta\rangle\langle X, \nu\rangle\right)^2 \right) \nonumber
\\
&\qquad+ \min\{1, \lambda^{2}\E_{\rho_v}\left(\langle X, \theta\rangle\langle X, \nu\rangle\right)^2/6\}.
\label{eq:twoparts}
\end{align}
Observe that 
\begin{equation}
\label{eq:forthnorm}
\E_{\rho_v}\left(\langle X, \theta\rangle\langle X, \nu\rangle\right)^2 = (\langle X, v\rangle^2 + \beta^{-1}\|X\|^2)^2 \le 2\langle X, v\rangle^4 + 2\beta^{-2}\|X\|^4,
\end{equation}
and we can write
\[
\min\{1, \lambda^{2}\E_{\rho_v}\left(\langle X, \theta\rangle\langle X, \nu\rangle\right)^2/6\} \le \min\{1, 2\lambda^2\langle X, v\rangle^4/6\} + \min\{1, 2\lambda^2\beta^{-2}\|X\|^4/6\}.
\]
Conditionally on $X$ the distribution of $\langle X, \theta\rangle$ is Gaussian with mean $\langle X, v\rangle$. Since it is symmetric, we have that $\Pr_{\rho_v}\left((\langle X, \theta\rangle)^2(\langle X, \nu\rangle)^2 \ge \langle X, v\rangle^4\right) \ge \frac{1}{4}$ and this holds trivially when $X = 0$. Therefore,
\[
\min\{1, 2\lambda^2\langle X, v\rangle^4/6\} \le 8\E_{\rho_v}\min\{1, \lambda^{2}\left(\langle X, \theta\rangle\langle X, \nu\rangle\right)^2/6\}.
\]
By the second part of Lemma \ref{lem:almostconvex}, we have for some absolute constant $c > 0$,
\begin{align*}
&\E_{\rho_{v}}\log\left(1 + \lambda\langle X, \theta\rangle\langle X, \nu\rangle  + \lambda^2\left(\langle X, \theta\rangle\langle X, \nu\rangle\right)^2 \right) + 8\E_{\rho_v}\min\{1, \lambda^{2}\left(\langle X, \theta\rangle\langle X, \nu\rangle\right)^2/6\}
\\
&\qquad\le \E_{\rho_{v}}\log\left(1 + \lambda\langle X, \theta\rangle\langle X, \nu\rangle  + c\lambda^2\left(\langle X, \theta\rangle\langle X, \nu\rangle\right)^2 \right).
\end{align*} Following the proof of Lemma \ref{lem:normofthevec} we get
\begin{equation}
    \label{eq:normforth}
    \E\|X\|^4 \le  \E\|Y\|^4 \le \kappa^4(\tr(\Sigma))^2 \quad \text{and} \quad \E\langle X, v\rangle^2 \le \E\langle Y, v\rangle^2 \le \|\Sigma\|,
\end{equation}
where $v \in S^{d - 1}$.
Using $\log(1 + y) \le y$ for $y \ge - 1$ and Fubini's theorem,  we have
\begin{align*}
&\E_{\rho_{v}}\log\E\left(1 + \lambda\langle X, \theta\rangle\langle X, \nu\rangle  + c\lambda^2\left(\langle X, \theta\rangle\langle X, \nu\rangle\right)^2 \right)
\\
&\le \lambda \E_{\rho_{v}}\E\langle X, \theta\rangle\langle X, \nu\rangle + c\lambda^2\E\E_{\rho_v}\langle X, \theta\rangle^2\langle X, \nu\rangle^2 
\\
&\le \lambda \E\langle X, v\rangle^2 + 2c\lambda^2(\E\langle X, v\rangle^4 + \E\beta^{-2}\|X\|^4) \quad \text{(by \eqref{eq:forthnorm})}
\\
&\le \lambda \E\langle X, v\rangle^2+ 2c\lambda^2\kappa^{4}((\E\langle X, v\rangle^2)^2 + \beta^{-2}(\tr(\Sigma))^2) \quad \text{(by \eqref{eq:normforth})}
\\
&\le \lambda \E\langle X, v\rangle^2 + 2c\lambda^2\kappa^{4}(\|\Sigma\|^2 + \beta^{-2}(\tr(\Sigma))^2).
\end{align*}
We plug 
\[
f(X, \theta, \nu) = \log\left(1 + \lambda\langle X, \theta\rangle\langle X, \nu\rangle  + c\lambda^2\left(\langle X, \theta\rangle\langle X, \nu\rangle\right)^2 \right)
\]
in the second part of Lemma \ref{lem:pacbayes}. Choosing $\beta = \mathbf{r}(\Sigma)$ and dividing both sides by $N$, we have
\begin{equation}
\label{eq:expectedsuptruncated}
\E\sup\limits_{v \in S^{d - 1} \cup \{0\}}\left(\frac{1}{N}\sum\limits_{i = 1}^N \E_{\rho_v}f\left(X_i, \theta, \nu\right) - \lambda\E\langle X, v\rangle^2\right) \le \frac{\mathbf{r}(\Sigma)}{ N} + 4c\lambda^2\kappa^{4}\|\Sigma\|^2,
\end{equation}
where we added the $0$ vector by considering $\mu$ as a posterior distribution and observing that $0 = \mathcal{KL}(\mu, \mu) \le \beta$. By adding the $0$ vector we guarantee that the supremum in \eqref{eq:expectedsuptruncated} is always non-negative. Adding the term
\[
\frac{1}{N}\E \sum\nolimits_{i = 1}^N\min\{1, 2\lambda^2(\mathbf{r}(\Sigma))^{-2}\|X_i\|^4/6\} \le \lambda^2 \frac{\kappa^4\|\Sigma\|^2}{3},
\]
to the inequality \eqref{eq:expectedsuptruncated} and using the derivations in the beginning of the proof, we have
\[
\E\sup\limits_{v \in S^{d - 1} \cup \{0\}}\left(\frac{1}{N}\sum\limits_{i = 1}^N\psi(\lambda\langle X,  v\rangle^2) - \lambda\E\langle X, v\rangle^2\right) \le \frac{\mathbf{r}(\Sigma)}{N} + \left(4c + \frac{1}{3}\right)\lambda^2\kappa^{4}\|\Sigma\|^2.
\]
The same argument works for $-\lambda$ instead. This leads to
\[
\E\sup\limits_{v \in S^{d - 1}}\left|\frac{1}{\lambda N}\sum\limits_{i = 1}^N\psi(\lambda\langle X,  v\rangle^2) - \E\langle X, v\rangle^2 \right| \le 2\frac{\mathbf{r}(\Sigma)}{\lambda N} + 2\left(4c + \frac{1}{3}\right)\lambda\kappa^{4}\|\Sigma\|^2.
\]
Our choice of $\lambda$ concludes the proof.
\end{proof} 
\subsection{Combining peaky and spread parts}
We conclude the proof of Theorem \ref{thm:baiyin}. Using our decomposition we have
\begin{align*}
\E\sup_{v\in S^{d-1}} \left| \frac{1}{N}\sum_{i=1}^N \langle X_i,v\rangle^2 - \E\langle X,v\rangle^2\right| &\le \E\sup_{v\in S^{d-1}}\left|\frac{1}{N\lambda}\sum_{i=1}^N \psi(\lambda\langle X_i,v\rangle^2) - \E\langle X,v\rangle^2 \right| 
\\
&\qquad+ \E\sup_{v\in S^{d-1}}\frac{1}{N}\sum_{i=1}^N \langle X_i,v\rangle^2 \ind{\lambda\langle X_i,v\rangle^2>1}.
\end{align*}
We choose $\lambda = \frac{1}{\kappa(4)^2\|\Sigma\|} \sqrt{\frac{\mathbf{r}(\Sigma)}{N}}$. By Proposition \ref{prop:trimmedprocess} we have
\[
\E\sup_{v\in S^{d-1}}\left|\frac{1}{N\lambda}\sum_{i=1}^N \psi(\lambda\langle X_i,v\rangle^2) - \E\langle X,v\rangle^2 \right| \lesssim \kappa(4)^2\|\Sigma\|\sqrt{\frac{\mathbf{r}(\Sigma)}{N}}.
\]
We proceed with the remaining term. Define the random set $I_v = \{i\in [N]: \langle X_i,v\rangle^2> \lambda^{-1}\}$. Let $m =\sup\limits_{v \in S^{d - 1}}|I_v|$. By \eqref{eq:relationfanda} we have
\begin{equation}
\label{eq:twoineqs}
\frac{m}{N\lambda} \le \sup_{v\in S^{d-1}}\frac{1}{N}\sum\nolimits_{i=1}^N \langle X_i,v\rangle^2 \ind{\lambda\langle X_i,v\rangle^2>1} \le \frac{1}{N}f(m, [N]).
\end{equation}
Observe that for any $p > 4$, there is $C(p) > 0$ such that for all $x \ge 1$,
\[
x^{1 - 4/(4+p)}\log^4 x \le C(p)\sqrt{x}.
\]
We now want to apply Theorem \ref{thm:rearrengmentthm} with the (random) value $m$. First, we assume that $\mathbf{r}(\Sigma) \le m \le N/c$.
In this case, by Theorem \ref{thm:rearrengmentthm}, with probability at least $1-\frac{c(p)}{N}$, it holds that
\begin{equation*}
\frac{1}{N}f(m, [N]) \le C(p)\left(\frac{1}{N}\max_{i\le N}\|X_i\|^2 + \|\Sigma\| \left(\frac{m}{N}\right)^{1/2}\right).
\end{equation*}
By \eqref{eq:twoineqs} and since $\|X_i\| \le (N\tr(\Sigma)\|\Sigma\|)^{1/4}$ we have on the same event 
\begin{equation*}
m \le C(p) \lambda \left(\max_{i\le N}\|X_i\|^2 + \sqrt{mN} \|\Sigma\|\right) \le C(p)\frac{1}{\|\Sigma\|}\sqrt{\frac{\mathbf{r}(\Sigma)}{N}}\left(\sqrt{mN} \|\Sigma\| + \sqrt{\tr(\Sigma)\|\Sigma \|N}\right) .
\end{equation*}
Solving the inequality above with respect to $m$ we have on the same event
\begin{equation*}
m \le C(p)\mathbf{r}(\Sigma).
\end{equation*}
We consider the case where $m$ does not satisfy $\mathbf{r}(\Sigma) \le m \le N/c$. If $m < \mathbf{r}(\Sigma)$, then we recover the same upper bound as above. If $m > N/c$, then on the event of Theorem \ref{thm:rearrengmentthm} for $k = \left\lfloor N/c -1\right\rfloor$ by \eqref{eq:twoineqs} and monotonicity we have
\[
\frac{k}{\lambda} \le f(k, [N]) \le C(p)\left(\sqrt{N\tr(\Sigma)\|\Sigma\|} + \|\Sigma\| \left(kN\right)^{1/2}\right).
\]
For our choice of $\lambda$ this bound leads to contradiction if $N \ge c(p)\mathbf{r}(\Sigma)$ for large enough $c(p)$. 
We are now ready to plug our bound $m \le C(p)\mathbf{r}(\Sigma)$ in \eqref{eq:twoineqs} to obtain that, with probability at least $1-\frac{c(p)}{N}$,
\[
\frac{1}{N}f(C(p)\mathbf{r}(\Sigma), [N]) \le C(p)\|\Sigma\|\sqrt{\frac{\mathbf{r}(\Sigma)}{N}}.
\]
Finally, for any $v \in S^{d - 1}$, we have $|\langle X_i,v\rangle| \le \|X_i\| \le (N\tr(\Sigma)\|\Sigma\|)^{1/4}$. Therefore,
\begin{align*}
\E \sup_{v\in S^{d-1}}\frac{1}{N}\sum_{i=1}^N \langle X_i,v\rangle^2 \ind{\lambda\langle X_i,v\rangle^2>1} &\le C(p)\|\Sigma\|\sqrt{\frac{\mathbf{r}(\Sigma)}{N}} + \frac{c(p)}{N}\sqrt{N\tr(\Sigma)\|\Sigma\|}
\\
&=(C(p) + c(p))\|\Sigma\|\sqrt{\frac{\mathbf{r}(\Sigma)}{N}}.
\end{align*}
The claim of Theorem \ref{thm:baiyin} follows.
\qed

\section{Proof of Theorem \ref{thm:informalmain} in the regime \texorpdfstring{$p = 4$}{Lg}}
\label{sec:pfour}
First, we provide a high probability version of Proposition \ref{prop:trimmedprocess}. In the definition of our estimator we assume that both $\|\Sigma\|$ and $\tr(\Sigma)$ are known, so that our estimator depends on such quantities. In Section \ref{sec:estimtraceandopernorm}, we provide optimal guarantees for estimating these parameters. The following result can be seen as a special case of Lemma 5 in \cite{zhivotovskiy2021dimension}. We provide a detailed sketch of the proof for the sake of completeness.
\begin{proposition}
\label{prop:uniftruncation}
Assume that $X$ is a zero mean random vector with covariance $\Sigma$ satisfying the $L_4-L_2$ \emph{norm equivalence}. Let $X_1, \ldots, X_N$ be a sample of independent copies of $X$. For a fixed truncation level $\lambda > 0$, with probability at least $1 - \delta$, it holds that
\[
\sup_{v\in S^{d-1}}\left|\frac{1}{N\lambda}\sum_{i=1}^N \psi(\lambda\langle X_i,v\rangle^2) - \E\langle X,v\rangle^2 \right|
\lesssim \frac{\mathbf{r}(\Sigma) + \log(1/\delta)}{\lambda N} + \lambda\kappa^{4}\|\Sigma\|^2,
\]
where $\kappa = \kappa(4)$.
\end{proposition}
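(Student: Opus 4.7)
The plan is to adapt the proof of Proposition \ref{prop:trimmedprocess} by invoking the first (high-probability) conclusion of the PAC-Bayesian inequality in Lemma \ref{lem:pacbayes} in place of its in-expectation form. I would keep the same parameter space $\Theta=(\mathbb{R}^d)^2$, the same centered Gaussian prior $\mu$ with covariance $\beta^{-1}I_d$ on each factor, and the same family of posteriors $\rho_v$ with mean $(v,v)$ and covariance $\beta^{-1}I_d$ on each factor. The point is that this family of posteriors is indexed by $v\in S^{d-1}$ (together with $v=0$ to anchor the bound at $0$), and the first part of Lemma \ref{lem:pacbayes} asserts that a single event of probability at least $1-\delta/2$ captures the bound \emph{simultaneously} over this entire family. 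Hence the supremum over $v$ is obtained for free, and no further symmetrization or chaining is needed.

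On this good event I would retrace, verbatim, the steps after \eqref{eq:twoparts} in the proof of Proposition \ref{prop:trimmedprocess}. Using the two parts of Lemma \ref{lem:almostconvex} and the symmetrization trick $\min\{1,2\lambda^2\langle X,v\rangle^4/6\}\le 8\,\E_{\rho_v}\min\{1,\lambda^2(\langle X,\theta\rangle\langle X,\nu\rangle)^2/6\}$ yields the pointwise inequality
\[
\psi(\lambda\langle X,v\rangle^2)\;\le\;\E_{\rho_v}\log\!\bigl(1+\lambda\langle X,\theta\rangle\langle X,\nu\rangle+c\lambda^2(\langle X,\theta\rangle\langle X,\nu\rangle)^2\bigr)+\min\!\bigl\{1,\tfrac{2\lambda^2\|X\|^4}{6\beta^{2}}\bigr\},
\]
for an absolute constant $c>0$. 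Setting $f(X,\theta,\nu)=\log(1+\lambda\langle X,\theta\rangle\langle X,\nu\rangle+c\lambda^2(\langle X,\theta\rangle\langle X,\nu\rangle)^2)$, applying the first bound in Lemma \ref{lem:pacbayes} with $t=\log(2/\delta)$, using $\log(1+y)\le y$ to bound $\log\E_X\exp f\le \lambda\E\langle X,v\rangle^2+2c\lambda^2\kappa^{4}(\|\Sigma\|^2+\beta^{-2}\tr(\Sigma)^2)$ by \eqref{eq:forthnorm}--\eqref{eq:normforth}, and using $\mathcal{KL}(\rho_v,\mu)=\beta$, I would conclude that simultaneously for all $v\in S^{d-1}\cup\{0\}$,
\[
\frac{1}{N}\sum_{i=1}^N \E_{\rho_v} f(X_i,\theta,\nu)-\lambda\E\langle X,v\rangle^2\;\le\;\frac{\beta+\log(2/\delta)}{N}+2c\lambda^2\kappa^{4}\bigl(\|\Sigma\|^2+\beta^{-2}\tr(\Sigma)^2\bigr).
\]
Choosing $\beta=\mathbf{r}(\Sigma)$ gives $\beta^{-2}\tr(\Sigma)^2=\|\Sigma\|^2$ and produces the desired leading terms $(\mathbf{r}(\Sigma)+\log(1/\delta))/N$ and $\lambda^2\kappa^4\|\Sigma\|^2$.

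The only piece that is not automatically high-probability is the residual $\frac{1}{N}\sum_i \min\{1,\tfrac{2\lambda^2\|X_i\|^4}{6\mathbf{r}(\Sigma)^2}\}$ arising from the second part of Lemma \ref{lem:almostconvex}. These summands are i.i.d., bounded by $1$, and, by the computation in the proof of Proposition \ref{prop:trimmedprocess}, have expectation at most $\lambda^2\kappa^4\|\Sigma\|^2/3$. I would apply Bernstein's inequality to obtain, on a separate event of probability $\ge 1-\delta/2$,
\[
\frac{1}{N}\sum_{i=1}^N\min\!\Bigl\{1,\frac{2\lambda^2\|X_i\|^4}{6\,\mathbf{r}(\Sigma)^2}\Bigr\}\;\lesssim\;\lambda^2\kappa^4\|\Sigma\|^2+\frac{\log(1/\delta)}{N},
\]
where the $\sqrt{\cdot}$ Bernstein contribution is absorbed by AM-GM. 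Union-bounding the two events, dividing by $\lambda$, and repeating the whole argument with $-\lambda$ in place of $\lambda$ to get the absolute value (with another halving of $\delta$) produces the stated two-sided inequality.

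The main potential obstacle is bookkeeping of the $\|X\|^4$ residual: it is handled only in expectation in the proof of Proposition \ref{prop:trimmedprocess}, and one must verify that its high-probability counterpart contributes at most $\lambda\kappa^4\|\Sigma\|^2+\log(1/\delta)/(\lambda N)$ after dividing by $\lambda$, i.e.\ does not spoil the target rate; boundedness of the summands by $1$ together with their small mean makes Bernstein (or even Bennett) sharp enough for this. Everything else is a mechanical replacement of the second conclusion of Lemma \ref{lem:pacbayes} by its first conclusion.
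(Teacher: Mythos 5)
Your proposal matches the paper's own proof essentially step for step: swap the in-expectation conclusion of Lemma~\ref{lem:pacbayes} for its high-probability conclusion (which is automatically uniform over the posterior family $\{\rho_v\}_{v\in S^{d-1}\cup\{0\}}$), keep the exact same prior/posterior construction, bound the $\|X\|^4$ residual from Lemma~\ref{lem:almostconvex} via Bernstein using boundedness in $[0,1]$ and the moment bound $\E\|X\|^4\le\kappa^4\tr(\Sigma)^2$, then union bound and repeat with $-\lambda$. The only cosmetic difference is your explicit budgeting of $\delta$ across the three events, which the paper absorbs into constants; the argument is correct and identical in substance.
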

\begin{proof}
We repeat the lines of the proof of Proposition \ref{prop:trimmedprocess}. However, instead we plug 
\[
f(X, \theta, \nu) = \log\left(1 + \lambda\langle X, \theta\rangle\langle X, \nu\rangle  + c\lambda^2\left(\langle X, \theta\rangle\langle X, \nu\rangle\right)^2 \right)
\]
in the first part of Lemma \ref{lem:pacbayes}. This implies that, with probability at least $1 - \delta$,
\begin{equation}
\label{eq:uniformboundwithtrimming}
\sup\limits_{v \in S^{d - 1}}\left(\frac{1}{N}\sum\limits_{i = 1}^N \E_{\rho_v}f\left(X_i, \theta, \nu\right) - \lambda\E\langle X, v\rangle^2\right) \le \frac{\mathbf{r}(\Sigma) + \log(1/\delta)}{ N} + 4c\lambda^2\kappa^{4}\|\Sigma\|^2,
\end{equation}
where $c > 0$ is the same constant as in the proof of Proposition \ref{prop:trimmedprocess}. Furthermore, by the Bernstein inequality we have
\begin{align*}
\frac{1}{N}\sum\limits_{i = 1}^N\min\{1, \lambda^2(\mathbf{r}(\Sigma))^{-2}\|X_i\|^4/3\} &\le \E\min\{1, 2\lambda^2(\mathbf{r}(\Sigma))^{-2}\|X\|^4/6\} 
\\
&\ + \sqrt{\frac{2\log(1/\delta)}{N}\E\min\{1, 2\lambda^2(\mathbf{r}(\Sigma))^{-2}\|X\|^4/6\}} + \frac{2\log(1/\delta)}{3N}.
\end{align*}
where we used that each summand belongs to the interval $[0, 1]$ and therefore the variance of each summand is bounded by its expectation. Following the proof of Lemma \ref{lem:normofthevec}, we have $\E\|X\|^4 \le \kappa^4(\tr(\Sigma))^2$. This implies that, with probability at least $1 - \delta$,
\begin{equation}
\label{eq:bernstein}
\frac{1}{N}\sum\limits_{i = 1}^N\min\{1, 2\lambda^2(\mathbf{r}(\Sigma))^{-2}\|X_i\|^4/6\} \le \frac{2\lambda^2\kappa^4\|\Sigma\|^2}{3} + \frac{3\log(1/\delta)}{N}.
\end{equation}
Using this line and the union bound together with \eqref{eq:uniformboundwithtrimming} and the derivations in the proof of Proposition \ref{prop:trimmedprocess}, we obtain the one-sided version of our claim. Repeating the same lines by replacing $\lambda$ by $-\lambda$ and using the union bound again, we finish the proof.
\end{proof}

Our next result concludes the proof of Theorem 1 when $p = 4$. Recall that $\kappa$ denotes $\kappa(4)$.

\begin{theorem}
\label{thm:thecasepfour}
There is an absolute constant $C > 0$ such that the following holds.
Assume that $X$ is a zero mean random vector with covariance $\Sigma$ satisfying the $L_4-L_2$ norm equivalence. Fix the corruption level $\eta \in [0, 1]$ and the confidence level $\delta \in (0, 1)$. Assume that $\widetilde{X}_{1}, \ldots, \widetilde{X}_N$ is an $\eta$-corrupted sample. Then there exists an estimator $\widehat{\Sigma}_{\eta, \delta}$ such that, with probability at least $1 - \delta$,
\[
\left\|\widehat{\Sigma}_{\eta, \delta} - \Sigma\right\| \le C\kappa^2\|\Sigma\|\left(\sqrt{\frac{\mathbf{r}(\Sigma) + \log(1/\delta)}{N}} + \sqrt{\eta}\right).
\]
\end{theorem}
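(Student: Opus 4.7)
The plan is to show that $\widehat{\Sigma}_{\eta,\delta}$ equalizes the empirical truncated quadratic process to the true covariance form uniformly on the sphere, and then to bound this uniform distance on the corrupted sample by combining Proposition \ref{prop:uniftruncation} on the clean sample with a direct estimate of the corruption error. Define, for a PSD matrix $M$,
\[
T_\lambda(M) = \sup_{v \in S^{d-1}} \left| \frac{1}{\lambda N} \sum_{i=1}^N \psi(\lambda \langle \widetilde X_i, v\rangle^2) - v^\top M v \right|.
\]
Since $\Sigma \in \mathbb{S}_+^d$ and $\widehat\Sigma_{\eta,\delta}$ is defined as a minimizer of $T_\lambda$ over $\mathbb{S}_+^d$, we have $T_\lambda(\widehat\Sigma_{\eta,\delta}) \le T_\lambda(\Sigma)$. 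A triangle inequality then gives
\[
\|\widehat\Sigma_{\eta,\delta} - \Sigma\| = \sup_{v \in S^{d-1}} |v^\top (\widehat\Sigma_{\eta,\delta} - \Sigma) v| \le T_\lambda(\widehat\Sigma_{\eta,\delta}) + T_\lambda(\Sigma) \le 2\, T_\lambda(\Sigma),
\]
so the whole argument reduces to bounding $T_\lambda(\Sigma)$.

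Next I would isolate the effect of the adversarial corruption. Let $J \subset [N]$ be the set of indices on which $\widetilde X_i \neq X_i$, so $|J| \le \eta N$. Because $\psi(\lambda \langle \cdot, v\rangle^2)$ takes values in $[0,1]$ (the argument is non-negative and $\psi$ truncates to $[-1,1]$), replacing a single sample point alters the corresponding summand by at most $1$. Hence, uniformly in $v \in S^{d-1}$,
\[
\left| \frac{1}{\lambda N} \sum_{i=1}^N \psi(\lambda \langle \widetilde X_i, v\rangle^2) - \frac{1}{\lambda N} \sum_{i=1}^N \psi(\lambda \langle X_i, v\rangle^2) \right| \le \frac{|J|}{\lambda N} \le \frac{\eta}{\lambda},
\]
where $X_1, \ldots, X_N$ denotes the underlying clean i.i.d.\ sample. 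Combining this with $v^\top \Sigma v = \E \langle X, v\rangle^2$ yields
\[
T_\lambda(\Sigma) \le \sup_{v \in S^{d-1}} \left| \frac{1}{\lambda N} \sum_{i=1}^N \psi(\lambda \langle X_i, v\rangle^2) - \E \langle X, v\rangle^2 \right| + \frac{\eta}{\lambda}.
\]

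The remaining supremum is exactly the quantity controlled by Proposition \ref{prop:uniftruncation}, which gives, with probability at least $1-\delta$,
\[
T_\lambda(\Sigma) \lesssim \frac{\mathbf{r}(\Sigma) + \log(1/\delta)}{\lambda N} + \lambda \kappa^4 \|\Sigma\|^2 + \frac{\eta}{\lambda}.
\]
Optimizing, I would choose
\[
\lambda = \frac{1}{\kappa^2 \|\Sigma\|} \sqrt{\frac{\mathbf{r}(\Sigma) + \log(1/\delta) + \eta N}{N}},
\]
which balances the linear-in-$\lambda$ term against the sum of the two inverse-in-$\lambda$ terms. Substituting back and using $\sqrt{a+b} \le \sqrt{a} + \sqrt{b}$ produces, up to an absolute constant, the bound $\kappa^2 \|\Sigma\|\bigl(\sqrt{(\mathbf{r}(\Sigma)+\log(1/\delta))/N} + \sqrt{\eta}\bigr)$, as required.

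The proof itself is essentially a one-line minimizer comparison, a one-line corruption estimate, and one invocation of Proposition \ref{prop:uniftruncation}, so the main subtlety lies not in the argument but in the \emph{feasibility} of the estimator: the optimal $\lambda$ depends on the unknown quantities $\|\Sigma\|$ and $\mathbf{r}(\Sigma)$. This is the obstacle I would expect to have to work around; the paper handles it by plugging in the auxiliary estimators of $\|\Sigma\|$ and $\operatorname{Tr}(\Sigma)$ developed in Section~\ref{sec:opernorm}, at the cost only of an absolute constant factor in the final bound.
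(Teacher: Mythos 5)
Your proof is correct and follows essentially the same path as the paper: isolate the corruption error via $|\psi| \le 1$, invoke Proposition~\ref{prop:uniftruncation} on the clean sample, pick $\lambda$ to balance the terms, and close with a triangle inequality. The only cosmetic difference is that you argue directly through the argmin formulation \eqref{eq:minmaxestimator} and the comparison $T_\lambda(\widehat\Sigma_{\eta,\delta}) \le T_\lambda(\Sigma)$, whereas the paper phrases the estimator as any element of a feasibility set $\Gamma$ and then notes in a remark that the argmin version also works; these are interchangeable given the same triangle-inequality step, and you correctly flag that the dependence of $\lambda$ on $\|\Sigma\|$ and $\mathbf{r}(\Sigma)$ is resolved by the auxiliary estimators of Section~\ref{sec:estimtraceandopernorm}.
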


We begin with presenting the estimator. As we mentioned, we assume that $\eta, \|\Sigma\|, \tr(\Sigma)$ and the constant $\kappa = \kappa(4)$ are known. In Section \ref{sec:estimtraceandopernorm}, we discuss how to estimate $\|\Sigma\|$ and $\tr(\Sigma)$ up to multiplicative constant factors. The dependence on $\kappa$ in our estimator can also be ignored, though in this case we obtain a slightly weaker dependence on this parameter in the final bound. 

\medskip
\begin{tcolorbox}
\label{Box:firstEstimator}
The estimator in Theorem \ref{thm:thecasepfour}
\begin{enumerate}
    \item Given $\delta, \tr(\Sigma), \|\Sigma\|, \kappa$, and an $\eta$-corrupted sample $\widetilde{X}_1,\ldots,\widetilde{X}_{N}$ we set
    \[
    \lambda = \frac{1}{\kappa^2\|\Sigma\|}\sqrt{\frac{\mathbf{r}(\Sigma) + \log(1/\delta) + \eta N}{N}}.
    \]
    \item Define \[
\Gamma = \bigcap\limits_{v \in S^{d - 1}}\left\{A \in \mathbb{S}_{+}^d: \left|\frac{1}{\lambda N}\sum\limits_{i=1}^N \psi\left(\lambda\langle\widetilde{X}_i,v\rangle^2\right) - v^{\top}A v\right| \le C\lambda\kappa^4\|\Sigma\|^2/2\right\}.
\]
    \item 
    Let $\widehat{\Sigma}_{\eta,\delta}$ be any matrix in the set $\Gamma$. If the set is empty, we output $\widehat{\Sigma}_{\eta,\delta} = 0$.
\end{enumerate}
\end{tcolorbox}

\begin{proof}
Since the truncation function $\psi(\cdot)$ is bounded, we have
\[
\sup_{v\in S^{d-1}}\left|\frac{1}{N\lambda}\sum_{i=1}^N \psi(\lambda\langle \widetilde{X}_i,v\rangle^2) - \E\langle X,v\rangle^2 \right| \le \sup_{v\in S^{d-1}}\left|\frac{1}{N\lambda}\sum_{i=1}^N \psi(\lambda\langle X_i,v\rangle^2) - \E\langle X,v\rangle^2 \right| + \frac{\eta}{\lambda}.
\]
Combining this with Proposition \ref{prop:uniftruncation} we have for some $C > 0$, with probability at least $1 - \delta$,
\[
\sup_{v\in S^{d-1}}\left|\frac{1}{N\lambda}\sum_{i=1}^N \psi(\lambda\langle \widetilde X_i,v\rangle^2) - v^{\top}\Sigma v \right| \le \frac{C}{4}\left(\frac{\mathbf{r}(\Sigma) + \log(1/\delta) + \eta N}{\lambda N} + \lambda\kappa^{4}\|\Sigma\|^2\right).
\]
Using the triangle inequality, the definition of $\widehat{\Sigma}_{\eta, \delta}$ and the line above, we have on the same event
\begin{align*}
\left\|\widehat{\Sigma}_{\eta, \delta} - \Sigma\right\| &\le \sup\limits_{v \in S^{d-1}}\left|\frac{1}{N\lambda}\sum_{i=1}^N \psi(\lambda\langle \widetilde X_i,v\rangle^2) - v^{\top}\widehat{\Sigma}_{\eta, \delta}v\right| + \sup\limits_{v \in S^{d-1}}\left|\frac{1}{N\lambda}\sum_{i=1}^N \psi(\lambda\langle \widetilde X_i,v\rangle^2) - v^{\top}\Sigma v\right|
\\
&\le C\kappa^2\|\Sigma\|\left(\sqrt{\frac{\mathbf{r}(\Sigma) + \log(1/\delta)}{N}} + \sqrt{\eta}\right).
\end{align*}
Our choice of $\lambda$ concludes the proof.
\end{proof}

\begin{remark}
It is straightforward to verify that the matrix $\widehat{\Sigma}_{\eta, \delta}$ defined by \eqref{eq:minmaxestimator} satisfies the assumption of Theorem \ref{thm:thecasepfour} and can be used as an estimator of the covariance matrix. 
\end{remark}
Finally, we show how to estimate the value of the truncation parameter $\lambda$ using the corrupted observations.

\subsection{Estimating the truncation level \texorpdfstring{$\lambda$}{Lg} in the regime \texorpdfstring{$p = 4$}{Lg}}
\label{sec:estimtraceandopernorm}

We begin this section with a brief discussion of our contamination model. To simplify the technical aspects of our analysis, we impose a slight restriction on the strong contamination model described in the introduction. In our analysis, we require the estimation of multiple parameters, which can be accomplished by dividing the sample into several equal parts and estimating these parameters using independent sub-samples. This approach is standard, and has been analyzed, for example, in \cite{mendelson2020robust} in the same context. However, in the strong contamination model, the adversary has the ability to replace some observations within each sub-sample in a manner that depends on the entire sample. This can result in dependent sub-samples and a more complicated stability type analysis appearing in the literature (see \cite[Section 6]{diakonikolas2022outlier}). We therefore restrict the adversary from making the sub-samples dependent on each other. We note that this assumption is standard and has been made (implicitly) in some recent works \cite{lugosi2021robust,oliveira2022improved}. Furthermore, it is automatically satisfied in standard contamination models, including the Huber model.

An important aspect of the proof of Theorem \ref{thm:thecasepfour} is that we only need to know $\|\Sigma\|$ and $\tr(\Sigma)$ up to multiplicative constant factors. That is, we want to find two estimators $\widehat \varphi_1$ and $\widehat \varphi_2$ such that for some $c > 1$, with high probability with respect to the realization of the corrupted sample $\widetilde X_1, \ldots, \widetilde X_N$, 
\[
\frac{1}{c}\tr(\Sigma) \le \widehat \varphi_1 \le c\tr(\Sigma) \quad \text{and} \quad \frac{1}{c}\|\Sigma\| \le \widehat \varphi_2 \le c\|\Sigma\|.
\]
If these estimators are available, we can assume, without the loss of generality, that our initial sample is of size $3N$. We use the first $2N$ elements to estimate $\tr(\Sigma)$ and $\|\Sigma\|$ and then plug them into the truncation level $\lambda$ in Theorem \ref{thm:thecasepfour}. A similar sample splitting strategy is used in \cite{mendelson2020robust}. Our contribution is that we provide estimators $\widehat \varphi_1$ and $\widehat \varphi_2$ such that they take the adversarial corruption into account as well as do not contain an unnecessary logarithmic factor. For the rest of the section, we fix $\kappa = \kappa(4)$, where $\kappa(\cdot)$ is defined in \eqref{eq:momeqv}. 

\subsection{Estimating \texorpdfstring{$\tr(\Sigma)$}{Lg}}

One can estimate $\tr(\Sigma)$ using, for example, the trimmed mean estimator analyzed in \cite{lugosi2021robust}. Let $e_1, \ldots, e_d$ denote the canonical basis in $\mathbb{R}^d$. Observe that 
\[
\tr(\Sigma) = \sum\nolimits_{i = 1}^d\E\langle X, e_i\rangle^2. 
\]
Using the $L_4-L_2$ norm equivalence and the proof of Lemma \ref{lem:normofthevec} we have $\var\left(\sum\nolimits_{i = 1}^d\langle X, e_i\rangle^2\right) \le \kappa^4(\tr(\Sigma))^2$. Given an $\eta$-corrupted sample of size $2N$, the trimmed mean estimator $\widehat \varphi_1$ applied to the random variable $\sum\nolimits_{i = 1}^d\langle X, e_i\rangle^2$ implies that, for any $\delta \in (\exp(-N)/4, 1)$, with probability at least $1 - 4\exp(-\varepsilon N/12)$,
\[
|\widehat \varphi_1 - \tr(\Sigma)| \le 10\sqrt{\varepsilon}\kappa^2\tr(\Sigma) ,
\]
where $\varepsilon = 8\eta + 12\frac{\log(4/\delta)}{N}$.
This bound is presented in \cite[Theorem 1]{lugosi2021robust}.
For the sake of brevity, we do not provide the details of the multivariate trimmed mean estimator. Observe that for this choice of $\varepsilon$, it holds that $1 - 4\exp(-\varepsilon N/12) \ge 1 - \delta$. Provided that $10\sqrt{\varepsilon}\kappa^2 \le \frac{1}{2}$ we obtain that, with probability at least $1 - \delta$,
\begin{equation}
\label{eq:estuptoconst}
\frac{1}{2}  \tr(\Sigma) \le \widehat \varphi_1 \le 2 \tr(\Sigma).
\end{equation}
It is only left to check that it is sufficient to have $\eta \in \left[0, \frac{1}{40^2\kappa^2}\right]$ and $N \ge 12\cdot 40^2\kappa^4\log(4/\delta)$. \subsection{Estimating \texorpdfstring{$\|\Sigma\|$}{Lg}}
\label{sec:opernorm}
A simply way of estimating $\|\Sigma\|$ is to first estimate the full covariance matrix $\Sigma$ as in Theorem \ref{thm:thecasepfour} and then compute the operator norm of the estimator. 
However, the difficulty of estimating $\|\Sigma\|$ is that we are not allowed to choose the truncation level $\lambda$ depending on $\|\Sigma\|$. In particular, all previous results we are aware of lead to an additional logarithmic factor in the assumption on the sample size (see \cite{mendelson2020robust, catoni2017dimension}). Instead we provide an adaptive estimator similar in spirit to the original robust estimator of O. Catoni \cite{catoni2012challenging} for estimating the mean of a random variable (see also \cite{catoni2016pac} for related computations). To simplify the proof, we assume that the distribution of $X$ satisfies $\Pr(X = 0) = 0$. This assumption is mild and can always be satisfied if we add a small Gaussian perturbation to $X$ without changing the covariance matrix of $X$ too much.

\begin{proposition}
\label{prop:estlargesteigenvalue}
Assume that $X$ is a zero mean random vector with covariance $\Sigma$ satisfying the $L_4-L_2$ norm equivalence. Assume additionally that $\Pr(X = 0) = 0$. Let $c \ge 1$ be a large enough absolute constant. Fix the corruption level $\eta \in [0, \frac{1}{300c\kappa^4}]$ and the confidence level $\delta \in (0, 1/4)$. Assume that $\widetilde{X}_{1}, \ldots, \widetilde{X}_N$ is an $\eta$-corrupted sample. Then there is a unique value $\widehat{\alpha} > 0$ satisfying
\[
\frac{1}{N}\sup\limits_{v \in S^{d - 1}}\sum\nolimits_{i = 1}^N \psi\left(\widehat\alpha^2\langle \widetilde X_i,  v\rangle^2\right) = \frac{1}{20c\kappa^4} + \eta.
\]
If $N \ge 100c\kappa^4\mathbf{r}(\Sigma)+  400c\kappa^4\log(1/\delta)$, then with probability at least $1 - 4\delta$, it holds that
\[
\frac{1}{4} \|\Sigma\| \le \frac{1}{24c\kappa^4\widehat{\alpha}^2} \le 4\|\Sigma\|.
\]
\end{proposition}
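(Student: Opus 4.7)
The estimator follows the adaptive spirit of Catoni's mean estimator. Define the data-dependent function
\[
F(\alpha) := \frac{1}{N}\sup_{v \in S^{d-1}} \sum_{i=1}^N \psi\bigl(\alpha^2 \langle \widetilde X_i, v\rangle^2\bigr), \qquad \alpha \ge 0,
\]
and let $\tau := \frac{1}{20c\kappa^4} + \eta$ be the target value. First, $F$ is continuous and non-decreasing on $[0,\infty)$: each summand $\alpha \mapsto \psi(\alpha^2\langle \widetilde X_i, v\rangle^2)$ is continuous and non-decreasing in $\alpha$, and the supremum is taken over the compact sphere of an equicontinuous family. Moreover $F(0)=0$, and $F(\alpha) \to |\{i : \widetilde X_i\ne 0\}|/N$ as $\alpha \to \infty$, which equals $1$ almost surely under $\Pr(X=0)=0$ (adversarial replacements may be assumed nonzero). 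Hence some $\widehat\alpha>0$ solves $F(\widehat\alpha)=\tau$. Strict monotonicity of $F$ away from the plateau values $k/N$, combined with the fact that $\tau$ does not coincide with any such plateau under our numerical conditions, yields uniqueness.

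The core of the proof is a one-point variant of Proposition \ref{prop:uniftruncation} applied at two deterministic values of $\alpha$; no bound uniform in $\alpha$ is needed. Reading Proposition \ref{prop:uniftruncation} with $\lambda$ renamed to $\alpha^2$, multiplying through by $\alpha^2$, and absorbing the $\eta$-corruption by noting that $|\psi|\le 1$ implies the empirical average can change by at most $\eta$ under the replacement of at most $\eta N$ terms, I obtain: for any fixed $\alpha > 0$, with probability at least $1-2\delta$,
\[
\bigl|F(\alpha) - \alpha^2 \|\Sigma\|\bigr| \;\le\; \sup_{v \in S^{d-1}}\Bigl|\tfrac{1}{N}\sum_{i=1}^N \psi(\alpha^2\langle \widetilde X_i,v\rangle^2) - \alpha^2 v^{\top}\Sigma v\Bigr| \;\le\; C\!\left(\frac{\mathbf{r}(\Sigma)+\log(1/\delta)}{N} + \alpha^4\kappa^4\|\Sigma\|^2 + \eta\right).
\]

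To bracket $\widehat\alpha$, apply the estimate above at $\alpha_+^2 := \frac{1}{6c\kappa^4\|\Sigma\|}$ and $\alpha_-^2 := \frac{1}{96c\kappa^4\|\Sigma\|}$, taking a union bound (giving probability at least $1-4\delta$). At $\alpha_+$ we have $\alpha_+^2\|\Sigma\| = \frac{1}{6c\kappa^4}$ and $\alpha_+^4\kappa^4\|\Sigma\|^2 = \frac{1}{36c^2\kappa^4}$; the sample-size hypothesis forces $\frac{\mathbf{r}(\Sigma)+\log(1/\delta)}{N}\le \frac{1}{100c\kappa^4}$, and the corruption assumption gives $\eta\le \frac{1}{300c\kappa^4}$. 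Choosing $c$ large yields
\[
F(\alpha_+) \ge \tfrac{1}{6c\kappa^4} - C\Bigl(\tfrac{1}{100c\kappa^4} + \tfrac{1}{36c^2\kappa^4} + \tfrac{1}{300c\kappa^4}\Bigr) > \tfrac{1}{20c\kappa^4} + \eta = \tau.
\]
An entirely symmetric computation at $\alpha_-$ yields $F(\alpha_-) < \tau$. Monotonicity of $F$ now forces $\alpha_- \le \widehat\alpha \le \alpha_+$, which is precisely $\tfrac{1}{4}\|\Sigma\| \le \tfrac{1}{24c\kappa^4\widehat\alpha^2} \le 4\|\Sigma\|$.

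The main obstacle is purely numerical: the deterministic gap $\tfrac{1}{6c\kappa^4} - \tfrac{1}{20c\kappa^4} = \tfrac{7}{60c\kappa^4}$ must strictly dominate the additive error produced by the one-point estimate, and this is the single place where the free absolute constant $c$ gets fixed (large enough that $C/100$, $C/(36c)$, and $C/300$ jointly sum to less than $7/60$). Every other ingredient is a direct adaptation of the PAC-Bayesian template already used to prove Proposition \ref{prop:uniftruncation}.
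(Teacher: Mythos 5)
Your bracketing strategy — apply a pointwise (fixed-$\alpha$) bound at two deterministic $\alpha_\pm$, use monotonicity of $F$ to trap $\widehat\alpha$ between them — is a legitimate and genuinely different route from the paper's. The paper instead exploits that the PAC-Bayes inequality (Lemma~\ref{lem:pacbayes}) holds simultaneously for every posterior, hence simultaneously over all $(\alpha,v)$ since $\mathcal{KL}(\rho_{\alpha,v},\mu)=\alpha^2\beta$; it then plugs the \emph{random} $\widehat\alpha$ directly into the resulting inequality, obtaining a quadratic inequality in $\widehat\alpha^2\|\Sigma\|$ which it solves exactly. Your approach sidesteps the need for uniformity in $\alpha$, which is an attractive simplification.

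However, the argument has a genuine gap in the final numerical step. You claim that choosing the constant $c$ large makes the additive error from the one-point bound dominated by the deterministic gap $\alpha_+^2\|\Sigma\|-\tau$. This is false for the term $C\cdot\frac{\mathbf r(\Sigma)+\log(1/\delta)}{N}$: under the hypothesis $N\ge 100c\kappa^4\mathbf r(\Sigma)+400c\kappa^4\log(1/\delta)$, this term is bounded by $\frac{C}{80c\kappa^4}$, and the gap is $\frac{7}{60c\kappa^4}-O(\eta)$. \emph{Both} scale as $\Theta(1/(c\kappa^4))$: the factor $c$ cancels, and the comparison reduces to a condition on the absolute constant $C$ hidden in the $\lesssim$ of Proposition~\ref{prop:uniftruncation} (roughly $C\lesssim 9$), which you cannot force by choosing $c$ large. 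Your closing remark — that $C/100$, $C/(36c)$, $C/300$ ``jointly sum to less than $7/60$'' for $c$ large — treats $C/100$ and $C/300$ as if they decayed in $c$, but they do not.

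The paper avoids this issue by \emph{not} invoking Proposition~\ref{prop:uniftruncation} as a black box. It reruns the PAC-Bayes computation with the prior variance parameter chosen as $\beta=10c\kappa^4\tr(\Sigma)$ (rather than the fixed $\beta=\mathbf r(\Sigma)$ baked into Proposition~\ref{prop:uniftruncation}). With this $c$-dependent choice, the $\alpha^2\beta/N$ term becomes $\le\frac{1}{10}\alpha^2\|\Sigma\|$ and the $\beta^{-2}\tr(\Sigma)^2$ term becomes $O(1/(c^2\kappa^8))$, so every additive error term is either a controlled fraction of the main term $\alpha^2 v^\top\Sigma v$ or a controlled fraction of $1/(c\kappa^4)$ with an explicit, sub-unit coefficient. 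The remaining $2c\kappa^4\widehat\alpha^4\|\Sigma\|^2$ term is of the same order as the target, but the paper handles it by solving the quadratic in $\widehat\alpha^2\|\Sigma\|$ exactly rather than bounding it away. To salvage your proof, you would need to reproduce a pointwise version of that computation with explicit constants and the $\beta=10c\kappa^4\tr(\Sigma)$ choice — at which point your bracketing argument would become a valid alternative to the paper's quadratic-solving step. As written, the appeal to Proposition~\ref{prop:uniftruncation} with an unspecified constant leaves the conclusion unproven for the specific numerical constants in the statement.
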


\begin{proof}
We begin with the analysis of an uncorrupted sample $X_1, \ldots, X_N$.
Our first aim is to choose the distributions $\mu$ and $\rho$ in Lemma \ref{lem:pacbayes}. Let 
$
\Theta = (\mathbb{R}^d)^2
$
and choose $\mu$ to be a product of two zero mean multivariate Gaussians with mean zero and covariance $\beta^{-1}I_d$. For $v \in S^{d - 1}$ and $\alpha > 0$ let $\rho_{\alpha, v}$ be a product of two multivariate Gaussian distribution with mean $\alpha v$ and covariance $\beta^{-1}I_d$. Because of this, if $(\theta, \nu)$ is distributed according to $\rho_{\alpha, v}$, we have $\E_{\rho_{v}}(\theta, \nu) = (\alpha v, \alpha v)$. By the additivity of $\mathcal{KL}$-divergence for product measures and the standard formula, we have
\[
\mathcal{KL}(\rho_{\alpha, v}, \mu) = \alpha^2\beta.
\]
For the rest of the proof we sometimes write $\rho$ instead of $\rho_{\alpha, v}$.  By the first part of Lemma \ref{lem:almostconvex}, we have 
\begin{align*}
\psi\left(\alpha^2\langle X,  v\rangle^2\right) &=\psi\left(\E_{\rho}\langle X, \theta\rangle\langle X, \nu\rangle\right)
\\
&\le \E_{\rho}\log\left(1 + \langle X, \theta\rangle\langle X, \nu\rangle  + \left(\langle X, \theta\rangle\langle X, \nu\rangle\right)^2 \right) + \min\{1, \E_{\rho}\left(\langle X, \theta\rangle\langle X, \nu\rangle\right)^2/6\}.
\end{align*}
Observe that 
\[
\E_{\rho}\left(\langle X, \theta\rangle\langle X, \nu\rangle\right)^2 = (\alpha^2\langle X, v\rangle^2 + \beta^{-1}\|X\|^2)^2 \le 2\alpha^4\langle X, v\rangle^4 + 2\beta^{-2}\|X\|^4,
\]
and we can write
\[
\min\{1, \E_{\rho}\left(\langle X, \theta\rangle\langle X, \nu\rangle\right)^2/6\} \le \min\{1, 2\alpha^4\langle X, v\rangle^4/6\} + \min\{1, 2\beta^{-2}\|X\|^4/6\}.
\]
Repeating the lines of the proof of Proposition \ref{prop:trimmedprocess}, we get
\[
\E_{\rho}\log\E\left(1 + \langle X, \theta\rangle\langle X, \nu\rangle  + c\left(\langle X, \theta\rangle\langle X, \nu\rangle\right)^2 \right) \le \alpha^2 v^{\top}\Sigma v + 2c\kappa^{4}(\alpha^4\|\Sigma\|^2 + \beta^{-2}(\tr(\Sigma))^2),
\]
where $c \ge 1$ is an absolute constant. Using the first part of Lemma \ref{lem:pacbayes} we obtain that, with probability at least $1 - \delta$, simultaneously for all $v \in S^{d - 1}$ and $\alpha > 0$,
\begin{align*}
\frac{1}{N}\sum\nolimits_{i = 1}^N \psi\left(\alpha^2\langle X_i,  v\rangle^2\right) &\le \alpha^2 v^{\top}\Sigma v + 2c\kappa^{4}(\alpha^4\|\Sigma\|^2 + \beta^{-2}(\tr(\Sigma))^2) 
\\
&\qquad+ \frac{\alpha^2 \beta + \log(1/\delta)}{N} + \frac{1}{N}\sum\nolimits_{i = 1}^N\min\{1, 2\beta^{-2}\|X_i\|^4/6\}.
\end{align*}
Using the Bernstein inequality as in \eqref{eq:bernstein} we have, with probability at least $1 - \delta$,
\[
\frac{1}{N}\sum\nolimits_{i = 1}^N\min\{1, 2\beta^{-2}\|X_i\|^4/6\} \le \frac{2\kappa^4\beta^{-2}(\tr(\Sigma))^2}{3} + \frac{3\log(1/\delta)}{N}.
\]
We choose $\beta = 10c\kappa^4 \tr(\Sigma)$ and using the union bound, we obtain that, with probability at least $1 - 2\delta$, simultaneously for all $v \in S^{d - 1}$ and $\alpha > 0$,
\[
\frac{1}{N}\sum\nolimits_{i = 1}^N \psi\left(\alpha^2\langle X_i,  v\rangle^2\right) \le \alpha^2 v^{\top}\Sigma v + 2c\kappa^{4}\alpha^4\|\Sigma\|^2 + \frac{2}{75c\kappa^4} + \frac{10c\kappa^4\alpha^2\tr(\Sigma)}{N} + \frac{4\log(1/\delta)}{N}.
\]
Since $N \ge 100c\kappa^4\mathbf{r}(\Sigma)+  400c\kappa^4\log(1/\delta)$, we have on the same event
\[
\frac{1}{N}\sum\nolimits_{i = 1}^N \psi\left(\alpha^2\langle X_i,  v\rangle^2\right) \le \alpha^2 v^{\top}\Sigma v + \frac{1}{10}\alpha^2\|\Sigma\|+ 2c\kappa^{4}\alpha^4\|\Sigma\|^2 + \frac{11}{300c\kappa^4}.
\]
Returning to the $\eta$-corrupted sample $\widetilde X_1, \ldots, \widetilde X_N$ and since $|\psi(\cdot)| \le 1$,
\begin{equation}
\label{eq:sumofcorrupt}
\frac{1}{N}\sum\nolimits_{i = 1}^N \psi\left(\alpha^2\langle \widetilde X_i,  v\rangle^2\right) \le \alpha^2 v^{\top}\Sigma v + \frac{1}{10}\alpha^2\|\Sigma\|+ 2c\kappa^{4}\alpha^4\|\Sigma\|^2 + \frac{11}{300c\kappa^4} + \eta.
\end{equation}
Observe that the function $\varphi: \mathbb{R}_{+} \to \mathbb{R}_{+}$ given by
\[
\varphi(\alpha) =  \frac{1}{N}\sup\limits_{v \in S^{d - 1}}\sum\nolimits_{i = 1}^N \psi\left(\alpha^2\langle \widetilde X_i,  v\rangle^2\right) 
\]
is continuous (it is easy to check that $\varphi(\cdot)$ is a supremum over equi-Lipschitz functions) and non-decreasing. Furthermore, there is $w \in S^{d - 1}$ such that $\langle X_i,  w\rangle \neq 0$ for all $i \in [N]$. Indeed, to construct such a vector we pick a random vector $U$ distributed as a zero mean multivariate Gaussian random vector with unit covariance. Since $X_i \neq 0$ almost surely for all $i \in [N]$ we have that $\langle U, X_i\rangle$ is a zero mean Gaussian (conditionally on $X_i$) with non-zero covariance. Thus, taking $w = U/\|U\|$ we have almost surely $\langle X_i,  w\rangle \neq 0$ for all $i \in [N]$.
We have $\varphi(0) = 0$ and $\varphi\left(\frac{1}{\min_{i \in [N]}|\langle X_i,  w\rangle|}\right) \ge 1 - \eta$ since at most $\eta N$ vectors $\widetilde{X}_i$ can be orthogonal to $w$.
Therefore, since $\kappa, c \ge 1$ and $\eta \le \frac{1}{300c\kappa^4}$ there is $\widehat{\alpha} > 0$ such that
\[
\varphi(\widehat{\alpha}) = \frac{1}{20c\kappa^4} + \eta. 
\]  
Using \eqref{eq:sumofcorrupt}, we get on the same event
\[
2c\kappa^{4}\widehat{\alpha}^4\|\Sigma\|^2 + \frac{11}{10}\widehat{\alpha}^2\|\Sigma\| - \frac{1}{75c\kappa^4} \ge 0.
\]
Solving this as a quadratic equation (only the positive root plays a role) with respect to $\widehat{\alpha}^2\|\Sigma\|$ we get
\[
\frac{-11/10 + \sqrt{(11/10)^2 + 8/75}}{4c\kappa^{4}\widehat{\alpha}^2} \le \|\Sigma\|.
\]
This provides a lower bound on $\|\Sigma\|$ in terms of $\widehat{\alpha}^2$.

We proceed with an upper bound. For $v \in S^{d - 1}$ and $\alpha > 0$ let $\rho_{\alpha, -\alpha, v}$ be a product of two multivariate Gaussian distributions with means $\alpha v$ and $-\alpha v$ respectively and the same covariance matrix $\beta^{-1}I_d$. Repeating the proof, we obtain the following analog of inequality \eqref{eq:sumofcorrupt}. With probability at least $1 - 2\delta$, for all $v \in S^{d - 1}$ and $\alpha > 0$,
\begin{equation}
\label{eq:lowertailquadraticeq}
-\frac{1}{N}\sum\nolimits_{i = 1}^N \psi\left(\alpha^2\langle \widetilde X_i,  v\rangle^2\right) \le -\alpha^2 v^{\top}\Sigma v + \frac{1}{10}\alpha^2\|\Sigma\|+ 2c\kappa^{4}\alpha^4\|\Sigma\|^2 + \frac{11}{300c\kappa^4} + \eta.
\end{equation}
Let $v_1$ be a maximizer of $v^{\top}\Sigma v$ in $S^{d - 1}$. We have $v_1^{\top}\Sigma v_1 =\|\Sigma\|$. Furthermore, we have
\[
-\sup\limits_{v \in S^{d - 1}}\frac{1}{N}\sum\nolimits_{i = 1}^N \psi\left(\alpha^2\langle \widetilde X_i,  v\rangle^2\right) \le -\frac{1}{N}\sum\nolimits_{i = 1}^N \psi\left(\alpha^2\langle \widetilde X_i,  v_1\rangle^2\right).
\]
Since $-\varphi(\cdot)$ is non-increasing, this implies that simultaneously for all $\alpha \in [0, \widehat{\alpha}]$ on the event where \eqref{eq:lowertailquadraticeq} holds, we have
\[
-\frac{1}{20c\kappa^4} - \eta \le -\frac{9}{10}\alpha^2 \|\Sigma\|+ 2c\kappa^{4}\alpha^4\|\Sigma\|^2 + \frac{11}{300c\kappa^4} + \eta.
\]
Since $\eta \le \frac{1}{300c\kappa^4}$ we reduce this equation to 
\begin{equation}
\label{eq:onalpha}
2c\kappa^{4}\alpha^4\|\Sigma\|^2 -\frac{9}{10}\alpha^2 \|\Sigma\| + \frac{7}{75c\kappa^4} \ge 0.
\end{equation}
As a function of $\alpha^2 \|\Sigma\|$ this quadratic equation has two non-negative roots:
\[
x_1 = \frac{9/10 - \sqrt{(9/10)^2 - 56/75}}{4c\kappa^4}\quad \text{and} \quad x_2 = \frac{9/10 + \sqrt{(9/10)^2 - 56/75}}{4c\kappa^4}.
\]
We show that since \eqref{eq:onalpha} holds for all $\alpha \in [0, \widehat{\alpha}]$, we should have that $\widehat{\alpha}^2\|\Sigma\| \le x_1$. Indeed, assume that instead $\widehat{\alpha}^2\|\Sigma\| \ge x_2$. In this case $0 \le (x_{1} + x_2)/2 \le \widehat{\alpha}^2\|\Sigma\|$ and thus \eqref{eq:onalpha} should be satisfied for $\alpha^2\|\Sigma\| = (x_{1} + x_2)/2$. The obtained contradiction proves that $\widehat{\alpha}^2\|\Sigma\| \le x_1$. Finally, by union bound we have, with probability at least $1 - 4\delta$, 
\[
\frac{-11/10 + \sqrt{(11/10)^2 + 8/75}}{4c\kappa^{4}\widehat{\alpha}^2} \le \|\Sigma\| \le \frac{9/10 - \sqrt{(9/10)^2 - 56/75}}{4c\kappa^4\widehat{\alpha}^2}.
\]
Algebraic computations conclude the proof.
\end{proof}
We end up by observing that the estimator proposed in this section admits a straightforward generalization for (uncontaminated) higher order tensors. In this setup one is aiming to estimate $\E\langle X, v\rangle^s$ uniformly over $v \in S^{d -1}$ for some integer $s \ge 2$, as in the recent work of Mendelson \cite{mendelson2021approximating}. In order to provide this extension we apply \cite[Lemma 5]{zhivotovskiy2021dimension} together with the estimators of the trace and the operator norm provided above. We note that getting the optimal dependence on the contamination level $\eta$ seems more subtle when $s > 2$.

\section{Proof of Theorem \ref{thm:informalmain} in the regime \texorpdfstring{$p > 4$}{Lg}}
\label{sec:pmorefour}
In this section, we prove Theorem \ref{thm:informalmain} in the regime $p>4$. We begin with the following symmetrized version of Theorem \ref{thm:baiyin}.
\begin{corollary}
\label{cor:baiyin}
Assume that $Y$ is a zero mean random vector with covariance $\Sigma$ satisfying the $L_p-L_2$ \emph{norm equivalence} with $p > 4$. Let $Y_1, \ldots, Y_N$ be a sample of independent copies of $Y$. Consider the truncated vectors $X_i = Y_i\ind{\|Y_i\| \le (N\tr(\Sigma)\|\Sigma\|)^{1/4}}$ for $i = 1, \ldots, N$. Let $\varepsilon_1, \ldots, \varepsilon_N$ be independent Rademacher random signs. If $N  \ge c(p)\mathbf{r}(\Sigma)$, then it holds that
\[
\E\sup\limits_{v \in S^{d - 1}}\left|\frac{1}{N}\sum\limits_{i = 1}^N \varepsilon_i\left\langle X_i, v\right\rangle^2\right| \le C(p)\|\Sigma\|\sqrt{\frac{\mathbf{r}(\Sigma)}{N}},
\]
where $c(p)$ and $C(p)$ depend only on $p$ and $\kappa(p)$ and the expectation is taken with respect to both $X_i$ and $\varepsilon_i$, $i = 1, \ldots, N$.
\end{corollary}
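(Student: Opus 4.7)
The plan is to derive Corollary \ref{cor:baiyin} from Theorem \ref{thm:baiyin} via a routine symmetrization argument. Writing $\mu_v = \E\langle X, v\rangle^2$ and $Z_i(v) = \langle X_i, v\rangle^2 - \mu_v$, I would begin with the decomposition
\[
\frac{1}{N}\sum_{i=1}^N \varepsilon_i \langle X_i, v\rangle^2 = \frac{1}{N}\sum_{i=1}^N \varepsilon_i Z_i(v) + \mu_v \cdot \frac{1}{N}\sum_{i=1}^N \varepsilon_i,
\]
take the supremum over $v \in S^{d-1}$ of the absolute value, and apply the triangle inequality so that the two terms can be handled separately.

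For the first (centred) term, I would introduce an independent ghost sample $X_i' = Y_i'\ind{\|Y_i'\| \le (N\tr(\Sigma)\|\Sigma\|)^{1/4}}$ and use $\mu_v = \E_{X'}\langle X_i', v\rangle^2$ together with Jensen's inequality to bound
\[
\E \sup_{v \in S^{d-1}} \left|\frac{1}{N}\sum_{i=1}^N \varepsilon_i Z_i(v)\right| \le \E \sup_{v \in S^{d-1}} \left|\frac{1}{N}\sum_{i=1}^N \varepsilon_i \bigl(\langle X_i, v\rangle^2 - \langle X_i', v\rangle^2\bigr)\right|.
\]
Since the pairs $(X_i, X_i')$ are i.i.d.\ and exchangeable, the Rademacher signs can be absorbed by a standard symmetry argument, after which the triangle inequality yields at most $2\,\E\sup_{v\in S^{d-1}} \bigl|\frac{1}{N}\sum_i Z_i(v)\bigr|$. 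Recognising this supremum as the operator norm $\bigl\|\frac{1}{N}\sum_i X_i\otimes X_i - \E X_i\otimes X_i\bigr\|$ and invoking Theorem \ref{thm:baiyin} gives the bound $2C(p)\|\Sigma\|\sqrt{\mathbf{r}(\Sigma)/N}$ under the hypothesis $N \ge c(p)\mathbf{r}(\Sigma)$.

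For the second term, I would use that $\mu_v = \E\langle X, v\rangle^2 \le \E\langle Y, v\rangle^2 = v^{\top}\Sigma v \le \|\Sigma\|$, since truncation only decreases second moments. Hence its contribution to the supremum is at most $\|\Sigma\|\cdot \E\bigl|\frac{1}{N}\sum_i \varepsilon_i\bigr| \le \|\Sigma\|/\sqrt{N}$ by Jensen's inequality. Because $\mathbf{r}(\Sigma) \ge 1$, this is dominated by the first bound, and adding the two estimates completes the proof with a constant proportional to $C(p)$ from Theorem \ref{thm:baiyin}.

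There is no substantive obstacle here: the corollary is essentially a restatement of Theorem \ref{thm:baiyin} in Rademacher form, obtained by a textbook symmetrization trick. The point of stating it separately is that the Rademacher process is the convenient building block for the trimmed mean style arguments used in Section \ref{sec:pmorefour}.
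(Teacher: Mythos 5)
Your proposal is correct and follows essentially the same route as the paper: decompose $\frac{1}{N}\sum_i\varepsilon_i\langle X_i,v\rangle^2$ into the centered process plus $\mu_v\cdot\frac{1}{N}\sum_i\varepsilon_i$, handle the centered part by Jensen and a ghost sample/exchangeability desymmetrization to reduce to Theorem~\ref{thm:baiyin}, and bound the uncentered remainder by $\|\Sigma\|/\sqrt{N}\le\|\Sigma\|\sqrt{\mathbf{r}(\Sigma)/N}$. This matches the paper's argument in both decomposition and tooling.
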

\begin{remark}
We highlight that the result of Theorem \ref{thm:baiyin} and Corollary \ref{cor:baiyin} admits extensions to higher order power in the spirit of \cite{vershynin2011approximating}. For the related derivations in the light-tailed case, see the proof of Theorem 2 in \cite{zhivotovskiy2021dimension}.
\end{remark}

\begin{proof}
We use the standard desymmetrization argument \cite[Section 2.1]{koltchinskii2011oracle}. Let $X^{\prime}_1, \ldots, X^{\prime}_N$ be the independent copies of $X$ and let $\E^{\prime}$ denote the expectation with respect to these random vectors. Using the triangle inequality, Jensen's inequality and the contraction principle \cite[Theorem 4.4]{ledoux2013probability} we get
\begin{align*}
\E\sup\limits_{v \in S^{d - 1}}\left|\sum\limits_{i = 1}^N \varepsilon_i\left\langle X_i, v\right\rangle^2\right| &\le \E\sup\limits_{v \in S^{d - 1}}\left|\sum\limits_{i = 1}^N \varepsilon_i(\left\langle X_i, v\right\rangle^2 - \E^{\prime}\left\langle X^{\prime}_i, v\right\rangle^2)\right| + \E\sup\limits_{v \in S^{d - 1}}\left|\sum\limits_{i = 1}^N \varepsilon_i\E\left\langle X_i, v\right\rangle^2\right|
\\
&\le\E\E^{\prime}\sup\limits_{v \in S^{d - 1}}\left|\sum\limits_{i = 1}^N \left\langle X_i, v\right\rangle^2 - \left\langle X^{\prime}_i, v\right\rangle^2\right| + \|\Sigma\|\sqrt{N}
\\
&\le2\E\sup\limits_{v \in S^{d - 1}}\left|\sum\limits_{i = 1}^N \left\langle X_i, v\right\rangle^2 - \E\left\langle X_i, v\right\rangle^2\right| + \|\Sigma\|\sqrt{N}.
\end{align*}
The bound of Theorem \ref{thm:baiyin} concludes the proof.
\end{proof}

In the regime $p > 4$, we show how to adapt the proposed estimator to obtain the optimal dependence on the contamination level $\eta$. The main insight is to interpret our estimator as a second order version of the trimmed mean estimator of Lugosi and Mendelson \cite{lugosi2021robust}. Our proof mainly follows their steps, however two important modifications are required: First, we need to exploit Theorem \ref{thm:baiyin} when controlling the expected supremum of quadratic processes, while in \cite{lugosi2021robust} the control of the corresponding process follows from a simple application of the Cauchy-Schwartz inequality. To do so, we also need to truncate the norms of our observations as explained after Proposition \ref{prop:truncdoesnothurt}.  Second, since the quadratic forms of interest are non-negative, we consider only a one-sided truncation. The main theorem of this section is the following result.

\begin{theorem}
\label{thm:thecasep_greater_four}
Assume that $X$ is a zero mean random vector with covariance $\Sigma$ satisfying the $L_p-L_2$ norm equivalence with $p>4$. Fix the corruption level $\eta \in [0, 1]$ and the confidence level $\delta \in (0, 1)$. Then there exists an estimator $\widehat{\Sigma}_{\eta, \delta}$ such that, with probability at least $1 - \delta$,
\[
\left\|\widehat{\Sigma}_{\eta, \delta} - \Sigma\right\| \le C(p)\|\Sigma\|\left(\sqrt{\frac{\mathbf{r}(\Sigma) + \log(1/\delta)}{n}} + \kappa(p)^2\eta^{1-2/p}\right).\
\]
Here $C(p)$ is a non-increasing function of $p$ that satisfies $C(p) \to \infty$ as $p \to 4$.
\end{theorem}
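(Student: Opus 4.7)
The plan is to construct a second-order trimmed mean estimator adapted from Lugosi-Mendelson \cite{lugosi2021robust}. Relative to the $p=4$ case of Theorem \ref{thm:thecasepfour} and to the first-order scheme of \cite{lugosi2021robust}, the two crucial modifications are: to use only a one-sided truncation (justified because $\langle X_i,v\rangle^2 \ge 0$), and to control the resulting empirical quadratic process by Corollary \ref{cor:baiyin} rather than by Cauchy-Schwarz. Concretely, I would first replace each observation $\widetilde X_i$ whose norm exceeds $R = (N\tr(\Sigma)\|\Sigma\|)^{1/4}$ by zero to obtain samples $X_i$ to which Theorem \ref{thm:baiyin} applies. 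Fix a trim level $\alpha = c_0(\eta + \log(1/\delta)/N)$ and, for each $v \in S^{d-1}$, let
\[
T_\alpha(v) = \frac{1}{N}\sum_{i=1}^N \langle X_i,v\rangle^2 \ind{\langle X_i,v\rangle^2 \le q_\alpha(v)},
\]
where $q_\alpha(v)$ is the $\lceil(1-\alpha)N\rceil$-th order statistic of $(\langle X_i,v\rangle^2)_{i=1}^N$, and set
\[
\widehat\Sigma_{\eta,\delta} = \argmin_{A \in \mathbb{S}_+^d}\sup_{v \in S^{d-1}} |v^\top A v - T_\alpha(v)|.
\]
Since $\Sigma$ is admissible in the argmin, $\|\widehat\Sigma_{\eta,\delta} - \Sigma\| \le 2\sup_v |T_\alpha(v) - v^\top\Sigma v|$, reducing the problem to a uniform deviation estimate for $T_\alpha$.

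Introduce the deterministic threshold $\tau = C_1 \kappa(p)^2 \|\Sigma\| \alpha^{-2/p}$, chosen via Markov's inequality on $|\langle X,v\rangle|^p$ and the $L_p-L_2$ norm equivalence so that $\Pr(\langle X,v\rangle^2 > \tau) \le \alpha/4$. The sandwich step, which is the heart of the argument, shows that on a high-probability event,
\[
\frac{1}{N}\sum_i \phi_-(\langle X_i,v\rangle^2) - \Delta \le T_\alpha(v) \le \frac{1}{N}\sum_i \phi_+(\langle X_i,v\rangle^2) + \Delta
\]
holds uniformly in $v$, where $\phi_\pm$ are $1$-Lipschitz truncations at levels $\tau/C_2$ and $C_2 \tau$, and $\Delta \lesssim \eta \tau \lesssim \kappa(p)^2 \|\Sigma\| \eta^{1-2/p}$ accounts for the at most $\eta N$ corrupted observations that survive trimming.

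Each sum $\frac{1}{N}\sum_i \phi_\pm(\langle X_i,v\rangle^2)$ is then compared to $v^\top\Sigma v$ via bias plus fluctuation. The bias equals $\E \langle X,v\rangle^2 \ind{\langle X,v\rangle^2 > \tau}$, which by integration of the $p/2$-th moment tail is $\lesssim \kappa(p)^p \|\Sigma\|^{p/2}/\tau^{p/2-1} \lesssim \kappa(p)^2 \|\Sigma\| \alpha^{1-2/p}$. The fluctuation is handled by symmetrization and the Ledoux-Talagrand contraction principle applied to the $1$-Lipschitz map $\phi_\pm$ vanishing at $0$, giving
\[
\E\sup_v \left|\frac{1}{N}\sum_i \phi_\pm(\langle X_i,v\rangle^2) - \E\phi_\pm(\langle X,v\rangle^2)\right| \lesssim \E\sup_v\left|\frac{1}{N}\sum_i \varepsilon_i \langle X_i,v\rangle^2\right| \le C(p)\|\Sigma\|\sqrt{\frac{\mathbf{r}(\Sigma)}{N}},
\]
which is precisely Corollary \ref{cor:baiyin}. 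A Talagrand-type inequality, exploiting $\E\phi_\pm(\langle X,v\rangle^2)^2 \le \E\langle X,v\rangle^4 \le \kappa(p)^4\|\Sigma\|^2$ and $\|\phi_\pm\|_\infty \le \tau$, upgrades this to a high-probability bound with an additional $\kappa(p)^2\|\Sigma\|\sqrt{\log(1/\delta)/N}$ term; combining with the bias and the elementary inequality $(\log(1/\delta)/N)^{1-2/p} \le \sqrt{\log(1/\delta)/N}$ (valid for $p \ge 4$), one recovers the target rate.

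The main obstacle is the uniform quantile estimate behind the sandwich: the count $|\{i : \langle X_i,v\rangle^2 > \tau\}|$ must be controlled within a constant factor of $\alpha N/4$ uniformly in $v \in S^{d-1}$, with only $L_p$ moments available and without picking up any logarithmic factor in $\mathbf{r}(\Sigma)$. I would handle this by dominating $\ind{\langle x,v\rangle^2 > \tau} \le \langle x,v\rangle^2/\tau$ and invoking Corollary \ref{cor:baiyin} once more on the symmetrized supremum---this is exactly the step that would fail under a matrix-Bernstein bound (producing an unwanted $\log \mathbf{r}(\Sigma)$), and is the reason the dimension-free Bai-Yin theorem is the essential new ingredient for the regime $p > 4$.
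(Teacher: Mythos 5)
Your architecture is essentially the paper's: truncate norms at $R=(N\tr(\Sigma)\|\Sigma\|)^{1/4}$, form a direction-dependent truncation calibrated by an empirical quantile, sandwich the resulting statistic between two Lipschitz-truncated empirical means, bound the bias via an $L_p$ Markov tail estimate, and bound the fluctuation via Corollary~\ref{cor:baiyin} plus Talagrand's concentration inequality. The cosmetic deviations from the paper---a hard trim $\ind{\langle X_i,v\rangle^2\le q_\alpha(v)}$ instead of the soft cap at $q_v+Q$, no explicit sample split to decouple the quantile from the second pass, and no Lepskii aggregation over $Q=2^i$---leave the skeleton intact and are harmless once the uniform quantile estimate is in place, since the sandwich replaces the data-dependent trim level by fixed thresholds.

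The genuine gap is in the uniform quantile estimate, and it is twofold. First, the domination $\ind{\langle x,v\rangle^2>\tau}\le\langle x,v\rangle^2/\tau$ is too crude: it is uncentered and its expectation is $v^\top\wt\Sigma v/\tau\le\|\Sigma\|/\tau$, which only uses the second moment. With $\tau=C_1\kappa(p)^2\|\Sigma\|\alpha^{-2/p}$ this gives $\|\Sigma\|/\tau=\alpha^{2/p}/(C_1\kappa(p)^2)\gg\alpha$ when $\alpha$ is small, so the count cannot be controlled. What is actually needed is the one-sided Lipschitz bump $\xi$ of Lemma~\ref{lem:empirical_quantile}: it is sandwiched between $\ind{x>\tau}$ and $\ind{x>\tau/2}$, so its expectation can be bounded by the $p/2$-th moment Markov inequality, $\E\xi\lesssim(\kappa(p)^2\|\Sigma\|/\tau)^{p/2}\lesssim\alpha$, while symmetrization and contraction applied to $\xi$ feed into Corollary~\ref{cor:baiyin} for the centered fluctuation.

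Second, even with the bump, your threshold $\tau$ is missing a term. After contraction the fluctuation of the count is of order $\frac{\|\Sigma\|}{\tau}\sqrt{\mathbf{r}(\Sigma)/N}$, and for this to be $\lesssim\alpha$ you need
\[
\tau\gtrsim\frac{1}{\alpha}\|\Sigma\|\sqrt{\frac{\mathbf{r}(\Sigma)+\log(1/\delta)}{N}},
\]
which your $\tau=C_1\kappa(p)^2\|\Sigma\|\alpha^{-2/p}$ fails to satisfy whenever $\alpha\lesssim(\mathbf{r}(\Sigma)/N)^{p/(2p-4)}$, a regime that certainly occurs for moderate $\delta$ and $\eta=0$. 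This is exactly why the paper's $Q_0$ is the maximum of the term you wrote down and $\frac{C_{Q_0}}{\varepsilon}\|\Sigma\|\sqrt{(\mathbf{r}(\Sigma)+\log(2/\delta))/N}$; the second term absorbs the Bai--Yin fluctuation in the quantile lemma. The fix is easy---take $\tau$ to be the maximum of the two terms---and it does not degrade the final rate, since $\alpha\cdot\frac{1}{\alpha}\|\Sigma\|\sqrt{(\mathbf{r}(\Sigma)+\log(1/\delta))/N}$ is already the target; but as written the sandwich step fails.
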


Let us first recall the form of our estimator in the case where $p = 4$. For some specifically chosen $\lambda > 0$, the estimator in Theorem \ref{thm:thecasepfour} is defined by the following set
\[
\Gamma = \bigcap\limits_{v \in S^{d - 1}}\left\{A \in \mathbb{S}_{+}^d: \left|\frac{1}{\lambda N}\sum\limits_{i=1}^N \psi\left(\lambda\langle\widetilde{X}_i,v\rangle^2\right) - v^{\top}A v\right| \le C\lambda\kappa^4\|\Sigma\|^2/2\right\},
\]
where $\psi(\cdot)$ is the truncation function given by \eqref{eq:truncfunction}.
We defined $\widehat{\Sigma}_{\eta,\delta}$ to be any matrix in $\Gamma$.
In the regime $p > 4$, we introduce a  more involved estimator that uses a direction dependent value of the one-sided threshold. For simplicity, we assume that we observe a sample of size $2N$. For a set of positive semi-definite matrices $\Gamma$, we define its diameter as $\Delta(\Gamma)=\sup_{A,B \in \Gamma}\|A-B\|$.
\medskip
\begin{tcolorbox}
\label{Box:Estimator}
The estimator in Theorem \ref{thm:thecasep_greater_four}
\begin{enumerate}
    \item Given the confidence level $\delta$, the corruption level $\eta$ and the $\eta$-corrupted sample $\widetilde{Y}_1,\ldots,\widetilde{Y}_{2N}$, we split the sample in two parts of equal size: Truncate each vector $\widetilde{Z}_i=\widetilde{Y}_i \ind{\|\widetilde{Y}_i\|\le R}$ for $i\in \{1,\ldots,N\}$ and $\widetilde{X}_i=\widetilde{Y}_i \ind{\|\widetilde{Y}_i\|\le R}$ for $i\in \{N+1,\ldots,2N\}$, where $R=(N\tr(\Sigma)\|\Sigma\|)^{1/4}$.
    \item Set $\varepsilon = \max\left(20\eta, 560\frac{\log(2/\delta)}{N}\right)$.
    \item For the first half of the sample, define $q_v=(\langle \widetilde{Z}_i,v\rangle^2)_{N\varepsilon/2}^{\ast}$.
    \item For the second half of the sample $\widetilde{X}_1,\ldots,\widetilde{X}_N$, we proceed as follows: For every $Q>0, v\in S^{d - 1}$, define the trimming level $\lambda_v(Q) = (q_v +Q)^{-1}$ and set
    
      \begin{equation*}
        \Gamma(Q)= \bigcap\limits_{v\in S^{d-1}}\left\{A \in \mathbb{S}_{+}^d: \left|\frac{1}{\lambda_v(Q)N}\sum_{i=1}^N \psi\left(\lambda_v(Q)\langle\widetilde{X}_i,v\rangle^2\right) - v^{\top}A v\right| \le 4\varepsilon Q\right\}.
    \end{equation*}
    
    \item
    Let $\widehat{\Sigma}_{\eta,\delta}$ be any matrix in the set $\Gamma(2^{i^{\ast}})$, where $i^{\ast}$ minimizes the diameter $\Delta(\Gamma(2^{i}))$ over all integers $i$ subject to be non-empty.
\end{enumerate}
\end{tcolorbox}

\begin{remark}
Due to the bounds in Section \ref{sec:estimtraceandopernorm}, we explicitly assume the knowledge of the operator norm $\|\Sigma\|$ and the effective rank $\mathbf{r}(\Sigma)$ up to multiplicative constant factors.
\end{remark}
Our first technical observation is that, by truncating the norms of the vectors at the level $R = (N\tr(\Sigma)\|\Sigma\|)^{1/4}$, the covariance matrix does not change too much. At the same time, the truncation allows us to apply Theorem \ref{thm:baiyin} in our analysis.

\begin{proposition}
\label{prop:truncdoesnothurt}
Assume that $Y$ is a zero mean random vector with covariance $\Sigma$ satisfying the $L_p-L_2$ \emph{norm equivalence} with $p \ge 4$. Consider the truncated vector $X = Y\ind{\|Y\| \le (N\tr(\Sigma)\|\Sigma\|)^{1/4}}$ and define $\widetilde \Sigma = \E X\otimes X$. It holds that
\[
\|\Sigma - \widetilde \Sigma\| \le \kappa(4)^4\|\Sigma\|\sqrt{\frac{\mathbf{r}(\Sigma)}{N}}.
\]
Moreover, $\|\widetilde \Sigma\| \le \|\Sigma\|$ and $\tr(\widetilde \Sigma) \le \tr(\Sigma)$.
\end{proposition}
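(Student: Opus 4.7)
The plan is to handle both claims by a direct variational argument combined with hypercontractivity at $q = 4$. For the monotonicity statement, I would first observe that for any fixed $v \in S^{d-1}$,
\[
v^{\top} \widetilde\Sigma v = \E \langle Y, v\rangle^2 \ind{\|Y\| \le R} \le \E \langle Y, v\rangle^2 = v^{\top} \Sigma v,
\]
so the matrix $\Sigma - \widetilde\Sigma$ is positive semi-definite. From this both $\|\widetilde\Sigma\| \le \|\Sigma\|$ (by choosing $v$ to be the top eigenvector of $\widetilde\Sigma$) and $\tr(\widetilde\Sigma) \le \tr(\Sigma)$ (by summing the above over any orthonormal basis) follow immediately.

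For the quantitative bound, since $\Sigma - \widetilde\Sigma \succeq 0$, its operator norm coincides with
\[
\sup_{v \in S^{d-1}} \E \langle Y, v\rangle^2 \ind{\|Y\| > R}.
\]
I would then apply the Cauchy-Schwarz inequality to split this into the product of a fourth moment factor and a tail probability factor. The $L_p-L_2$ norm equivalence with $q = 4$ (which is available since $p \ge 4$) gives $(\E \langle Y, v\rangle^4)^{1/2} \le \kappa(4)^2 \E \langle Y, v\rangle^2 \le \kappa(4)^2 \|\Sigma\|$. For the tail probability, the computation in the proof of Lemma \ref{lem:normofthevec} already shows that $\E\|Y\|^4 \le \kappa(4)^4 \tr(\Sigma)^2$; combined with Markov's inequality and the choice $R = (N \tr(\Sigma)\|\Sigma\|)^{1/4}$, this yields
\[
\Pr(\|Y\| > R) \le \frac{\kappa(4)^4 \tr(\Sigma)^2}{R^4} = \frac{\kappa(4)^4 \mathbf{r}(\Sigma)}{N}.
\]
Multiplying the two bounds gives exactly $\kappa(4)^4 \|\Sigma\| \sqrt{\mathbf{r}(\Sigma)/N}$.

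There is no real obstacle here; the proof is essentially a calibration argument. The only conceptually important point to highlight is that the truncation radius $R = (N\tr(\Sigma)\|\Sigma\|)^{1/4}$ was chosen precisely so that the Markov bound on $\Pr(\|Y\| > R)$ pairs with the fourth-moment estimate to produce the rate $\sqrt{\mathbf{r}(\Sigma)/N}$, which matches (up to constants) the leading statistical error in Theorem \ref{thm:baiyin}. Thus the proposition shows that the price of the norm truncation needed to apply the Bai-Yin type result is negligible for the final covariance estimation bound.
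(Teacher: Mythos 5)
Your proof is correct and is essentially the same argument as the paper (which defers to the proof of Lemma~2.1 in \cite{mendelson2020robust}): writing $\|\Sigma-\widetilde\Sigma\|$ as $\sup_v \E\langle Y,v\rangle^2\ind{\|Y\|>R}$ using positive semi-definiteness, then Cauchy--Schwarz with the hypercontractive fourth-moment bound $(\E\langle Y,v\rangle^4)^{1/2}\le\kappa(4)^2\|\Sigma\|$ and the Markov tail bound $\Pr(\|Y\|>R)\le\kappa(4)^4\mathbf{r}(\Sigma)/N$ coming from $\E\|Y\|^4\le\kappa(4)^4\tr(\Sigma)^2$. The constants work out exactly, and the observation that $\Sigma-\widetilde\Sigma\succeq 0$ gives the monotonicity claims is also what the paper uses.
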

\begin{proof}
The proof follows from the proof Lemma 2.1 in \cite{mendelson2020robust}. For the second part of the proof we observe that $\Sigma - \widetilde \Sigma$ is positive semi-definite. 
\end{proof}

The result of Proposition \ref{prop:truncdoesnothurt} allows us to focus on estimating the matrix of second moments $\widetilde \Sigma$. Indeed, by the triangle inequality, the estimator $\widehat{\Sigma}_{\eta,\delta}$ satisfies
\[
\|\widehat{\Sigma}_{\eta,\delta} - \Sigma\| \le \|\widehat{\Sigma}_{\eta,\delta} - \widetilde\Sigma\| + \kappa(4)^4\|\Sigma\|\sqrt{\frac{\mathbf{r}(\Sigma)}{N}},
\]
and the last term only affects the multiplicative constant factors in our bound. We proceed by defining the following quantity
\begin{equation*}
    Q_0 = C_{Q_0}\max\left\{\frac{1}{\varepsilon}\|\Sigma\|\sqrt{\frac{\mathbf{r}(\Sigma)+\log(2/\delta)}{N}}, \varepsilon^{-2/p}\kappa(p)^2\|\Sigma\|\right\}.
\end{equation*}
Here $C_{Q_0} = C_{Q_0}(p)>0$ is to be chosen later, depends only on $p$ and is non-increasing with respect to $p$. The first term in the definition above is responsible for the rate in the uncontaminated case and the second term captures the rate of $\eta$ according to the value of $p$. We start with an important lemma that is the second order analog of \cite[Lemma 1]{lugosi2021robust}. For the rest of this section, $\widetilde{\Sigma}$ is the matrix of second moments of the truncated vector $X = Y\ind{\|Y\| \le (N\tr(\Sigma)\|\Sigma\|)^{1/4})}$ and $\varepsilon = \max\left(20\eta, 560\frac{\log(2/\delta)}{N}\right)$.

\begin{lemma}
\label{lem:empirical_quantile}
Let $Y \in \mathbb{R}^d$ be a mean zero random vector satisfying the $L_p-L_2$ norm equivalence assumption with $p>4$. Let $Y_1,\ldots,Y_N$ be i.i.d. copies of $Y$ and set $Z_i=Y_i\ind{\|Y_i\| \le (N\tr(\Sigma)\|\Sigma\|)^{1/4})}$ for every $i\in [N]$, then, with probability at least $1-\delta/2$,
\begin{equation*}
    \sup_{v\in S^{d-1}}\left|\left\{i \in [N]:\langle Z_i,v\rangle^2-v^{\top}\widetilde{\Sigma} v \ge Q_0\right\}\right|\le \frac{\varepsilon}{4}N.
\end{equation*}
\end{lemma}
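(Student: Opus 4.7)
The plan is to follow the classical outlier-counting strategy of Lugosi--Mendelson \cite{lugosi2021robust}, but to replace their crude Cauchy--Schwarz control of the centered quadratic empirical process by a sharper bound coming from the dimension-free Bai--Yin Corollary \ref{cor:baiyin}. This is what makes the effective-rank dependence (rather than the trace) appear in $Q_0$.

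Introduce the Lipschitz upper envelope of the indicator,
\[
\phi_Q(x) = \min\bigl(1,\max(0, 2(x - Q/2)/Q)\bigr),
\]
which satisfies $\phi_Q(0) = 0$, $\phi_Q(x) \ge \ind{x \ge Q}$ and is $(2/Q)$-Lipschitz. Setting $T_v(z) = \langle z,v\rangle^2 - v^{\top}\widetilde{\Sigma} v$, we have
\[
\frac{1}{N}\bigl|\{i:\,T_v(Z_i)\ge Q_0\}\bigr| \;\le\; \frac{1}{N}\sum_{i=1}^N \phi_{Q_0}(T_v(Z_i)),
\]
so it suffices to bound the latter uniformly over $v\in S^{d-1}$ by $\varepsilon/4$.

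First I would bound the pointwise expectation. Since $\phi_{Q_0}(T_v(Z))\le\ind{\langle Y,v\rangle^2\ge Q_0/2}$, Markov's inequality together with the $L_p$--$L_2$ norm equivalence gives
\[
\E\,\phi_{Q_0}(T_v(Z)) \;\le\; \Pr\bigl(|\langle Y,v\rangle|^p \ge (Q_0/2)^{p/2}\bigr) \;\le\; \frac{(2\kappa(p)^2\|\Sigma\|)^{p/2}}{Q_0^{p/2}}.
\]
For $Q_0 \ge C_{Q_0}\,\varepsilon^{-2/p}\kappa(p)^2\|\Sigma\|$ this is at most $\varepsilon/(8C_0)$, where $C_0$ is an absolute constant to be fixed at the end.

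Second, I would control the centered process in expectation. Symmetrization, the Ledoux--Talagrand contraction principle applied to $\phi_{Q_0}$ (which vanishes at $0$ and has Lipschitz constant $2/Q_0$), and the decomposition $T_v(Z_i) = \langle Z_i,v\rangle^2 - v^{\top}\widetilde\Sigma v$ give
\[
\E\sup_{v\in S^{d-1}} \Bigl|\tfrac{1}{N}\sum_i \phi_{Q_0}(T_v(Z_i)) - \E\phi_{Q_0}(T_v(Z))\Bigr| \;\le\; \frac{4}{Q_0}\,\E\sup_{v\in S^{d-1}}\Bigl|\tfrac{1}{N}\sum_i \varepsilon_i \langle Z_i,v\rangle^2\Bigr| + \frac{4\|\widetilde\Sigma\|}{Q_0\sqrt{N}},
\]
where the last term absorbs the contribution of the symmetrized deterministic offset $v^{\top}\widetilde\Sigma v \cdot \tfrac{1}{N}\sum_i\varepsilon_i$. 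Corollary \ref{cor:baiyin} bounds the Rademacher term by $C(p)\|\Sigma\|\sqrt{\mathbf{r}(\Sigma)/N}$, and Proposition \ref{prop:truncdoesnothurt} gives $\|\widetilde\Sigma\|\le\|\Sigma\|$. Hence for $Q_0 \ge C_{Q_0}\,\varepsilon^{-1}\|\Sigma\|\sqrt{(\mathbf{r}(\Sigma)+\log(2/\delta))/N}$, the whole expected supremum is at most $\varepsilon/(8C_0)$.

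Third, I would upgrade to high probability by Talagrand's inequality for empirical processes of functions with values in $[0,1]$: with probability at least $1-\delta/2$,
\[
\sup_{v} \tfrac{1}{N}\!\sum_i \phi_{Q_0}(T_v(Z_i)) \;\le\; 2\,\E\sup_v \tfrac{1}{N}\!\sum_i \phi_{Q_0}(T_v(Z_i)) + c\sqrt{\frac{\sigma^2\log(2/\delta)}{N}} + \frac{c\log(2/\delta)}{N},
\]
where $\sigma^2 \le \sup_v \E\phi_{Q_0}(T_v(Z)) \le \varepsilon/(8C_0)$ by the pointwise bound above. Using $\varepsilon \ge 560\log(2/\delta)/N$, both remainder terms are controlled by a small multiple of $\varepsilon$, and combining with the previous two steps yields the desired $\varepsilon/4$ bound after choosing $C_{Q_0}=C_{Q_0}(p)$ large enough.

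The main obstacle is the second step: the na\"ive matrix Bernstein bound on the centered quadratic process would produce a spurious $\log \mathbf{r}(\Sigma)$ factor that destroys the optimal $\eta^{1-2/p}$ rate. This is precisely why Theorem \ref{thm:baiyin} (and the truncation at radius $R = (N\tr(\Sigma)\|\Sigma\|)^{1/4}$ baked into the definition of $Z_i$) is needed here. Verifying that the contraction principle is applicable in exactly the form required, and that the $v^{\top}\widetilde\Sigma v$ term that appears after splitting $T_v$ is genuinely negligible, are the two places where I would need to be careful.
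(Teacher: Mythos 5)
Your proposal matches the paper's proof essentially step for step: you introduce the same $(2/Q_0)$-Lipschitz envelope (the paper calls it $\xi$), bound the pointwise expectation by Markov plus the $L_p$--$L_2$ equivalence, control the centered process by symmetrization and Ledoux--Talagrand contraction followed by Corollary \ref{cor:baiyin} (with the deterministic offset $v^{\top}\widetilde\Sigma v$ contributing an $O(\|\Sigma\|/\sqrt{N})$ term, negligible since $\mathbf{r}(\Sigma)\ge 1$), and upgrade to high probability via Talagrand's concentration inequality using that $\phi_{Q_0}$ takes values in $[0,1]$. You also correctly identify the role of the truncation and of Theorem \ref{thm:baiyin} in avoiding the spurious $\log\mathbf{r}(\Sigma)$; the argument is correct and is the same as in the paper.
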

\begin{proof}
For simplicity of notation, we write $\overline{Z}_i(v)=\langle Z_i,v\rangle^2-v^{\top}\widetilde{\Sigma} v$. The proof is a standard application of a small ball argument in empirical process theory. Consider a function $\xi:\mathbb{R}\rightarrow \mathbb{R}$ defined by 
\begin{equation*}
    \xi(x)= \begin{cases}
      0, &x\le Q_0/2,\\
      \frac{2x}{Q_0} -1, &x\in (Q_0/2,Q_0], \\
      1, &x>Q_0.
    \end{cases}
\end{equation*}

It is clear that $\ind{\overline{Z}_i(v)\ge Q_0}\le \xi(\overline{Z}_i(v))\le\ind{\overline{Z}_i(v)\ge Q_0/2} $ and $\xi$ is a Lipschitz function with constant $2/Q_0$. By symmetrization \cite{gine1984some} and the contraction lemma for Rademacher processes \cite[Theorem 4.4]{ledoux2013probability}, we have
\begin{equation*}
\begin{split}
\mathbb{E}\sup_{v\in S^{d-1}}\frac{1}{N}\sum_{i=1}^N \ind{\overline{Z}_i(v)\ge Q_0} &\le \mathbb{E}\sup_{v\in S^{d-1}}\frac{1}{N}\sum_{i=1}^N \xi(\overline{Z}_i(v))\\
& \le 2\mathbb{E}\sup_{v\in S^{d-1}}\frac{1}{N}\sum_{i=1}^N \left|\varepsilon_i\xi(\overline{Z}_i(v))\right| + \sup_{v\in S^{d-1}}\mathbb{E}\xi(\overline{Z}_i(v))\\
&\le \frac{4}{Q_0}\mathbb{E}\sup_{v\in S^{d-1}}\frac{1}{N}\left|\sum_{i=1}^N \varepsilon_i\overline{Z}_i(v)\right| + \sup_{v\in S^{d-1}}\mathbb{E}\xi(\overline{Z}(v)).
\end{split}
\end{equation*}
Now, we define $C_{BY} = C_{BY}(p)>0$ to be the constant in the conclusion of Corollary \ref{cor:baiyin}. We remark that $C_{BY}$ is at most an absolute constant for any $p \ge c > 4$, where $c$ is another absolute constant. We now apply Corollary \ref{cor:baiyin} and the same arguments as in its proof and obtain
\begin{equation*}
\begin{split}
    \mathbb{E}\sup_{v \in S^{d - 1}}\frac{1}{N}\sum_{i=1}^N \left|\varepsilon_i (\langle Z_i,v\rangle^2- v^{\top}\widetilde{\Sigma}v)\right| &\le \mathbb{E}\sup_{v \in S^{d - 1}}\frac{1}{N}\left|\sum_{i=1}^n \varepsilon_i \langle Z_i,v\rangle^2\right| + \frac{\|\Sigma\|}{N}\mathbb{E}\left|\sum_{i=1}^N \varepsilon_i\right| \\
    &\leq 2C_{BY} \|\Sigma\|\sqrt{\frac{\mathbf{r}(\Sigma)}{N}} \le \frac{Q_0\varepsilon}{128}.
\end{split}
\end{equation*}
The last steps follow from the fact that $\mathbf{r}(\Sigma)\ge 1$ and $\|\widetilde{\Sigma}\|\le \|\Sigma\|$. By the definition of $Q_0$ we conclude that the first term is at most $\frac{\varepsilon}{32}$ if we choose a sufficiently large constant $C_{Q_0}>0$. We now proceed to bound the second term. We use Markov's inequality together with the $L_p-L_2$ norm equivalence to obtain
\begin{equation*}
\begin{split}
\mathbb{E}\xi\left(\langle Z,v\rangle^2-v^{\top}\widetilde{\Sigma} v\right) \le \mathbb{P}\left(\langle Z,v\rangle^2\ge Q/2+v^{\top}\widetilde{\Sigma} v\right) &\leq \frac{\kappa(p)^p}{\left(Q/2+v^{\top}\widetilde{\Sigma} v\right)^{p/2}}(\mathbb{E}\langle Z,v\rangle^2)^{p/2}\\
&\leq \left(\frac{2\kappa(p)^2\|\Sigma\|}{Q}\right)^{p/2}.
\end{split}
\end{equation*}
Again, by the definition of $Q_0$, we can choose $C_{Q_0}>0$ such that the right hand side above is at most $\frac{\varepsilon}{32}$. By Talagrand's concentration inequality for supremum of empirical process (Massart's version \cite{massart2000constants}), with probability at least $1-\exp(-x)$,
\begin{equation*}
\sup_{v\in S^{d-1}}\frac{1}{N}\sum_{i=1}^N \ind{\overline{Z}_i(v)\ge Q} \le \frac{\varepsilon}{8} + \sqrt{\frac{8x}{N}}\frac{\sqrt{\varepsilon}}{8} + \frac{35 x}{N}.
\end{equation*}
We choose $x = \varepsilon N/560$ to conclude the proof.
\end{proof}
\noindent  We now describe the first part of the estimation procedure. First, we assume without loss of generality that $\varepsilon < 1$. When the conclusion of the previous lemma holds, we immediately obtain that, for every $v\in S^{d-1}$ and $Q\in (2Q_0,4Q_0)$,
\begin{equation*}
\quad q_v -v^{\top}\widetilde{\Sigma} v + Q \le 5Q_0 \quad \text{and} \quad q_v -v^{\top}\widetilde{\Sigma} v + Q \ge Q_0.
\end{equation*}
To see why the latter condition holds, suppose by contradiction that $q_v -v^{\top}\widetilde{\Sigma} v + Q \le Q_0$, then $q_v \le v^{\top}\widetilde{\Sigma} v -Q_0$. Since $\varepsilon < 1$, by the choice of $Q_0$, we have $Q_0 > \|\Sigma\|$ and then we get $0\le q_v \le v^{\top}\widetilde{\Sigma} v -Q_0 <0$, the latter is clearly a contradiction. Moreover, both inequalities above still hold for an $\eta$-corrupted sample, since $\varepsilon\ge 20\eta$ and by the definition of $\varepsilon$, we have that the fraction of points greater than $Q_0$ is at most $\frac{\varepsilon}{4}+\eta \le \varepsilon(\frac{1}{20}+\frac{1}{4})<\frac{\varepsilon}{2}$. Thus, we can use $\widetilde{Z}_1,\ldots,\widetilde{Z}_N$ to estimate $q_v$, and since they are independent from the second half of the sample, from now on we work conditionally on the event of Lemma \ref{lem:empirical_quantile}.

The second part of the analysis consists in proving that $\Gamma(v,Q)$ is non-empty. The formal statement is the proposition below.

\begin{proposition}
\label{prop:U_Q(v)}
Under the notation of Theorem \ref{thm:thecasep_greater_four}, fix $Q \in [2Q_0,4Q_0]$ and assume that the event of Lemma \ref{lem:empirical_quantile} holds. There exists an absolute constant $C>0$ such that if $C_{Q_0}\ge C$, then, with probability at least $1-\delta/2$, 
\begin{equation*}
    \sup_{v\in S^{d-1}} \left|\frac{1}{\lambda_v(Q)N}\sum_{i=1}^N \psi\left(\lambda_v(Q)\langle \widetilde{X}_i,v\rangle^2\right) - v^{\top}\widetilde{\Sigma} v\right| \le 4\varepsilon Q.
\end{equation*}
\end{proposition}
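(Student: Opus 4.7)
The plan is to recognize that, for $x \ge 0$, $\frac{1}{\lambda}\psi(\lambda x) = \min(x, 1/\lambda)$, so the estimator equals
$$\widehat{\mu}_v := \frac{1}{N}\sum_{i=1}^N \min\bigl(\langle \widetilde X_i, v\rangle^2,\, T_v\bigr), \qquad T_v = q_v + Q.$$
Under the event of Lemma~\ref{lem:empirical_quantile} and $\eta \le \varepsilon/20$, the two-sided bound $T_v - v^\top\widetilde\Sigma v \in [Q_0, 5Q_0]$ derived just before the proposition gives $Q_0 \le T_v \le \|\Sigma\| + 5Q_0 \asymp Q$, once $C_{Q_0}$ is large enough that $Q_0 \ge \|\Sigma\|$ (which follows from the second summand of $Q_0$ and $\varepsilon \le 1$). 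Replacing the at most $\eta N$ corrupted vectors by the corresponding clean truncated $X_i$ therefore changes each directional sum by at most $\eta T_v \lesssim \varepsilon Q$, so it suffices to prove the bound for the clean sample.

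Using the decomposition $\min(y^2, T_v) = y^2 - (y^2 - T_v)_+$, I would split
$\widehat\mu_v - v^\top\widetilde\Sigma v = (A_v - v^\top\widetilde\Sigma v) - B_v,$
where $A_v = \frac{1}{N}\sum_i \langle X_i, v\rangle^2$ and $B_v = \frac{1}{N}\sum_i (\langle X_i, v\rangle^2 - T_v)_+ \ge 0$. For the quadratic piece, Corollary~\ref{cor:baiyin} gives $\E\sup_v |A_v - v^\top\widetilde\Sigma v| \lesssim \|\Sigma\|\sqrt{\mathbf r(\Sigma)/N}$, which after an application of Talagrand's concentration inequality (Bousquet's version) with variance $\le \kappa(4)^4\|\Sigma\|^2$ (by the $L_4$-$L_2$ equivalence) and uniform bound $2R^2 = 2\sqrt{N\tr(\Sigma)\|\Sigma\|}$ becomes $\lesssim \|\Sigma\|\sqrt{(\mathbf r(\Sigma) + \log(1/\delta))/N}$ with probability $1 - \delta/4$. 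By the first summand in the definition of $Q_0$ this is at most $\varepsilon Q$ divided by a large constant, provided $C_{Q_0}$ is big.

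For the tail piece, the monotonicity $(y^2 - T_v)_+ \le (y^2 - Q)_+$ (valid since $T_v \ge Q$) removes the direction-dependent threshold. Markov's inequality and the $L_p$-$L_2$ equivalence yield
$$\E(\langle X, v\rangle^2 - Q)_+ \le \int_Q^\infty \frac{\kappa(p)^p \|\Sigma\|^{p/2}}{s^{p/2}}\, ds \lesssim \frac{\kappa(p)^p\|\Sigma\|^{p/2}}{Q^{p/2-1}},$$
which, after substituting the lower bound $Q \ge C_{Q_0}\varepsilon^{-2/p}\kappa(p)^2\|\Sigma\|$ from the second summand of $Q_0$, is at most $\varepsilon Q / C_{Q_0}^{(p-2)/2}$, arbitrarily small once $C_{Q_0}$ is chosen large. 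The deviation of $\sup_v B_v$ from its mean is handled by symmetrization followed by the contraction principle applied to the $1$-Lipschitz map $x \mapsto (x-Q)_+$ (which vanishes at $0$), reducing the Rademacher average to $\E\sup_v |\tfrac{1}{N}\sum_i \epsilon_i \langle X_i, v\rangle^2|$, bounded again by Corollary~\ref{cor:baiyin}; Talagrand's inequality then promotes this to a high-probability statement. Adding the $\varepsilon Q$-order bounds from the quadratic piece, the tail piece, and the contamination error produces the required $4\varepsilon Q$ uniform bound with probability $1 - \delta/2$.

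The chief technical obstacle, as in the analogous step of \cite{lugosi2021robust}, is the Talagrand concentration: one needs to verify that the range contribution $R^2\log(1/\delta)/N$ does not overwhelm $\varepsilon Q$. This works out thanks to the preliminary norm truncation at $R$ (which keeps $R^2 = \sqrt{N\tr(\Sigma)\|\Sigma\|}$, so $R^2/N = \|\Sigma\|\sqrt{\mathbf r(\Sigma)/N}$) combined with the designed floor $\log(2/\delta) \le \varepsilon N/560$ built into the choice of $\varepsilon$; the two together reduce the required inequality to $\sqrt{\log(1/\delta)/N} \le \sqrt{\varepsilon/560}$, which is tautological. A secondary subtlety is that the contraction principle does not directly accommodate a $v$-dependent cut-off, which is precisely why the tail contribution is routed through the uniform threshold $Q$ rather than through $T_v$.
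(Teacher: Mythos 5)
Your decomposition $\min(\langle X_i,v\rangle^2, T_v) = \langle X_i,v\rangle^2 - (\langle X_i,v\rangle^2 - T_v)_+$ discards the very boundedness that is needed for the high-probability step, and the Talagrand application then fails. After the preliminary truncation, the summands of $A_v = \frac1N\sum_i\langle X_i,v\rangle^2$ are bounded only by $R^2 = \sqrt{N\tr(\Sigma)\|\Sigma\|} = \|\Sigma\|\sqrt{N\,\mathbf r(\Sigma)}$, so the range term in Bousquet's/Talagrand's inequality is of order $R^2\log(1/\delta)/N = \|\Sigma\|\sqrt{\mathbf r(\Sigma)/N}\,\log(1/\delta)$. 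You claim that the floor $\log(2/\delta)\le\varepsilon N/560$ reduces the needed inequality to a tautology, but substituting $\log(1/\delta)\le \varepsilon N/560$ turns the range term into $\le \varepsilon R^2/560 = \varepsilon\|\Sigma\|\sqrt{N\,\mathbf r(\Sigma)}/560$, which must be $\lesssim \varepsilon Q$. Since $Q\le 4Q_0$, this would require $\|\Sigma\|\sqrt{N\,\mathbf r(\Sigma)}\lesssim Q_0$; yet with $\|\Sigma\|=1$, $\mathbf r(\Sigma)=1$ and $\varepsilon$ fixed, the left side grows like $\sqrt N$ while $Q_0$ stays bounded. Equivalently, the required inequality $\sqrt{\mathbf r(\Sigma)}\,\log(1/\delta)\lesssim C_{Q_0}\sqrt{\mathbf r(\Sigma)+\log(1/\delta)}$ fails once $\log(1/\delta)$ is large, regardless of $\mathbf r(\Sigma)$. (What your computation actually shows is that the \emph{variance} term $\kappa^2\|\Sigma\|\sqrt{\log(1/\delta)/N}$ is under control, not the range term.) Exactly the same problem afflicts the $B_v$ piece: $(\langle X_i,v\rangle^2-Q)_+$ is still only bounded by $R^2$, so symmetrization/contraction controls the expectation, but the high-probability upgrade via Talagrand again produces a range term of order $R^2\log(1/\delta)/N$. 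This is not a technicality: the raw truncated quadratic process $\frac1N\sum_i\langle X_i,v\rangle^2$ does not concentrate at rate $\|\Sigma\|\sqrt{(\mathbf r(\Sigma)+\log(1/\delta))/N}$ with high probability, which is precisely why $\psi$ is built into the estimator.

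The paper's proof keeps the one-sided truncation intact. It rewrites $\frac{1}{\lambda_v(Q)}\psi(\lambda_v(Q)\langle X_i,v\rangle^2)-v^\top\widetilde\Sigma v$ as a centered truncated variable with level $\lambda_v^c(Q)$, upper bounds it by the uniformly truncated process $\overline U_Q(v)$ with window $[-Q_0,5Q_0]$, and then — this is the essential step you are missing — introduces the comparison process $\overline W_Q(v)$ truncated at $\pm 3Q$. After the secondary truncation the summands have range $\lesssim Q$, so the Talagrand range term is $\lesssim Q\log(1/\delta)/N \le \varepsilon Q/560$, which is harmless; the error incurred by the secondary truncation is paid separately through Lemma~\ref{lem:empirical_quantile} (the empirical count of exceedances of $5Q_0$) and a Markov bound (the expectation of the exceedance). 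To repair your argument you would need to cap $(\langle X_i,v\rangle^2-Q)_+$ at a level of order $Q$ before invoking Talagrand and carry out a similar accounting of the cap error — at which point the decomposition collapses back to the one in the paper.
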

It will be convenient to work with two a sided truncation function in the analysis. We define the following function for arbitrary positive $\lambda_1 \ge \lambda_2$,
\begin{equation*}
    \psi_{\lambda_1,\lambda_2}(x)=\begin{cases}
      \frac{1}{\lambda_2}, \quad x>\frac{1}{\lambda_2},\\
      x, \quad x\in [\frac{1}{\lambda_1},\frac{1}{\lambda_2}],\\
      \frac{1}{\lambda_1}, \quad x<\frac{1}{\lambda_1}.
    \end{cases}
\end{equation*}
Clearly, if $\lambda_1=\lambda_2=\lambda$, our definition agrees with \eqref{eq:truncfunction}. That is,  $\frac{1}{\lambda}\psi(\lambda x) = \psi_{\lambda,\lambda}(x)$.
\begin{proof}
First, for every $v\in S^{d-1}$, due to the assumption that the event of Lemma \ref{lem:empirical_quantile} holds, the uncorrupted observations satisfy 
\begin{equation*}
\left| \frac{1}{\lambda_v(Q)N}\sum_{i=1}^N \psi\left(\lambda_v(Q)\langle \widetilde{X}_i,v\rangle^2\right) - \frac{1}{\lambda_v(Q)N}\sum_{i=1}^N \psi\left(\lambda_v(Q)\langle X_i,v\rangle^2\right)\right| \le 2\eta Q \le \frac{\varepsilon Q}{10}.
\end{equation*}
We centralize the empirical process to obtain that
\begin{equation*}
\frac{1}{\lambda_v(Q)N}\sum_{i=1}^N \psi\left(\lambda_v(Q)\langle X_i,v\rangle^2\right) -v^{\top}\widetilde{\Sigma} v = \frac{1}{\lambda_v^{c}(Q)N}\sum_{i=1}^N \psi\left(\lambda_v^c(Q)(\langle X_i,v\rangle^2-v^{\top}\widetilde{\Sigma} v)\right),
\end{equation*}
where $\lambda_v^c(Q)=(q_v+Q-v^{\top}\widetilde{\Sigma} v)^{-1}$. To see why this identity holds, observe that both sides are equal except when $\langle X_i,v\rangle^2 - v^{\top}\widetilde{\Sigma} v \le -q_v - Q+v^{\top}\widetilde{\Sigma} v$. In this case, $0\le \langle X_i,v\rangle^2 \le -q_v -Q + 2v^{\top}\widetilde{\Sigma} v$. However, since $Q\ge 2\|\Sigma\|$ if $C_{Q_0}\ge 2$ and $\varepsilon < 1$, the latter inequality cannot be satisfied. Now, on the event of Lemma \ref{lem:empirical_quantile}, $Q_0\le q_v -v^{\top}\widetilde{\Sigma} v  + Q\le 5Q_0 $ and therefore
\begin{equation*}
\frac{1}{\lambda_v^{c}(Q)N}\sum_{i=1}^N \psi\left(\lambda_v^c(Q)(\langle X_i,v\rangle^2-v^{\top}\widetilde{\Sigma} v)\right) \le \frac{1}{N}\sum_{i=1}^N\psi_{-\frac{1}{Q_0},\frac{1}{5Q_0}}\left(\langle X_i,v\rangle^2-v^{\top}\widetilde{\Sigma} v\right).
\end{equation*}
We define $\overline{U}_Q(v)$ to be the right hand side in the inequality above. We first study how the empirical process of interest concentrates around the mean. To simplify the analysis, we consider a simpler quantity, $\overline{W}_Q(v)=\frac{1}{N}\sum_{i=1}^N \psi_{\frac{-1}{3Q},\frac{1}{3Q}}\left(\langle X_i,v\rangle^2 -v^{\top}\widetilde{\Sigma}v\right)$. Our starting point is
\begin{align*}
\sup_{v\in S^{d-1}}(\overline{U}_Q(v)- \mathbb{E}\overline{U}_Q(v)) &\le \sup_{v\in S^{d-1}}(\overline{U}_Q(v)- \overline{W}_Q(v)) 
 + \sup_{v\in S^{d-1}}(\overline{W}_Q(v)- \mathbb{E}\overline{W}_Q(v))
 \\
 &\qquad+ \sup_{v\in S^{d-1}}(\mathbb{E}\overline{W}_Q(v)- \mathbb{E}\overline{U}_Q(v))\\
&= (a) + (b) + (c).
\end{align*}
For the first term, observe that it is non-zero if and only if the argument of the function is above $5Q_0$ or below $-Q_0$. The latter cannot happen because of the same contradiction argument as above. The magnitude of the difference is at most $3Q$. By Lemma \ref{lem:empirical_quantile}, there are at most $\varepsilon N/4$ points in the range $(5Q_0,\infty)$, therefore, we obtain that the first term (a) is at most $\frac{3\varepsilon Q}{4}$. Similarly, we apply the Markov inequality together with the $L_p-L_2$ norm equivalence assumption to obtain that
\begin{equation*}
\begin{split}
\sup_{v\in S^{d-1}}\mathbb{E}\left(\overline{W}_Q(v) - \overline{U}_Q(v)\right) &\le 3Q\mathbb{P}(\langle X,v\rangle^2 - v^{\top}\widetilde{\Sigma} v >5Q_0)\\
&\leq 3Q \left(\frac{\kappa(p)^2\|\Sigma\|}{5Q_0}\right)^{p/2} \le \frac{3\varepsilon Q}{32}.
\end{split}
\end{equation*}
This concludes the bound for (c). The second term (b) is handled via Talagrand's concentration inequality for empirical processes (Massart's version \cite{massart2000constants}). Observe that, for every $v\in S^{d-1}$,
\begin{equation*}
\left|\psi_{-\frac{1}{3Q},\frac{1}{3Q}}\left(\langle X,v\rangle^2 - v^{\top}\widetilde{\Sigma} v\right)\right| \le 3Q \quad \text{and} \quad \mathbb{E}\left|\psi_{-\frac{1}{3Q},\frac{1}{3Q}}\left(\langle X,v\rangle ^2\right)\right|^2 \le \mathbb{E}\langle X,v\rangle^4 \leq \kappa(4)^4 \|\Sigma\|^2.
\end{equation*}
Moreover, for every $Q>0$, $\psi(\cdot)_{-\frac{1}{Q},\frac{1}{Q}}$ is a $1$-Lipschitz function that passes through the origin. By the same symmetrization and contraction arguments as in the proof of Lemma \ref{lem:empirical_quantile}, we obtain that
\begin{equation*}
\mathbb{E}\sup_{v\in S^{d-1}}\left|\overline{W}_Q(v) - \mathbb{E}\overline{W}_Q(v)\right| \le 2 \mathbb{E}\sup_{v\in S^{d-1}}\left|\sum_{i=1}^N \varepsilon_i \langle X_i,v\rangle^2\right| \leq 2C_{BY} \|\Sigma\| \sqrt{\frac{\mathbf{r}(\Sigma)}{N}}.
\end{equation*}
Therefore, by Talagrand's concentration inequality for empirical processes, with probability at least $1-2e^{-x}$, it holds that
\begin{equation*}
    \sup_{v\in S^{d-1}}|W_Q(v)-\mathbb{E}W_Q(v)| \leq 4C_{BY}\|\Sigma\|\sqrt{\frac{\mathbf{r}(\Sigma)}{N}} +\kappa(4)^2\|\Sigma\|\sqrt{\frac{8x}{N}} + 105Q \frac{x}{N}.
\end{equation*}
We choose $x = 2\log (2/\delta)$ to obtain that the right hand side above is less than $\frac{\varepsilon Q}{2}$. Now, the problem boils down to control uniformly the typical magnitude of $\overline{U}_Q(v)$. For simplicity, we define $\overline{X}_i(v)=\langle X_i,v\rangle^2-v^{\top}\widetilde{\Sigma}v$ and write
\begin{equation*}
\begin{split}
\sup_{v\in S^{d-1}}\mathbb{E}\overline{U}_Q(v)
&= \sup_{v\in S^{d-1}}\mathbb{E}[\psi_{-\frac{1}{Q_0},\frac{1}{5Q_0}}(\langle X,v\rangle^2 - v^{\top}\widetilde{\Sigma} v))(\ind{\overline{X}_i(v)\le 5Q_0}+\ind{\overline{X}_i(v)> 5Q_0})]\\
&\le \sup_{v\in S^{d-1}}5Q_0\mathbb{E}\ind{\overline{X}(v)\ge 5Q_0}\\
&\le 5Q\frac{\varepsilon}{64}.
\end{split}
\end{equation*}
The final bound for the first part becomes $\varepsilon Q(\frac{1}{10}+\frac{3}{4}+\frac{3}{32}+\frac{5}{64}+\frac{1}{2})\le 2\varepsilon Q$. For the other side bound, observe that 
\begin{equation*}
\begin{split}
v^{\top}\widetilde{\Sigma} v-\frac{1}{\lambda_v(Q)N}\sum_{i=1}^N\psi(\lambda_v(Q)\langle X_i,v\rangle^2) &= \frac{1}{\lambda_v^c(Q)N}\sum_{i=1}^N\psi\left(\lambda_v^c(Q)v^{\top}\widetilde{\Sigma} v- \langle X_i,v\rangle^2\right)\\
&\le \frac{1}{N}\sum_{i=1}^N\psi_{-\frac{1}{Q_0},\frac{1}{5Q_0}}\left(v^{\top}\widetilde{\Sigma} v-\langle X_i,v\rangle^2\right).
\end{split}
\end{equation*}
The same analysis follows and concludes the proof.
\end{proof}

The proof of Theorem \ref{thm:thecasep_greater_four} follows directly from Lemma \ref{lem:empirical_quantile} and Proposition \ref{prop:U_Q(v)}. Indeed, consider the event $\mathcal{E}$ where the events of both Lemma \ref{lem:empirical_quantile} and Proposition \ref{prop:U_Q(v)} hold. It occurs with probability at least $1-\delta$ as desired. Our next argument is standard in the literature \cite{lugosi2021robust} and is similar to Lepskii's method \cite{lepskii1992asymptotically}. Let $i_0$ be the smallest integer satisfying $2^{i_0} \in (2Q_0,4Q_0)$. By Lemma \ref{lem:empirical_quantile} we know that at most $\frac{\varepsilon N}{4}$ samples are outside of the range $q_v + 2^{i_0}$. We conclude that the the difference between $U_Q(v)$ and $U_{2Q}(v)$ is at most $\frac{\varepsilon Q}{2}$. By induction, this implies that $\Gamma(2^{i_0}) \subset \Gamma(2^{i_0+1})$ for every $i\ge i_0$. This implies that corresponding sets are nested. The diameter of the set $\bigcap_{i\ge i^{\ast}}\Gamma(2^{i})$ is at most $8\varepsilon Q$ and the matrix of second moments $\widetilde{\Sigma}$ belongs to an even smaller set. Thus, $\|\hat{\Sigma}-\widetilde{\Sigma}\|\le 8\varepsilon Q$. We conclude by applying Proposition \ref{prop:truncdoesnothurt}. \qed

\section{Optimality}
\label{sec:optimality}
It is natural to ask about the optimality of our statistical guarantees of Theorem \ref{thm:informalmain}. It is well known that the term $C\|\Sigma\| \sqrt{\mathbf{r}(\Sigma)/N}$ appears in the minimax lower bound for covariance estimation  \cite[Theorem 2]{lounici2014high} as well as in the lower bound for the performance of the sample covariance matrix in the Gaussian case \cite[Theorem 4]{koltchinskii2017operators}. Clearly, this term is necessary because the adversary can always leave the sample uncorrupted and the multivariate Gaussian distribution satisfies the norm equivalence assumption \eqref{eq:momeqv}. Further, we show that the term $C\|\Sigma\| \sqrt{\log(1/\delta)/N}$ cannot be improved in general. As shown in \cite{mendelson2020robust} (following the lower bound in \cite{lugosi2019near}), the term scaling as $R\sqrt{\log(1/\delta)/N}$ appears in the lower bound for any estimator of the covariance matrix. Here $R^2$ is the so-called \emph{weak variance} defined by 
\[
R^2 = \sup\nolimits_{v \in S^{d - 1}}\E\left(v^{\top}(X \otimes X - \E X \otimes X)v\right)^2.
\]
In the multivariate Gaussian case, using the standard relation between moments, we get
\[
R^2 \ge \sup\nolimits_{v \in S^{d - 1}}\E\langle v, X\rangle^4 - \|\Sigma\|^2 = 3\sup\limits_{v \in S^{d - 1}}(\E\langle v, X\rangle)^2 - \|\Sigma\|^2 = 2\|\Sigma\|^2.
\]
This shows the necessity of the term $C\|\Sigma\| \sqrt{\log(1/\delta)/N}$ in Theorem \ref{thm:informalmain}.

Finally, we can restrict the discussion to the convergence rate with respect to the fraction of corrupted samples $\eta$. Some closely related lower bounds with matching rates of convergence appear in the analysis of the sparse vector model \cite[Theorem 5 and Theorem 6]{comminges2021adaptive}. Their model can be seen as a special case of our setup when $d = 1$. Instead of exploiting their techniques for the lower bounds, we provide a separate analysis using explicit examples. 
Our key argument to derive the minimax optimality with respect to $\eta$ is to reduce our problem to a mean estimation problem. For the mean estimation, a simple computation shows a lower bound with respect to $\eta$. To describe this result, we first consider a basic definition.
\begin{definition}
For a random variable $X$, we define the quantile
\begin{equation*}
    Q_q(X) = \sup\{M\in \mathbb{R}: \mathbb{P}(X \ge M) \ge 1-q\}.
\end{equation*}
\end{definition}
Formally, the following simple lower bound holds.
\begin{proposition}[Inequality 2.3 in \cite{lugosi2021robust}]
Let $X$ be a random variable with mean $\mu$, variance $\sigma_X$, and with an absolutely continuous distribution. Suppose that  $\widetilde{X}_1,\ldots,\widetilde{X}_N$ is an $\eta$-corrupted sample sampled according to the distribution of $X$.
Let $\overline{X}=X-\mathbb{E}X$ and define $\epsilon(\overline{X},\eta)$ as follows
\begin{equation*}
\epsilon (\overline{X},\eta)= \max\left\{\mathbb{E}|\overline{X}-Q_{\eta/2}(\overline{X})|\ind{\overline{X}\le Q_{\eta/2}(\overline{X})},\mathbb{E}|\overline{X}-Q_{1-\eta/2}(\overline{X})|\ind{\overline{X}\ge Q_{1-\eta/2}(\overline{X})}\right\}.
\end{equation*}
Then, no estimator $\widehat{\mu}=\widehat{\mu}(\widetilde{X}_1,\ldots,\widetilde{X}_N)$ of the mean $\mu$  can perform better than
\begin{equation*}
    |\widehat{\mu} - \mu| \le \epsilon(\overline{X},\eta).
\end{equation*}
\end{proposition}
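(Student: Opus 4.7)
The plan is to reduce the estimation lower bound to a binary hypothesis testing problem via Le Cam's two-point method, exhibiting two distributions whose means are separated by $\epsilon(\overline{X},\eta)$ but whose $\eta$-corrupted samples become indistinguishable in law.

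Without loss of generality assume the upper-tail integral attains the maximum in the definition of $\epsilon(\overline{X},\eta)$, and set $a = Q_{1-\eta/2}(\overline{X})$, so that $\Pr(\overline{X} \ge a) = \eta/2$ by absolute continuity. Let $P$ be the law of $X$ (with mean $\mu$), and define $\widetilde{P}$ as the law obtained by leaving the mass of $P$ on $\{\overline{X} < a\}$ intact and reassigning the conditional mass on $\{\overline{X} \ge a\}$ to a single point mass at $\mu + a$. A direct calculation via the tower rule then gives
\[
\E_{\widetilde{P}} X \;=\; \mu \,-\, \E|\overline{X}-a|\ind{\overline{X} \ge a} \;=\; \mu - \epsilon(\overline{X},\eta),
\]
while by construction the total variation distance satisfies $\|P - \widetilde{P}\|_{TV} = \eta/2$.

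By the maximal coupling lemma there exists a joint distribution of $(U,V)$ with marginals $P$ and $\widetilde{P}$ such that $\Pr(U \ne V) = \eta/2$. Drawing $N$ i.i.d.\ copies of $(U,V)$, an adversary presented with the $U$-sample drawn from $P$ overwrites each $U_i$ by $V_i$ on the random disagreement set $S$, whose cardinality is Binomial with parameters $(N, \eta/2)$. A standard Chernoff bound ensures $|S| \le \eta N$ with overwhelming probability, and an arbitrarily small inflation of the construction absorbs the rare deviation event, so the resulting $\eta$-corrupted sample from $P$ has exactly the law of a clean sample from $\widetilde{P}$. Le Cam's two-point inequality now implies that for every estimator $\widehat{\mu}$ the worst-case error over the pair $\{P, \widetilde{P}\}$ is at least $\tfrac{1}{2}|\E_P X - \E_{\widetilde{P}} X| = \tfrac{1}{2}\epsilon(\overline{X},\eta)$; combining this with the symmetric lower-tail construction and absorbing the constant recovers the claim as written.

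The main technical obstacle I anticipate is the passage from the expected count of coupling disagreements to the deterministic $\eta N$ corruption budget of the strong contamination model; this is a routine concentration step for a Binomial random variable but must be handled carefully so that the resulting corrupted sample has the \emph{exact} law of an uncorrupted sample from $\widetilde{P}$. The rest of the proof is a standard application of the two-point method, made possible by the fact that the quantile-based quantity $\epsilon(\overline{X},\eta)$ is tailored precisely to the mean shift induced by redistributing an $\eta/2$-fraction of mass to a single quantile point.
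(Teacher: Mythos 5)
The paper does not prove this proposition; it imports it verbatim from Lugosi and Mendelson (their inequality~2.3), so there is no in-paper argument to compare against. Your construction --- collapse the tail of $P$ beyond the $(1-\eta/2)$-quantile to a single atom at $\mu+a$, note that this drops the mean by exactly $\E|\overline{X}-a|\ind{\overline{X}\ge a}$, and invoke a two-point indistinguishability argument --- is the standard route and, to the best of my knowledge, the one taken in the cited reference. The mean computation and the total-variation computation $d_{\mathrm{TV}}(P,\widetilde P)=\eta/2$ both check out (the latter because $\widetilde P$ is the pushforward of $P$ under $x\mapsto\min(x,\mu+a)$, which fixes the lower $1-\eta/2$ of the mass and replaces an atomless $\eta/2$ with an atom).

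Three points deserve attention. The first you flag yourself: the number of coupling disagreements is $\mathrm{Bin}(N,\eta/2)$, which exceeds the budget $\eta N$ with positive probability, so the corrupted sample does \emph{not} have exactly the law $\widetilde P^{\otimes N}$; after the ``inflation'' you mention you still only get approximate equality of laws, so the sentence as written overstates it. The standard patch is either to work with the $(1-c\eta)$-quantile for some $c<1/2$ so the Chernoff failure probability is small enough to be absorbed into the Le Cam bound, or to explicitly carry the resulting total-variation defect through the argument. Second, and not flagged: $\widetilde P$ has an atom at $\mu+a$, so it is not absolutely continuous and thus falls outside the stated class. This is not fatal --- the claim should be read as a minimax lower bound over a class closed under this tail-collapsing operation, or one can smooth the atom with an arbitrarily small mollifier without affecting the mean shift --- but the write-up should say this explicitly, since otherwise the two-point argument compares $P$ against a distribution the estimator was never asked to handle. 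Third, the two-point inequality yields $\tfrac12\epsilon(\overline{X},\eta)$ rather than $\epsilon(\overline{X},\eta)$; since the proposition is stated without explicit constants and the downstream use in the paper is only up to constant factors, this is harmless, but you should not claim the bound ``as written'' is recovered.
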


To obtain the minimax rates with respect to $\eta$, it is enough to restrict our attention to the one dimensional case. Assume that the distribution of a zero mean random variable $X$ satisfies the $L_4-L_2$ norm equivalence \eqref{eq:momeqv}. That is, for some $\kappa \ge 1$, it holds that $\left(\E X^4\right)^{1/4} \le \kappa \left(\E X^2\right)^{1/2}$. Denote $Y = X^2$ and observe that $Y$ is non-negative. In this case, the estimation of the variance of $X$ can be seen as the estimation of the mean of $Y$. The norm equivalence assumption can be rewritten as
$
    \left(\E Y^2\right)^{1/2} \le \kappa^2\E Y. 
$

\begin{example}[Optimality of the $\sqrt{\eta}$-term when $p = 4$]
\label{ex:sqrteta}
First, a few intermediate random variables are considered before the definition of $Y$. Define
\begin{equation*}
    Y_1=\begin{cases}
    -\frac{1}{\sqrt{\eta}}, &\text{with probability} \  \frac{\eta}{2},\\
    -1, &\text{with probability} \frac{1-\eta}{2},\\
    1, &\text{with probability} \  \frac{1-\eta}{2},\\
    \frac{1}{\sqrt{\eta}}, &\text{with probability} \  \frac{\eta}{2}.
    \end{cases}
\end{equation*}
Clearly $\mathbb{E}Y_1 =0$. Let $\sigma_{Y_1}$ denote the standard deviation of $Y_1$. It holds that
\begin{equation*}
\sigma_{Y_1}=(\mathbb{E}Y_1^2)^{1/2} = \sqrt{2-\eta} \le \sqrt{2}.
\end{equation*}
Assume that $\eta \le 1/4$. We have $Q_{\eta/2}(Y_1) = -1$. Indeed, $\mathbb{P}(Y_1\ge -1) = 1-\frac{\eta}{2}$ and $\mathbb{P}(Y_1\ge 1) = \frac{1}{2}< 1-\frac{\eta}{2}$. We conclude that
\begin{equation*}
\epsilon(Y_1,\eta)\ge \mathbb{E}|Y_1+1|\ind{Y_1\le -1} = \left(\frac{1}{\sqrt{\eta}}-1\right)\frac{\eta}{2} = \frac{\sqrt{\eta}}{2}-\frac{\eta}{2}\ge \frac{1}{4}\sqrt{\eta}.
\end{equation*}
We now consider the random variable $Y_2=\frac{\sigma^2}{\sqrt{2-\eta}}Y_1$, where $\sigma > 0$ is a positive real number. It still holds that $\mathbb{E}Y_2 = 0$, but now, $(\mathbb{E}Y_2^2)^{1/2} = \sigma^2$. By homogeneity, we have 
\[
\epsilon(Y_2,\eta)\ge \frac{\sigma^2\sqrt{\eta}}{4\sqrt{2-\eta}} \ge \frac{\sigma^2\sqrt{\eta}}{4\sqrt{2}}.
\]
Finally, observe that $Y=Y_2 +\|Y_2\|_{\infty}$ is a non-negative random variable and $\epsilon(Y,\eta) = \epsilon(Y_2,\eta)$ since the difference between $Y$ and $Y_2$ is a constant. We conclude by observing that $\E Y = \|Y_2\|_{\infty}$ and $\E Y^2 = \E Y_2^2 + \|Y_2\|^2_{\infty} \le 2\|Y_2\|^2_{\infty}$. Thus, the norm equivalence assumption $\left(\E Y^2\right)^{1/2} \le \kappa^2\E Y$ holds with $\kappa^2 = \sqrt{2}$.
\end{example}

We now present an example that attains the sub-exponential minimax rate in the mean estimation and therefore confirms the optimality of our covariance estimation results in the sub-Gaussian case. Indeed, observe that if $X$ is a sub-Gaussian random variable, then $Y = X^2$ is a sub-exponential random variable (see \cite[Lemma 2.7.6]{Vershynin2016HDP}). Therefore, when making a reduction from covariance estimation to a mean estimation problem as in Example \ref{ex:sqrteta}, we have to analyze the mean estimation problem in the sub-exponential regime. We note that a similar sub-Gaussian construction appears in \cite[Remark before Section 2.1]{lugosi2021robust}\footnote{To make their construction work in the Gaussian case, we changed the definition of $Q$, so that it becomes an $\eta/4$-quantile instead of an $\eta/2$-quantile claimed in \cite{lugosi2021robust}.}.

\begin{example}[Optimality of $\eta\log(1/\eta)$-term in sub-Gaussian covariance estimation]
Let $Y= \min\{1,|g|^2\}\ind{|g|^{2}< Q} + |g|^2\ind{|g|^2\ge Q}$, where $g$ is a standard Gaussian random variable and $Q$ is such that $\mathbb{P}(|g|^2 \ge Q) = \eta/4$. Consequently, we have $\mathbb{P}(|g|\ge \sqrt{Q}) = \eta/4$. It is easy to see that for $\eta$ sufficiently small, it holds  that $Q > 1$. Let us now prove that $Q_{1 - \eta/2}(Y) = 1$. Indeed, since $\Pr(Y \ge 1) = \Pr(|g| \ge 1) > \eta/2$ for small enough $\eta$, then $Q_{1 - \eta/2}(Y) \ge 1$. It is clear that if $Q_{1 - \eta/2}(Y) > 1$, then $Q_{1 - \eta/2}(Y) \ge Q$, which contradicts  $\mathbb{P}(|g|^2 \ge Q) = \eta/4 < \eta/2$.
Thus, we have
\[
\varepsilon(\overline{Y},\eta) \ge \mathbb{E}|\overline{Y}-Q_{1-\eta/2}(\overline{Y})|\ind{\overline{Y}\ge Q_{1-\eta/2}(\overline{Y})} = \mathbb{E}(Y-1)\ind{Y\ge 1}.
\]
This leads to
\begin{align*}
    \mathbb{E}(Y-1)\ind{Y\ge 1} &= \mathbb{E}Y\ind{1 \le Y < Q} + \mathbb{E}Y\ind{Y\ge Q} - \Pr(Y \ge 1)
    \\
    &=\Pr(1 \le |g|^2 < Q) + \mathbb{E}|g|^2\ind{|g|^2\ge Q} - \Pr(|g|^2 \ge 1)
    \\
    &= \mathbb{E}|g|^2\ind{|g|^2\ge Q} - \Pr(|g|^2 \ge Q).
\end{align*}
Using the standard computation (see \cite[Exercise 2.1.4]{Vershynin2016HDP}), we obtain
\[
\mathbb{E}|g|^2\ind{|g|^2\ge Q} = \frac{2}{\sqrt{2\pi}}\sqrt{Q}\exp(-Q/2) + \Pr(|g| \ge \sqrt{Q}).
\]
Therefore, since $\eta/4= \mathbb{P}(|g|\ge \sqrt{Q})$, and by the standard Gaussian integration \cite[Proposition 2.1.2]{Vershynin2016HDP} we have
\[
\varepsilon(\overline{Y},\eta) \ge \frac{2}{\sqrt{2\pi}}\sqrt{Q}\exp(-Q/2) \ge Q\Pr(|g| \ge \sqrt{Q}) = \frac{Q\eta}{4}.
\]
Using the same Gaussian integration formula, we conclude that, in order to get $\Pr(|g| \ge \sqrt{Q}) = \eta/4$, we need to choose $Q\sim \log\left(1/\eta\right)$. This implies that 
\[
\epsilon(\overline{Y},\eta)\ge \frac{Q\eta}{4} \gtrsim \eta \log\left(1/\eta\right).
\]

We claim that the rate $\eta\log\left(1/\eta\right)$ is sharp. To do so, we compute the upper bound with respect to $\eta$ in Theorem \ref{thm:informalmain}. If the original random variable $X$ is a sub-Gaussian, then $\kappa(p)^2 \sim p$ for every $p\ge 1$ (see \cite[Proposition 2.5.2]{Vershynin2016HDP}) and the upper bound scales as $\eta p/\eta^{2/p}$. We choose $p\sim \log\left(1/\eta\right)$ to optimize the fraction and obtain the claimed rate.
\end{example}
Finally, we note that for the Gaussian case, it is possible to achieve a slightly improved dependence on $\eta$ by eliminating the logarithmic factor $\log(1/\eta)$ (as shown in the follow-up work of Minasyan and the second-named author of this paper \cite{minasyan2023statistically}), owing to the rotational invariance property of the Gaussian distribution. As discussed in \cite{minasyan2023statistically}, the estimator needs to be tailored to the Gaussian distribution and therefore it cannot be applied to the more general setup considered in our Theorem \ref{thm:informalmain}.

\paragraph{Acknowledgments.}  The authors would like to thank Afonso Bandeira for several valuable discussions, and Alexander Tsybakov for pointing out the connection with \cite{comminges2021adaptive}. Nikita Zhivotovskiy is funded in part by ETH Foundations of Data Science (ETH-FDS).

\bibliographystyle{abbrv}  
{\footnotesize
\bibliography{mybib}
}
\end{document}